\let\frak\mathfrak
\let\Bbb\mathbb
\def\>{\relax\ifmmode\mskip.666667\thinmuskip\relax\else\kern.111111em\fi}
\def\<{\relax\ifmmode\mskip-.333333\thinmuskip\relax\else\kern-.0555556em\fi}
\def\vsk#1>{\vskip#1\baselineskip}
\def\vv#1>{\vadjust{\vsk#1>}\ignorespaces}
\def\vvn#1>{\vadjust{\nobreak\vsk#1>\nobreak}\ignorespaces}
\let\Medskip\medskip
\def\medskip{\par\Medskip}
\let\Bigskip\bigskip
\def\bigskip{\par\Bigskip}
\let\Maketitle\maketitle
\def\maketitle{\Maketitle\thispagestyle{empty}\let\maketitle\empty}
\newtheorem{thm}{Theorem}[section]
\newtheorem{cor}[thm]{Corollary}
\newtheorem{lem}[thm]{Lemma}
\newtheorem{prop}[thm]{Proposition}
\numberwithin{equation}{section}
\theoremstyle{definition}
\newtheorem*{rem}{Remark}
\let\mc\mathcal
\let\nc\newcommand
\nc{\on}{\operatorname}
\nc{\Z}{{\mathbb Z}}
\nc{\C}{{\mathbb C}}
\nc{\N}{{\mathbb N}}
\nc{\pone}{{\mathbb C}{\mathbb P}^1}
\nc{\arr}{\rightarrow}
\nc{\larr}{\longrightarrow}
\nc{\al}{\alpha}
\nc{\W}{{\mc W}}
\nc{\la}{\lambda}
\nc{\su}{\widehat{{\mathfrak sl}}_2}
\nc{\g}{{\mathfrak g}}
\nc{\h}{{\mathfrak h}}
\nc{\m}{{\mathfrak m}}
\nc{\n}{{\mathfrak n}}
\nc{\Gm}{\Gamma}
\nc{\La}{\Lambda}
\nc{\gl}{\widehat{\mathfrak{gl}_2}}
\nc{\bi}{\bibitem}
\nc{\om}{\omega}
\nc{\Res}{\on{Res}}
\nc{\gm}{\gamma}
\nc{\Om}{\Omega}
\nc{\yy}{{\bs y}}
\nc{\kk}{{\bs k}}
\nc{\Tau}{\mathrm{T}}
\def\z{\mathfrak z}
\def\zA{\z(A_{2n}^{(1)})}
\def\zAA{\z(A_{2n}^{(2)})}
\def\Res{\on{Res}}
\def\Wr{\on{Wr}}
\def\B{{\mc B}}
\def\D{{\mc D}}
\def\F{{\mc F}}
\def\L{{\mc L}}
\def\M{{\mc M}}
\def\V{{\mc V}}
\let\Dl\Delta
\let\si\sigma
\let\Si\Sigma
\let\der\partial
\let\minus\setminus
\let\ge\geqslant
\let\geq\geqslant
\let\leq\leqslant
\nc{\gln}{\mathfrak{gl}_N}
\nc{\sln}{\mathfrak{sl}_N}
\def\beq{\begin{equation}}
\def\eeq{\end{equation}}
\def\be{\begin{equation*}}
\def\ee{\end{equation*}}
\nc{\bean}{\begin{eqnarray}}
\nc{\eean}{\end{eqnarray}}
\nc{\bea}{\begin{eqnarray*}}
	\nc{\eea}{\end{eqnarray*}}
\nc{\bs}{\boldsymbol}
\nc{\Ref}[1]{{\rm(\ref{#1})}}
\nc{\R}{\Bbb R}
\nc{\glN}{\mathfrak{gl}_N}
\nc{\glNt}{\mathfrak{gl}_N[t]}
\nc{\s}{sing}
\nc{\Oml}{{\Om_{\bs\la}}}
\nc{\OmLb}{{\Om_{\bs\La,\bs\la,\bs b}}}
\nc{\Ol}{{\mc O_{\bs\la}}}
\nc{\OLb}{{\mc O_{\bs\La,\bs\la,\bs b}}}
\nc{\Ml}{{\mc M_{\bs\la}}}
\nc{\Mlb}{{\mc M_{\bs\La,\bs\la,\bs b}}}
\nc{\Blb}{{\B_{\bs\La,\bs\la,\bs b}}}
\nc{\Omn}{{\Omega_{\bs n,\bs b,\bs K}}}
\nc{\Omlb}{{\bar\Om_{\bs\la}}}
\nc{\VSl}{{(\V^S)_{\bs\la}}}
\nc{\Dlb}{\Dl_{\bs\La,\bs\la,\bs b,\bs K}}
\nc{\ep}{\epsilon}
\nc{\Vn}{{V^{\otimes n}}}
\nc{\Il}{{\mc I_{\bs\la}}}
\nc{\bla}{{\bs\la}}
\nc{\Fla}{\F_{\bs\la}}
\nc{\GL}{{GL_n(\C)}}
\nc{\ga}{\gamma}
\nc{\Ga}{\Gamma}
\def\Bb{{\mc B}}
\nc{\Nn}{{\mc N}}
\nc\Ll{{\mc L}}
\def\Wr{\on{Wr}}
\nc{\PCN}{{   (\C[x])^2   }}
\nc{\slt}{{\frak{sl}_{2n+1}}}
\nc\ad{{\on{ad}}}
\nc\gA{{\g(A_{2n}^{(1)})}}
\nc{\gAA}{{\g(A_{2n}^{(2)})}}
\nc\At{{A_{2n}^{(1)}}}
\nc\Dia{{\on{Diag}}}
\nc\AT{{A^{(2)}_{2n}}}
\nc\ox{{\otimes}}
\begin{document}
	
	\hrule width0pt
	\vsk->

	\title[ Critical points and mKdV hierarchy of type   $A^{(2)}_{2n}$]
{Critical points of master functions and mKdV hierarchy of type    $A^{(2)}_{2n}$}

\author[Alexander Varchenko,  Tyler Woodruff]
{Alexander Varchenko$^\star$, Tyler Woodruff$^\diamond$}

	\maketitle

	\begin{center}
		{\it Department of Mathematics, University of North Carolina
			at Chapel Hill\\ Chapel Hill, NC 27599-3250, USA\/}
	\end{center}

	{\let\thefootnote\relax
		\footnotetext{\vsk-.8>\noindent
			$^\star$ E-mail: anv@email.unc.edu, supported in part by NSF grant DMS-1362924 and Simons Foundation
\\  
grant   \#336826
\\			$^\diamond$ E-mail: tykwood@gmail.com}}
	
	\medskip
	
	\begin{abstract}
We consider the population of critical points
generated from the  critical point of the master function with no variables, which is
associated with the trivial representation of the twisted affine Lie algebra $\AT$.
The population is naturally partitioned into an infinite collection of complex cells $\C^m$, where $m$ are some positive integers.
For each  cell we define an injective rational map $\C^m \to \mc M(\AT)$ of the cell to the space $\mc M(\AT)$
of Miura opers of type $\AT$. We show that the image of the map is invariant with respect 
to all mKdV flows on $\mc M(\AT)$ and the image is point-wise fixed by all  mKdV
flows $\frac\der{\der t_r}$ with index $r$ greater than $4m$.

\end{abstract}
	
   {\small \tableofcontents	}

	\setcounter{footnote}{0}
	\renewcommand{\thefootnote}{\arabic{footnote}}

	\section{Introduction}

Let $\g$ be a Kac-Moody algebra with invariant scalar product $(\,,\,)$,
 $\h\subset \g$ Cartan subalgebra,
  $\al_0,\dots,\al_n$ simple roots. Let
$\Lambda_1,\dots,\Lambda_N$ be dominant integral weights,
 $k_0,\dots, k_n$
nonnegative integers, $k=k_0+\dots+k_n$.

Consider $\C^N$ with coordinates $z=(z_1,\dots,z_N)$.
Consider $\C^k$ with coordinates $u$ collected into $n+1$ groups, the $j$-th group consisting of $k_j$ variables,
\bea
u=(u^{(0)},\dots,u^{(n)}),
\qquad
u^{(j)} = (u^{(j)}_1,\dots,u^{(j)}_{k_j}).
\eea
The {\it master function} is the multivalued function on $\C^k\times\C^N$ defined by the formula
\bean
\label{mmff}
&&
\Phi(u,z) = \sum_{a<b} (\La_a,\La_b) \ln (z_a-z_b)
-  \sum_{a,i,j} (\al_j,\La_a)\ln (u^{(j)}_i-z_a) +
\\
\notag
&&
+ \sum_{j< j'} \sum_{i,i'} (\al_j,\al_{j'})
\ln (u^{(j)}_i-u^{(j')}_{i'})
+  \sum_{j} \sum_{i<i'} (\al_j,\al_{j})
\ln (u^{(j)}_i-u^{(j)}_{i'}),
\eean
with singularities at the places where the arguments of the logarithms are equal to zero.

A point in $\C^k\times \C^N$ can be interpreted as a collection of particles in $\C$:\ $z_a, u^{(j)}_i$.
A particle $z_a$ has weight $\La_a$, a particle $u^{(j)}_i$ has weight $-\al_j$.
The particles interact pairwise. The interaction of two particles is determined by
the scalar product of their weights.
The master function is the "total energy" of the collection of particles.

Notice that all scalar products are integers. So the master function is the logarithm
of a rational function. From a "physical" point of view, all interactions are integer multiples of
a certain unit of measurement. This is important for what will follow.

The variables $u$ are the {\it true} variables, variables $z$ are {\it parameters}.
We may think that the positions of $z$-particles are fixed and the $u$-particles can
move.

There are "global" characteristics of this situation,
\bea
I(z,\kappa) = \int e^{\Phi(u,z)/\kappa} A(u,z) du ,
\eea
where $A(u,z)$ is a suitable density function, $\kappa$ a parameter,
and
there are "local" characteristics  -- critical points
of the master function with respect to the $u$-variables,
\bea
d_u \Phi(u,z)=0 .
\eea
A critical point is an equilibrium position of the
$u$-particles  for fixed positions of the $z$-particles. In this paper we are interested in the equilibrium positions of
the $u$-particles.

Examples of master functions associated with  $\g=\frak{sl}_2$
were considered by Stieltjes and Heine in 19th century, see for example \cite{Sz}.
Master functions we introduced in \cite{SV}
to construct integral representations for solutions of the KZ equations, see also \cite{V1, V2}.

The critical points of master functions with respect to the $u$-variables were used  to
find eigenvectors in the associated Gaudin models by the Bethe ansatz method,
see   \cite{BF, RV, V3}. In important cases the algebra of functions on the critical set of a
master function is closely related to Schubert calculus, see \cite{MTV}.

In \cite{ScV, MV1} it was observed that the critical points of master functions with respect to the $u$-variables
can be deformed and
form families. Having one critical point, one can construct a family of new critical points. The family is
called a population of critical points. A point of the population is a critical point of the same master function
or of another master function associated with the same $\g,\La_1,\dots,\La_N$ but with  different integer parameters
$k_0,\dots,k_n$. The population is a variety isomorphic to the flag variety of the Kac-Moody algebra $\g^t$ Langlands dual
to $\g$, see \cite{MV1, MV2, F}.

In \cite{VW}, it was discovered that the population originated from the critical point of the master function associated
with the affine Lie algebra $\widehat{\frak{sl}}_{n+1}$ and the parameters $N=0, k_0=\dots=k_n=0$ is connected with the mKdV
integrable hierarchy associated with $\widehat{\frak{sl}}_{n+1}$. Namely, that population can be naturally embedded into the
space of $\widehat{\frak{sl}}_{n+1}$ Miura opers so that the image of the embedding is invariant with respect
to all mKdV flows on the space of Miura opers. For $n=1$, that result follows from  the classical paper by M.\,Adler and J.\,Moser \cite{AM},
which served as a motivation for \cite{VW}.

In this  paper we prove the analogous statement  for the twisted affine Lie algebra $\AT$. The special case  $A^{(2)}_2$ was considered in
\cite{VWW}.

\smallskip
In Sections \ref{sec KM1} - \ref{sec MKDV} we follow the paper \cite{DS} by V.\,Drinfled and V.\,Sokolov.  We
 review the affine Lie algebras $A^{(1)}_{2n}$ and $\AT$, the associated mKdV and KdV hierarchies,  Miura maps. 
For example, the $\AT$ mKdV hierachy is a sequence of commuting flows on the infinite-dimensional space
$ \mc M(\AT)$ of the $\AT$ Miura opers.

In Section \ref{tmaps} we study tangent maps to Miura maps.
In Section \ref{sec gene} formula  \Ref{Master}, we introduce our master functions,
\bean
\label{mmmfff}
&& \Phi(u^{(0)}_1,\dots,u^{(0)}_{k_0},\dots, u^{(n)}_1,\dots, u^{(n)}_{k_n})
\\
&&
\notag
\phantom{aa}
=
		-4 \sum_{i,i'}
		\ln (u^{(n-1)}_i-u^{(n)}_{i'})-
		2\sum_{j=0}^{n-2} \sum_{i,i'} 
		\ln (u^{(j)}_i-u^{(j+1)}_{i'})
\\
&&
\notag
\phantom{aaaa}
 +  8\sum_{i<i'} 
		\ln (u^{(n)}_i-u^{(n)}_{i'})+4\sum_{j=1}^{n-1} \sum_{i<i'} 
		\ln (u^{(j)}_i-u^{(j)}_{i'})
		+2\sum_{i<i'} \ln (u^{(0)}_i-u^{(0)}_{i'}).		
\eean
This master function is the special case of the master function in \Ref{mmff}. The master function in \Ref{mmmfff} is defined by formula
\Ref{mmff} if $\g$ is the Langlands dual to $\AT$ and  $N=0$, see a remark in Section \ref{mM}.

 Following \cite{MV1, MV2, VW}, we describe
the generation procedure of new critical points starting from a given one. We define the population of critical points
generated from the critical point of the function with no variables, namely, the function corresponding to the parameters 
$k_0=\dots = k_n=0$.
That population is partitioned into complex cells $\C^m$ labeled by degree increasing sequences $J=(j_1,\dots,i_m)$,
see the definition in Section \ref{sec degree transf}. 

In Theorem \ref{sec degree transf} we deduce from \cite{MV3}
that every critical point of the master function in \Ref{mmmfff} with arbitrary parameters $k_0,\dots,k_n$  belongs a cell of our population.
Moreover, a function in \Ref{mmmfff} with some parameters $k_0,\dots,k_n$ either does not have critical points at all or its critical points form
a cell $\C^m$ corresponding to a degree increasing sequence.

In Section \ref{sec cr and Miu}, to every degree increasing sequence $J$ we assign a rational injective
map $\mu^J : \C^m \to \mc M(\AT)$ of the cell corresponding to $J$ to the space
$\mc M(\AT)$ of Miura opers of type $\AT$. We describe  properties of that map.

In Section \ref{sec Vector fields}, we formulate and prove our main result. Theorem \ref{thm main} says that for any degree increasing  sequence,
the variety $\mu^J(\C^m)$ is invariant with respect to all mKdV flows on $\mc M(\AT)$ and that variety is point-wise fixed by all
flows $\frac\der{\der t_r}$ with index $r$ greater than $4m$.

Our result shows that there is a deep interrelation between the critical set of the master functions of the form \Ref{mmmfff}
and rational finite-dimensional 
submanifolds of the space
$\mc M(\AT)$, invariant with respect to all flows of the $\AT$ mKdV hierarchy.

 Initially the critical points of the master functions were related to
quantum integrable systems of the Gaudin type through the Bethe ansatz, \cite{SV, BF, RV, V3}.  Our result shows
 that the critical points are also related to the classical integrable systems, namely, the mKdV hierarchies.

In the next papers we plan to extend this result to other affine Lie algebras.

\smallskip
The first author thanks  MPI in Bonn for hospitality in 2015-2016.

	\section{Kac-Moody algebra of type $A_{2n}^{(1)}$}
	\label{sec KM1}
	In this section we follow \cite[Section 5]{DS}.
	
	\subsection{Definition}

	For $n\geq 2$, consider the $(2n+1) \times (2n+1) $ Cartan matrix of type  $A_{2n}^{(1)}$,
	\bea
	A_{2n}^{(1)}= \left(
	\begin{matrix}
		a_{0,0} & a_{0,1}  & \dots & a_{0,2n} \\
		\dots  & \dots  & \dots  & \dots \\
		\dots  & \dots  & \dots  & \dots \\
		a_{2n,0}  & a_{2n,1}  & \dots & a_{2n,2n}
	\end{matrix}
	\right)=\left(
	\begin{matrix}
		2 & -1  & 0 & \dots & 0 &  -1\\
		-1  & 2  & -1& \dots & 0 & 0 \\
		0 & -1  & 2 & \dots & \dots & \dots \\
		\dots & \dots & \dots & \dots & \dots & \dots \\
		0 & 0 & 0 &\dots &2  &-1  \\
		-1 & 0 &0 & \dots &-1 & 2 \\
	\end{matrix}
	\right).
	\eea
	The Kac-Moody algebra $\gA$ {\it of type} $A_{2n}^{(1)}$ is the Lie algebra with {\it canonical generators} $E_i,H_i,F_i\in\gA, i=0, \dots, 2n$,
	subject to the relations:
	\begin{eqnarray*}
	&
	[E_i,F_j]=\delta_{i,j}H_i,
	\\
	&
	[H_i, E_j] = a_{i,j} E_j , 
	\qquad
	[H_i,  F_j] = - a_{i,j} F_j,
	\qquad
	(\on{ad} E_i)^{1-a_{i,j}} E_j=0,
	\\
	&
	(\on{ad}  F_i)^{1-a_{i,j}} F_j=0,
	\qquad
	[H_i,H_j] = 0,
	\qquad
	\sum_{i=0}^{2n}H_{i}=0,
	\end{eqnarray*}
	see \cite[Section 5]{DS}.
	The Lie algebra $\gA$ is graded with respect to the {\it standard grading}, $\deg E_i=1, \deg F_i=-1$,\ $ i=0, \dots,2n $.
	Let $\gA^j=\{x\in\g(A_{2n}^{(1)})\ |\ \deg x=j\}$, then $\gA = \oplus_{j\in\Z}\,\gA^j$.
	
	Notice that $\g(A_{2n}^{(1)})^0$ is the $2n$-dimensional space generated by the $H_i$.
	Denote $\h=\g(A_{2n}^{(1)})^0$. Introduce elements $\al_j$ of the dual space $\h^*$ by the conditions
	$\langle \al_j, H_i\rangle =a_{i,j}$ for $i,j=0,\dots,2n$.
	For $j=0,1,\dots 2n$, we denote by $\n_j^-\subset \gA$ the Lie subalgebra generated by $F_i$, $i\in\{0,1,\dots, 2n\},\, i\ne j$.
	For example, $\n^-_0$ is generated by $F_1,F_2,\dots, F_{2n}$.

	\subsection{Realizations of $\gA$}
	\label{Realizations of gA}

	Consider the complex Lie algebra $\frak{sl}_{2n+1}$ with standard basis $e_{i,j}$,\ $i,j=1, \dots,2n+1$.
	
	Let $w=e^{2\pi i/{(2n+1)}}$.
	Define the {\it Coxeter automorphism} $C : \frak{sl}_{2n+1}\to \frak{sl}_{2n+1} $  of order $2n+1$ by the formula
	\bea
	C(X) = SXS^{-1},\ \ S = \on{diag}(1,w,\dots, w^{2n}).
	\eea
	Denote $(\frak{sl}_{2n+1})_j=\{ x \in \frak{sl}_{2n+1}\ | \ Cx=w^jx\}$.
	The twisted Lie subalgebra $L(\frak{sl}_{2n+1}, C)\subset\frak{sl}_{2n+1}[\xi,\xi^{-1}]$ is the subalgebra
	\bea
	L(\frak{sl}_{2n+1}, C) = \oplus_{j \in \mathbb{Z}}\, \xi^{j} \otimes (\frak{sl}_{2n+1})_{j\, \on{mod}\, 2n +1} .
	\eea
	The isomorphism $\tau_C: \gA \to L(\frak{sl}_{2n+1}, C)$ is defined by the formula, for $i = 1,\dots, 2n$
	\bea
	&&E_0\mapsto \xi\otimes e_{1,2n+1},
	\qquad
	E_i\mapsto \xi\otimes e_{i+1,i},
	\\
	&&
	F_0\mapsto \xi^{-1}\otimes e_{2n+1,1},
	\qquad
	F_i\mapsto \xi^{-1}\otimes e_{i,i+1},
	\\
	&&
	H_0\mapsto 1\otimes (e_{1,1}-e_{2n+1,2n+1}),
	\qquad
	H_i\mapsto 1\otimes (-e_{i,i}+e_{i+1,i+1}).
	\eea
	Under this isomorphism we have $\gA^j=\xi^j\otimes (\slt)_j$.
	
	\medskip
	
	The {\it standard automorphism} $\sigma_0 :\frak{sl}_{2n+1}\to \frak{sl}_{2n+1} $  is the identity: $\sigma_{0}(X) = X$.
	The isomorphism $\tau_0: \gA \to L(\frak{sl}_{2n+1}, \si_0)$ is defined by the formula, for $i = 1,\dots, 2n$
	\bea
	&&
	E_0\mapsto \la \otimes e_{1,2n+1},
	\qquad
	E_i\mapsto 1 \otimes e_{i+1,i},
	\\
	&&
	F_0\mapsto \lambda^{-1}\otimes e_{2n+1,1},
	\qquad
	F_i\mapsto1 \otimes e_{i,i+1},
	\\
	&&
	H_0\mapsto 1\otimes (e_{1,1}-e_{2n+1,2n+1}),
	\qquad
	H_i\mapsto 1\otimes (-e_{i,i}+e_{i+1,i+1}).
	\eea
	
	\subsection{Element $\La^{(1)}$}
	Denote by $\La^{(1)}$ the element $\sum_{j=0}^{2n}{E_{j}}\in \gA$. 
	Then $\zA=\{x\in\gA\ | \ [\La^{(1)},x]=0\}$ is an Abelian Lie subalgebra of $\g(A_{2n}^{(1)})$. Denote $\zA^j = \zA \cap \gA^j$, then
	$\zA =\oplus_{j\in\Z}\,\zA^j $. 
	We have $\dim \zA^j =1$ if $j \neq 0$ mod $2n+1$ and $\dim \zA^j=0$ otherwise.
	
	Let $\gA$ be realized as $L(\slt,C)$ and written out as $(2n+1) \times (2n+1)$-matrices. For $m\in \Z$ and $1 \leq j < 2n+1$ introduce the element
	\bea
	A_{(2n+1)m+j}=\xi^{(2n+1)m+j} \otimes \left(\begin{matrix}
		0 & I_{j} \\
		I_{2n+1-j} & 0 \\
	\end{matrix}\right) \quad \in \quad L(\slt,C),
	\eea
	where $I_{j}$ is the $j \times j$ identity matrix. We have $A_{(2n+1)m+j} = (A_1)^{(2n+1)m+j}$.
	
	If $\gA$ is realized as $L(\slt,\si_{0})$, introduce the element
	\bea
	B_{(2n+1)m+j}= \left(\begin{matrix}
		0 & \la^{m+1} \otimes I_{j} \\
		\la^{m}\otimes I_{2n+1-j} & 0 \\
	\end{matrix}\right) \quad \in\quad L(\slt,\si_{0}).
	\eea
	We have $B_{(2n+1)m+j} = (B_1)^{(2n+1)m+j}$.

	\begin{lem}
		For any $m\in \Z$, $1 \leq j < 2n+1$, the elements $		(\tau_C)^{-1}(A_{(2n+1)m+j})$,
\\		$		(\tau_0)^{-1}(B_{(2n+1)m+j})$  		of $\zA^{(2n+1)m+j}$
		are equal.
		\qed
	\end{lem}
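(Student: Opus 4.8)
\emph{Proof sketch.}
The plan is the following. Since $r=(2n+1)m+j$ with $1\le j<2n+1$, we have $r\not\equiv 0$ mod $2n+1$, so $\dim\zA^{r}=1$. Both $(\tau_C)^{-1}(A_{r})$ and $(\tau_0)^{-1}(B_{r})$ are nonzero elements of $\zA^{r}$: indeed $A_{r}=(A_1)^{r}$ commutes with $A_1=\tau_C(\La^{(1)})$ and lies in the degree $r$ summand $\xi^{r}\otimes(\slt)_{r\,\on{mod}\,2n+1}$ of $L(\slt,C)$, and similarly for $B_{r}$ with $B_1=\tau_0(\La^{(1)})$. Hence it suffices to produce a single Lie algebra isomorphism $\Psi\colon L(\slt,C)\to L(\slt,\si_0)$ that coincides with $\tau_0\circ(\tau_C)^{-1}$ and to check that $\Psi(A_{r})=B_{r}$; applying $(\tau_0)^{-1}$ to this identity then gives the lemma.

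For $\Psi$ I would use a rescaling-and-conjugation map. Fix $\zeta$ with $\zeta^{2n+1}=\la$, put $D=\on{diag}(1,\zeta^{-1},\dots,\zeta^{-2n})$, and set $\Psi(\xi^{k}\otimes X)=\zeta^{k}\otimes DXD^{-1}$ for $X\in(\slt)_{k\,\on{mod}\,2n+1}$. Since $De_{p,q}D^{-1}=\zeta^{q-p}e_{p,q}$ and the condition $e_{p,q}\in(\slt)_{k}$ means $p-q\equiv k$ mod $2n+1$, the number $k+(q-p)$ is divisible by $2n+1$, so $\Psi(\xi^{k}\otimes e_{p,q})=\la^{(k+q-p)/(2n+1)}\otimes e_{p,q}$; thus $\Psi$ genuinely sends $L(\slt,C)$ into $\slt[\la,\la^{-1}]=L(\slt,\si_0)$, does not depend on the choice of root $\zeta$, and is plainly a bijection, hence a Lie algebra isomorphism. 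Comparing with the defining formulas of $\tau_C$ and $\tau_0$ one verifies $\Psi\circ\tau_C=\tau_0$ on the canonical generators of $\gA$, e.g.\ $\Psi(\xi\otimes e_{1,2n+1})=\zeta^{2n+1}\otimes e_{1,2n+1}=\la\otimes e_{1,2n+1}$ and $\Psi(\xi\otimes e_{i+1,i})=\zeta^{0}\otimes e_{i+1,i}=1\otimes e_{i+1,i}$, with the $H_i$ fixed by conjugation by $D$; since the $E_i,F_i,H_i$ generate $\gA$ this forces $\Psi=\tau_0\circ(\tau_C)^{-1}$.

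It remains to evaluate $\Psi$ on $A_{r}=\xi^{r}\otimes N_j$, where $N_j=\left(\begin{matrix}0&I_j\\ I_{2n+1-j}&0\end{matrix}\right)$ and $r=(2n+1)m+j$. Conjugation by $D$ multiplies the entries of the block $I_j$ (the matrix units $e_{p,q}$ with $q-p=2n+1-j$) by $\zeta^{2n+1-j}$ and those of the block $I_{2n+1-j}$ (with $q-p=-j$) by $\zeta^{-j}$, so $DN_jD^{-1}=\zeta^{-j}\left(\begin{matrix}0&\la I_j\\ I_{2n+1-j}&0\end{matrix}\right)$, and since $\zeta^{r-j}=\zeta^{(2n+1)m}=\la^{m}$ we obtain
\be
\Psi(A_{r})=\zeta^{r-j}\left(\begin{matrix}0&\la I_j\\ I_{2n+1-j}&0\end{matrix}\right)=\left(\begin{matrix}0&\la^{m+1}\otimes I_j\\ \la^{m}\otimes I_{2n+1-j}&0\end{matrix}\right)=B_{r},
\ee
which is the asserted equality. (More conceptually, the same recipe is the restriction of an associative algebra isomorphism of the twisted loop algebras of $\frak{gl}_{2n+1}$, hence multiplicative, and it sends $A_{r}=(A_1)^{r}$ to $\Psi(A_1)^{r}=(B_1)^{r}=B_{r}$ directly.) I do not expect a serious obstacle here; the one step requiring genuine care is the well-definedness of $\Psi$ --- that it takes values in $L(\slt,\si_0)$ and not back in $L(\slt,C)$, and is independent of the chosen root $\zeta$ --- which is exactly the place where the finite grading $\slt=\oplus_k(\slt)_k$ is used. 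Everything else is bookkeeping with powers of $\zeta$.
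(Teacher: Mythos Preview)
Your argument is correct. In the paper the lemma is stated with a bare \qed\ and no proof, so there is nothing to compare against; your construction of the explicit isomorphism $\Psi=\tau_0\circ(\tau_C)^{-1}$ via the substitution $\xi\mapsto\zeta$ together with conjugation by $D=\on{diag}(1,\zeta^{-1},\dots,\zeta^{-2n})$ supplies exactly the details the paper omits. The parenthetical remark at the end --- that $\Psi$ extends to an associative-algebra isomorphism of the twisted $\frak{gl}_{2n+1}$ loop algebras and therefore sends $A_r=(A_1)^r$ to $(B_1)^r=B_r$ directly --- is in fact the cleanest route and makes the block-by-block computation of $DN_jD^{-1}$ unnecessary.
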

	
Denote by $\La^{(1)}_{(2n+1)m+j}$ the  elements $(\tau_C)^{-1}(A_{(2n+1)m+j})$ and $(\tau_0)^{-1}(B_{(2n+1)m+j})$ of $\gA$.
	Notice that $\La^{(1)}_1=\sum_{i=0}^{2n}{E_{i}}=\La^{(1)}$.
	For any $m\in\Z,1\leq j< 2n+1,$  the element  $\La^{(1)}_{(2n+1)m+j}$  generates $\zA^{(2n+1)m+j} $.
	
	\medskip
	Let $T = \sum_{j=-\infty}^m T_j$ be a formal series with $T_j \in  \gA^j$.
	Denote $T^+ = \sum_{j=0}^n T_j,$\ {}\  $ T^- = \sum_{j<0} T_j$.
	Let $\gA$ be realized as $ \slt[\la,\la^{-1}]$. Consider $\La^{(1)}= B_{1}$ as a
	$(2n+1)\times (2n+1)$ matrix depending on the parameter $\la$.
	By \cite[Lemma 3.4]{DS}, we may represent $T$ uniquely in the form $T = \sum_{j=-\infty}^k b_j\,(\La^{(1)})^j$,\  $b_j\in \Dia$, where
	$\Dia\subset\frak{gl}_{2n+1}$ is the space of diagonal $(2n+1)\times (2n+1)$ matrices.
	Denote $(T)_{\La^{(1)}}^+ = \sum_{j=0}^k b_j\,(\La^{(1)})^j,$\ {}\  $ (T)_{\La^{(1)}}^- = \sum_{j<0} b_j\,(\La^{(1)})^j$.
	
	\begin{lem}
		We have $(T)_{\La^{(1)}}^+= T^+$, $(T)_{\La^{(1)}}^-=T^-$, $b_0=T^0$.
	\end{lem}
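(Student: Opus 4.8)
The plan is to recognize that the $\La^{(1)}$-adic expansion $T=\sum_j b_j(\La^{(1)})^j$ supplied by \cite[Lemma 3.4]{DS} is \emph{automatically} the decomposition of $T$ into standard-homogeneous components, and then simply to read off the three assertions. So the real content is to see that each term $b_j(\La^{(1)})^j$ is homogeneous of standard degree $j$; once that is known, comparison with $T=\sum_j T_j$ forces $T_j=b_j(\La^{(1)})^j$ and everything follows.

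First I would make the standard grading visible on the matrix side. Realizing $\gA=L(\slt,\si_0)\subset\slt[\la,\la^{-1}]$, extend the standard grading to a (suitably completed) grading of the associative algebra $\mathfrak{gl}_{2n+1}[\la,\la^{-1}]$ by declaring $\la^p\otimes e_{i,k}$ to have degree $(2n+1)p+(i-k)$. This is indeed an associative grading: the product $(\la^p\otimes e_{i,k})(\la^q\otimes e_{l,r})=\delta_{k,l}\,\la^{p+q}\otimes e_{i,r}$ has degree $(2n+1)(p+q)+(i-r)$, which is the sum of the two degrees. From the defining formulas for $\tau_0$ one checks that $\tau_0(E_i)$, $\tau_0(F_i)$, $\tau_0(H_i)$ sit in degrees $1$, $-1$, $0$ respectively, so the restriction of this grading to $\gA$ is exactly the standard grading, and in particular powers and products of standard-homogeneous matrices are again homogeneous with degrees adding.

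Next, the two kinds of factors occurring in the expansion are homogeneous for this grading. Since $\La^{(1)}=\La^{(1)}_1=\sum_{i=0}^{2n}E_i$ lies in $\gA^1$, the power $(\La^{(1)})^j$ is homogeneous of degree $j$; equivalently, for $j=(2n+1)m+s$ with $1\le s<2n+1$ one has $(\La^{(1)})^j=\La^{(1)}_j\in\zA^j\subset\gA^j$ by the discussion preceding the lemma, and $(\La^{(1)})^{(2n+1)m}=\la^m I$ visibly has degree $(2n+1)m$. A matrix $b_j\in\Dia$, being of the form $\sum_i c_i e_{i,i}$, has degree $(2n+1)\cdot 0+(i-i)=0$. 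Hence each $b_j(\La^{(1)})^j$ is homogeneous of standard degree $j$.

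Therefore $T=\sum_j b_j(\La^{(1)})^j$ exhibits $T$ as a (formal) sum of standard-homogeneous terms, the $j$-th having degree $j$; comparing with $T=\sum_j T_j$, $T_j\in\gA^j$, and using uniqueness of homogeneous components gives $T_j=b_j(\La^{(1)})^j$ for every $j$. Summing over $j\ge 0$ and over $j<0$ yields $(T)_{\La^{(1)}}^+=T^+$ and $(T)_{\La^{(1)}}^-=T^-$, while the $j=0$ term gives $b_0=b_0(\La^{(1)})^0=T_0=T^0$. The only step that needs any care is the first one — pinning down the standard grading as the restriction of an honest associative grading on the matrix algebra, so that $(\La^{(1)})^j$ and $b_j(\La^{(1)})^j$ behave as expected — after which the argument is a one-line uniqueness statement, with no induction or estimates involved.
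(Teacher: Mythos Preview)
Your proof is correct and rests on the same idea as the paper's: the term $b_j(\La^{(1)})^j$ is homogeneous of standard degree $j$, so the $\La^{(1)}$-adic expansion is already the decomposition into graded pieces. The paper establishes this by transporting everything through the isomorphism $\iota:\slt[\la,\la^{-1}]\to L(\slt,C)$, $\la^m\otimes e_{k,l}\mapsto\xi^{(2n+1)m+k-l}\otimes e_{k,l}$, and then checking by hand for $j=0,\pm1$ (and saying ``similarly'' for general $j$) that $\iota(b_j(\La^{(1)})^j)$ lands in $\xi^j\otimes(\slt)_j$. Your route is to observe that the assignment $\deg(\la^p\otimes e_{i,k})=(2n+1)p+(i-k)$ is an associative grading on $\mathfrak{gl}_{2n+1}[\la,\la^{-1}]$ that restricts to the standard one on $\gA$; since $\La^{(1)}$ has degree $1$ and $b_j\in\Dia$ has degree $0$, multiplicativity gives the result for all $j$ at once. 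This is exactly the grading that $\iota$ pulls back from the $\xi$-grading, so the two arguments are the same in content; yours packages it more cleanly and avoids the case-by-case verification.
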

	\begin{proof}
		The isomorphism $ \iota : \slt[\la,\la^{-1}] \to L(\frak{sl}_{2n+1}, C^{(1)})$ is given by the formula
		$\la^m\otimes e_{k,l}\mapsto \xi^{(2n+1)m+k-l}\ox e_{k,l}$.
		We have $\iota(b_0) = \iota(1\ox(b_0^1 e_{1,1}+ \dots +b_0^{2n+1} e_{2n+1,2n+1}))=
1\otimes (b_0^1 e_{1,1}+\dots +b_0^{2n+1} e_{2n+1,2n+1})\in \gA^0$,
		$\iota(b_1\La^{(1)}) = \iota((b_1^1 e_{1,1}+ b_1^2 e_{2,2}+\dots +b_1^{2n+1}
 e_{2n+1,2n+1})(e_{2,1}+\dots + e_{2n+1,2n}+\la e_{1,2n+1}))
		=\iota(b_1^1 \la e_{1,2n+1}+b_1^2 e_{2,1}+\dots +b_1^{2n+1} e_{2n+1,2n})=\xi\otimes (b_1^1 e_{1,2n+1}+ b_1^2 e_{2,1}+\dots +b_1^{2n+1} e_{2n+1,2n})\in\gA^1$,
		$\iota(b_{-1}(\La^{(1)})^{-1}) = \iota((b_{-1}^1 e_{1,1}+ b_{-1}^2 e_{2,2}+\dots+b_{-1}^{2n+1} e_{2n+1,2n+1})(e_{1,2}+\dots +e_{2n,2n+1}+\la^{-1} e_{2n+1,1}))
		=\iota(b_{-1}^1 e_{1,2}+ \dots + b_{-1}^{2n} e_{2n,2n+1}+b_{-1}^{2n+1} \la^{-1}e_{2n+1,1})=\xi^{-1}\otimes (b_{-1}^1 e_{1,2}+\dots + b_{-1}^{2n} e_{2n,2n+1}+b_{-1}^{2n+1} e_{2n+1,1})\in\gA^{-1}$.
		Similarly one checks that $\iota(b_j\,(\La^{(1)})^j)\in\gA^j$ for any $j$.
	\end{proof}

We have ${(\La^{(1)})}^{-1}= \sum_{i=1}^{2n}e_{i,i+1}+\la^{-1}e_{2n+1,1}$,
	\bea
	E_{0}= \La^{(1)}e_{2n+1,2n+1}, \qquad  E_{i} = \La^{(1)}e_{i,i},\qquad
	F_{0}=e_{2n+1,2n+1} {(\La^{(1)})}^{-1},\qquad 
F_{i} = e_{i,i}{(\La^{(1)})}^{-1},
	\eea
for $i = 1, \dots, 2n$.

	\begin{lem}
		\label{lem exp}
Consider the elements $F_0,F_{i}+F_{2n+1-i}, 2(F_n+F_{n+1})$ for $i = 1, \dots, n-1$ as  $(2n+1)\times (2n+1)$ matrices.
		Let $g \in \C$. Then
		\bean
		\label{formula exp}
		&&
		e^{gF_0} = 1 + ge_{2n+1,2n+1}(\La^{(1)})^{-1},
		\\
		&&
\notag
		e^{g(F_i+F_{2n+1-i})} = 1 + g(e_{i,i}+e_{2n+1-i,2n+1-i})(\La^{(1)})^{-1},
		\\
		&&
\notag
		e^{g2(F_n+F_{n+1})} = 1 + 2g(e_{n,n}+e_{n+1,n+1})(\La^{(1)})^{-1} + 4g^2 e_{n,n} (\La^{(1)})^{-2}.
		\eean
		\qed
	\end{lem}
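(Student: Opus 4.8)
The plan is to verify the three exponential formulas by direct matrix computation, using the explicit matrix forms recorded just before the statement. First I would note that each of $F_0$, $F_i+F_{2n+1-i}$, and $2(F_n+F_{n+1})$, when written as $(2n+1)\times(2n+1)$ matrices via the realization $\gA = \slt[\la,\la^{-1}]$, has the shape $D\,(\La^{(1)})^{-1}$ for a diagonal matrix $D$; indeed $F_j = e_{j,j}(\La^{(1)})^{-1}$ for $j=1,\dots,2n$ and $F_0 = e_{2n+1,2n+1}(\La^{(1)})^{-1}$. So for $F_0$ we have $D = e_{2n+1,2n+1}$ and for $F_i+F_{2n+1-i}$ we have $D = e_{i,i}+e_{2n+1-i,2n+1-i}$; in both cases $D$ is a sum of distinct diagonal idempotents, so one should check that $(D\,(\La^{(1)})^{-1})^2 = 0$, which makes $e^{gD(\La^{(1)})^{-1}} = 1 + gD(\La^{(1)})^{-1}$ immediately.

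The key computational point is the following: if $D = \sum_{k\in K} e_{k,k}$ for an index set $K\subset\{1,\dots,2n+1\}$, then $D\,(\La^{(1)})^{-1}\,D = \sum_{k,l\in K} e_{k,k}(\La^{(1)})^{-1}e_{l,l}$, and since $(\La^{(1)})^{-1} = \sum_{i=1}^{2n}e_{i,i+1}+\la^{-1}e_{2n+1,1}$ sends basis vector $k$ to (a scalar times) basis vector $k-1$ (cyclically), the product $e_{k,k}(\La^{(1)})^{-1}e_{l,l}$ is nonzero only when $k = l-1$ (cyclically). Thus $(D(\La^{(1)})^{-1})^2 = D(\La^{(1)})^{-1}D(\La^{(1)})^{-1}$ vanishes precisely when $K$ contains no two cyclically consecutive indices. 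For $K = \{2n+1\}$ this is trivial, and for $K=\{i,2n+1-i\}$ with $1\le i\le n-1$ the two indices $i$ and $2n+1-i$ differ by $2n+1-2i \ge 3$ and are not cyclically adjacent (one must also check $i$ and $2n+1-i$ are not adjacent mod $2n+1$, i.e. $2n+1-2i\ne 1$ and $2i - (2n+1)\not\equiv \pm1$, which holds since $i\le n-1$), so the square vanishes and the first two formulas follow from $e^X = 1+X$.

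For the third formula the set is $K = \{n,n+1\}$, which does contain two consecutive indices, so $N := (F_n+F_{n+1})$ satisfies $N^2\ne 0$; here I would compute $N^2$ explicitly. Writing $N = (e_{n,n}+e_{n+1,n+1})(\La^{(1)})^{-1}$, the only surviving term in $N^2$ comes from $e_{n+1,n+1}(\La^{(1)})^{-1}e_{n,n}(\La^{(1)})^{-1}$, which equals $e_{n+1,n}(\La^{(1)})^{-1} = e_{n,n}(\La^{(1)})^{-2}$ after using $e_{n+1,n}(\La^{(1)})^{-1} = e_{n+1,n}\bigl(\sum e_{i,i+1}+\la^{-1}e_{2n+1,1}\bigr)$ and then matching with $e_{n,n}(\La^{(1)})^{-2}$; one checks $N^3 = 0$ because $(\La^{(1)})^{-1}$ would then need to move through three consecutive diagonal slots but only $n,n+1$ are present. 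Hence $e^{2gN} = 1 + 2gN + \tfrac{1}{2}(2g)^2 N^2 = 1 + 2g(e_{n,n}+e_{n+1,n+1})(\La^{(1)})^{-1} + 4g^2 e_{n,n}(\La^{(1)})^{-2}$, as claimed. The main obstacle is purely bookkeeping: one must be careful that the matrix products $e_{k,k}(\La^{(1)})^{-1}e_{l,l}$ are computed with the correct cyclic index shift (including the $\la^{-1}$ factor wrapping index $2n+1$ to index $1$), and that in the $\la,\la^{-1}$ realization the powers $(\La^{(1)})^{-1}$ and $(\La^{(1)})^{-2}$ are genuinely the matrix powers, so that e.g. $e_{n,n}(\La^{(1)})^{-2} = e_{n,n-1}(\La^{(1)})^{-1}$ lands on the right off-diagonal entry; once the indices are tracked correctly the three identities drop out.
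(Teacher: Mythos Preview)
The paper records this lemma with a bare \qed\ and no argument, so there is nothing to compare against; your direct matrix computation via the nilpotency of $D(\La^{(1)})^{-1}$ is exactly the intended (and only reasonable) route.

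Your argument for the first two formulas is fine. In the third, however, your bookkeeping slips in precisely the way you warned about. With $N=(e_{n,n}+e_{n+1,n+1})(\La^{(1)})^{-1}$ and your own criterion that $e_{k,k}(\La^{(1)})^{-1}e_{l,l}\ne 0$ only when $k=l-1$, the surviving cross term in $N^2$ is $e_{n,n}(\La^{(1)})^{-1}e_{n+1,n+1}(\La^{(1)})^{-1}$, not $e_{n+1,n+1}(\La^{(1)})^{-1}e_{n,n}(\La^{(1)})^{-1}$ (the latter vanishes since $e_{n+1,n+2}e_{n,n}=0$). That correct term equals $e_{n,n+1}(\La^{(1)})^{-1}=e_{n,n}(\La^{(1)})^{-2}$, so $N^2=e_{n,n}(\La^{(1)})^{-2}$ and $N^3=0$, and hence
\[
e^{2gN}=1+2gN+\tfrac{1}{2}(2g)^2N^2=1+2g(e_{n,n}+e_{n+1,n+1})(\La^{(1)})^{-1}+2g^2\,e_{n,n}(\La^{(1)})^{-2}.
\]
Note that $\tfrac{1}{2}(2g)^2=2g^2$, not $4g^2$; so your computation actually produces the coefficient $2g^2$, and the $4g^2$ in the stated formula appears to be a typo in the paper (it is only used later in a place where the precise constant is irrelevant). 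You should flag this rather than silently reproduce it.
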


	\begin{lem}
		\label{lem lambda}  We have
		\bean
		\label{formula La}
		e_{i+1,i+1}\La^{(1)} = \La^{(1)} e_{i,i}, \qquad
		e_{i,i}(\La^{(1)})^{-1} = (\La^{(1)})^{-1}e_{i+1,i+1},
		\eean
		for all $i$, where we set $e_{2n+2,2n+2}=e_{1,1}$.
		\qed
	\end{lem}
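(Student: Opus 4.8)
The plan is to reduce the whole statement to a one-line matrix computation in the realization $\gA\cong\slt[\la,\la^{-1}]$, in which $\La^{(1)}$ has already been written out explicitly above. First I would record the shape of the matrix $\La^{(1)}$: from $\La^{(1)}=\sum_{j=0}^{2n}E_j$ together with the images $E_0\mapsto\la\otimes e_{1,2n+1}$, $E_i\mapsto 1\otimes e_{i+1,i}$ under $\tau_0$ (equivalently, from the displayed identities $E_0=\La^{(1)}e_{2n+1,2n+1}$ and $E_i=\La^{(1)}e_{i,i}$), one gets
\[
\La^{(1)}=\sum_{i=1}^{2n}e_{i+1,i}+\la\,e_{1,2n+1},
\]
and we are given $(\La^{(1)})^{-1}=\sum_{i=1}^{2n}e_{i,i+1}+\la^{-1}e_{2n+1,1}$.

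Next I would verify the first identity $e_{i+1,i+1}\La^{(1)}=\La^{(1)}e_{i,i}$ directly. Left multiplication by $e_{i+1,i+1}$ extracts the $(i+1)$-st row of $\La^{(1)}$, and right multiplication by $e_{i,i}$ extracts its $i$-th column. For $1\le i\le 2n$ both of these equal the single matrix unit $e_{i+1,i}$; for $i=2n+1$, using the cyclic convention $e_{2n+2,2n+2}=e_{1,1}$, the only nonzero entry in row $1$ (resp.\ column $2n+1$) is the $\la$ coming from $\la\,e_{1,2n+1}$, so both sides equal $\la\,e_{1,2n+1}$. Hence the identity holds for every $i$.

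Finally, the second identity follows from the first by conjugation: multiplying $e_{i+1,i+1}\La^{(1)}=\La^{(1)}e_{i,i}$ on the left and on the right by $(\La^{(1)})^{-1}$ yields $(\La^{(1)})^{-1}e_{i+1,i+1}=e_{i,i}(\La^{(1)})^{-1}$, which is exactly the claim; alternatively, the same row/column bookkeeping applied to the displayed formula for $(\La^{(1)})^{-1}$ gives it directly. There is no real obstacle in this lemma — it is pure bookkeeping — and the only thing to be careful about is tracking the indices modulo $2n+1$ and the placement of the $\la^{\pm1}$ entry, which is what makes the boundary ("wrap-around") case $i=2n+1$ come out right and is the reason the statement can be phrased uniformly "for all $i$".
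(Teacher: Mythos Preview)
Your argument is correct; the paper itself gives no proof at all (the lemma is stated with an immediate \qed), so your direct matrix verification is precisely the routine check the authors left to the reader. There is nothing to compare approaches against — you have simply supplied the omitted one-line computation.
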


	\section{Kac-Moody algebra of type $A_{2n}^{(2)}$}
\label{sec KM2}

	In this section we follow \cite[Section 5]{DS}.

	\subsection{Definition}

	For $n\geq 2$, consider the $(n+1) \times (n+1) $ Cartan matrix of type $\AT$,
	\bea
	\AT= \left(
	\begin{matrix}
		a_{0,0} & a_{0,1}  & \dots & a_{0,n} \\
		\dots  & \dots  & \dots  & \dots \\
		\dots  & \dots  & \dots  & \dots \\
		a_{n,0}  & a_{2n,1}  & \dots & a_{n,n}
	\end{matrix}
	\right)=\left(
	\begin{matrix}
		2 & -1  & 0 & \dots & \dots & \dots & 0\\
		-2  & 2  & -1  & 0 & \dots & \dots &\dots &\\
		0 & -1  & 2 & -1 & \dots & \dots & \dots \\
		\dots & 0 & -1 & \dots & \dots & \dots & \dots \\
		\dots & \dots & \dots & \dots &2 & -1 & 0 \\
		\dots & \dots & \dots & \dots & -1 & 2 & -1 \\
		0 & \dots & \dots & \dots & 0 & -2 & 2
	\end{matrix}
	\right).
	\eea
		The Kac-Moody algebra $\g(\AT)$ {\it of type} $\AT$ is the Lie algebra with {\it canonical generators} $e_i,h_i,f_i\in\gAA, i=0,\dots,n$,
	subject to the relations
	\bea
	&&
	[e_i,f_j]=\delta_{i,j}h_i,
	\qquad
	[h_i, e_j] = a_{i,j} e_j ,
	\qquad
	[h_i,  f_j] = - a_{i,j} f_j,
	\\
	&&
	(\on{ad} e_i)^{1-a_{i,j}} e_j=0,
	\qquad
	(\on{ad}  f_i)^{1-a_{i,j}} f_j=0,
	\qquad
	[h_i,h_j] = 0,
	\\
	&&
	2(h_{0}+h_{1}+\dots+h_{n-1})+h_{n}=0,
	\eea
see \cite[Section 5]{DS}.
	
	The Lie algebra $\gAA$ is graded with respect to the {\it standard grading}, $\deg e_i=1, \deg f_i=-1$,\ $ i=0,\dots,n $.
	Let $\gAA^j=\{x\in\gAA\ |\ \deg x=j\}$, then $\gAA = \oplus_{j\in\Z}\,\gAA^j$.

	Notice that $\gAA^0$ is the n-dimensional space generated by the $h_i$.
	Denote $\h=\gAA^0$. Introduce elements $\al_j$ of the dual space $\h^*$ by the conditions
	$\langle \al_j, h_i\rangle =a_{i,j}$ for $i,j=0,\dots,n$.

	\subsection{Realizations of $\gAA$}
	\label{R gAA}

	Consider the complex Lie algebra $\frak{sl}_{2n+1}$ with standard basis $e_{i,j}$,\ $i,j=1,2n+1$.
	
	Let $w=e^{2\pi i/ (4n+2)}$.
	Define the {\it Coxeter automorphism} $C : \frak{sl}_{2n+1}\to \frak{sl}_{2n+1} $  of order $4n+2$ by the formula
	\bea
	C(X) = -SX^{T}S^{-1},\ \ S = \on{diag}(1,-w,w^{2},\dots,w^{2n-2},-w^{2n-1}, w^{2n}),
	\eea
	where the $\{ \} ^T$ denotes transposition across the antidiagonal.
	Denote $(\frak{sl}_{2n+1})_j=\{ x \in \frak{sl}_{2n+1}\ | \ Cx=w^jx\}$.
	The twisted Lie subalgebra $L(\frak{sl}_{2n+1}, C)\subset \slt[\xi,\xi^{-1}]$ is the subalgebra
	\bea
	L(\frak{sl}_{2n+1}, C) = \oplus_{j \in \mathbb{Z}}\, \xi^{j} \otimes (\frak{sl}_{2n+1})_{j\, \on{mod}\, 4n +2} .
	\eea
	The isomorphism $\tau_C: \gAA \to L(\frak{sl}_{2n+1}, C)$ is defined by the formula
	\bea
	&&
	e_0\mapsto \xi\otimes e_{1,2n+1},
	\qquad
	e_n\mapsto \xi\otimes (e_{n+1,n}+e_{n+2,n+1})
	\\
	&&
	f_0\mapsto \xi^{-1}\otimes e_{2n+1,1},
	\qquad
	f_n\mapsto \xi^{-1}\otimes 2(e_{n,n+1}+e_{n+1,n+2})
	\\
	&&
	e_i\mapsto \xi\otimes (e_{i+1,i}+e_{2n+2-i,2n+1-i}),
	\qquad
	f_i\mapsto \xi^{-1}\otimes (e_{i,i+1}+e_{2n+1-i,2n+2-i}),
	\\
	&&
	h_0\mapsto 1\otimes (e_{1,1}-e_{2n+1,2n+1}),
	\qquad
	h_n\mapsto 1\otimes 2(-e_{n,n}+e_{n+2,n+2}),
	\\
	&&
	h_i\mapsto 1\otimes (-e_{i,i}+e_{i+1,i+1}-e_{2n+1-i,2n+1-i}+e_{2n+2-i,2n+2-i}).
	\eea
	Under this isomorphism we have $\gAA^j=\xi^j\otimes (\slt)_j$.
	Define the {\it standard automorphism} $\sigma_0 :\frak{sl}_{2n+1}\to \frak{sl}_{2n+1} $  of order 2 by the formula
	\bea
	\sigma_0(X) = -QX^{T}Q^{-1} = -X^{T},\ \ Q = \on{diag}(1,-1,\dots,-1,1).
	\eea
	Where the $\{ \} ^T$ is again transposition across the antidiagonal.
	Let $(\slt)_{0,j}=\{ x\in \slt\ | \ \si_0x=(-1)^jx\}$. Then the twisted Lie subalgebra $L(\frak{sl}_{2n+1}, \si_0)\subset \slt[\la,\la^{-1}]$ is the subalgebra
	\bea
	L(\frak{sl}_{2n+1}, \si_0) = \oplus_{j \in \mathbb{Z}} \,\la^{j} \otimes (\frak{sl}_{2n+1})_{0,\,j\, \on{mod}\, 2} .
	\eea
	The isomorphism $\tau_0: \gAA \to L(\frak{sl}_{2n+1}, \si_0)$ is defined by the formula
	\bea
	&&
	e_0\mapsto \la \otimes e_{1,2n+1},
	\qquad
	e_i\mapsto 1 \otimes (e_{i+1,i}+e_{2n+2-i,2n+1-i}),
	\\
	&&
	f_0\mapsto \lambda^{-1}\otimes e_{2n+1,1},
	\qquad
	f_i\mapsto1 \otimes (e_{i,i+1}+e_{2n+1-i,2n+2-i}),
	\\
	&&
	f_n \mapsto  1\otimes 2(e_{n,n+1}+e_{n+1,n+2}),
	\\
	&&
	h_0\mapsto 1\otimes (e_{1,1}-e_{2n+1,2n+1}),
	\qquad
	h_n \mapsto  1\otimes 2(-e_{n,n}+e_{n+2,n+2}),
	\\
	&&
	h_i\mapsto 1\otimes (-e_{i,i}+e_{i+1,i+1}-e_{2n+1-i,2n+1-i}+e_{2n+2-i,2n+2-i}).
	\eea
	
	\subsection{Element $\La^{(2)}$}

	Denote by $\La^{(2)}$ the element $\sum_{i=0}^{n}{e_{i}}\in \gAA$. Then $\zAA=\{x\in\gAA\ | \ [\La^{(2)},x]=0\}$ is an Abelian Lie subalgebra of $\gAA$. Denote $\z^j (A^{(2)}_{2n}) = \z (A^{(2)}_{2n})\cap \gAA^j$, then
	$\zAA=\oplus_{j\in\Z}\,\zAA^j$. 
	We have $\dim \zAA^j=0$ if j is even or $j = 2n+1 \mod 4n+2$ , and $\dim \zAA^{j}= 1$ otherwise.
	
For example, $\dim \z^j(A^{(2)}_{2}) = 1$ if  $j=1,5 \mod 6$ and
	the dimension equals zero otherwise; $\dim \z^j(A^{(2)}_{4}) = 1$ if  $j=1,3,7,9 \mod 10$ and
	the dimension equals zero otherwise.

	If $\gAA$ is realized as $L(\slt,C)$ and written out as $(2n+1) \times (2n+1)$ matrices, then for odd j, $1 \leq j < 2n+1$, introduce the element:
	\bea
	A_{(4n+2)m+j}=\xi^{(4n+2)m+j} \otimes \left(\begin{matrix}
		0 & I_{j} \\
		I_{2n+1-j} & 0 \\
	\end{matrix}\right)
	\qquad
	A_{(4n+2)m-j}=\xi^{(4n+2)m-j} \otimes \left(\begin{matrix}
				0 & I_{2n+1-j} \\
		I_{j} & 0 \\		
	\end{matrix}\right),
	\eea
	where $I_{j}$ is the $j \times j$ identity matrix. We have $A_{(4n+2)m\pm j} = A_{1}^{(4n+2)m \pm j}$.
	\bigskip
	If $\gAA$ is realized as $L(\slt,\si_0)$ and written out as $(2n+1) \times (2n+1)$ matrices, 
	then for odd j, $1 \leq j < 2n+1$, introduce the elements:
	\bea
	B_{(4n+2)m+j}=\left(\begin{matrix}
		0 & \la^{2m+1} \otimes I_{j} \\
		\la^{2m} \otimes I_{2n+1-j} & 0 \\
		
	\end{matrix}\right),
	\qquad
	B_{(4n+2)m-j}=\left(\begin{matrix}	
		0 & \la^{2m} \otimes I_{2n+1-j} \\
		\la^{2m-1}\otimes I_{j} & 0 \\
		
	\end{matrix}\right).
	\eea
	We have $B_{(4n+2)m\pm j} = B_{1}^{(4n+2)m \pm j}$.
	
	\begin{lem}
		For any $m\in \Z$, odd j, $1 \leq j < 2n+1$, the elements
		\bea
		(\tau_C)^{-1}(A_{(4n+2)m\pm j}),
		\qquad
		(\tau_0)^{-1}(B_{(4n+2)m\pm j}),
		\eea
		of $\zAA^{(4n+2)m\pm j}$
		are equal.
		\qed
	\end{lem}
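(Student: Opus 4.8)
Following the device used for $\gA$ in Section~\ref{sec KM1}, the plan is to transport the question to a single realization of $\gAA$ by an explicit isomorphism of associative algebras intertwining the two realizations. Let $\iota$ be the $\C$-linear map defined on $\frak{gl}_{2n+1}[\la,\la^{-1}]$ by
\[
\iota:\ \la^{m}\ox e_{k,l}\ \longmapsto\ \xi^{(2n+1)m+k-l}\ox e_{k,l}.
\]
Comparing $\iota\bigl((\la^{m}\ox e_{k,l})(\la^{m'}\ox e_{l,p})\bigr)$ with $\iota(\la^{m}\ox e_{k,l})\,\iota(\la^{m'}\ox e_{l,p})$ shows that $\iota$ is a unital injective homomorphism of associative algebras (hence of Lie algebras), and in particular it restricts to a Lie-algebra embedding $\slt[\la,\la^{-1}]\hookrightarrow\slt[\xi,\xi^{-1}]$ that commutes with inversion of invertible elements. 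The first step is to prove that $\iota\circ\tau_0=\tau_C$.

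Since $\gAA$ is generated by its canonical generators $e_i,f_i,h_i$ $(i=0,\dots,n)$, it suffices to check $\iota\circ\tau_0=\tau_C$ on these, using the explicit matrices of Section~\ref{R gAA}. For instance $\tau_0(e_0)=\la\ox e_{1,2n+1}$ maps to $\xi^{(2n+1)+1-(2n+1)}\ox e_{1,2n+1}=\xi\ox e_{1,2n+1}=\tau_C(e_0)$; for $1\le i\le n-1$ every summand of $\tau_0(e_i)$ and $\tau_0(f_i)$ sits one place off the main diagonal and is therefore multiplied by $\xi^{\pm1}$, landing on $\tau_C(e_i)$ and $\tau_C(f_i)$; the elements $\tau_0(e_n),\tau_0(f_n)$ are the same; and the $h_i$, being diagonal, are left fixed by $\iota$, matching $\tau_C(h_i)$. (The matrix $\tau_0(e_n)$ is not displayed in Section~\ref{R gAA}; one takes it to be $\iota^{-1}(\tau_C(e_n))=1\ox(e_{n+1,n}+e_{n+2,n+1})$.) Thus $\iota$ carries the generating set $\{\tau_0(e_i),\tau_0(f_i),\tau_0(h_i)\}$ of $L(\slt,\si_0)$ onto the generating set $\{\tau_C(e_i),\tau_C(f_i),\tau_C(h_i)\}$ of $L(\slt,C)$, so $\iota$ restricts to an isomorphism of $L(\slt,\si_0)$ onto $L(\slt,C)$ with $\iota\circ\tau_0=\tau_C$, and consequently $\tau_C^{-1}=\tau_0^{-1}\circ\iota^{-1}$.

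The second step is to check that $\iota\bigl(B_{(4n+2)m\pm j}\bigr)=A_{(4n+2)m\pm j}$. From the displayed form one has $B_1=\la\ox e_{1,2n+1}+\sum_{k=1}^{2n}1\ox e_{k+1,k}$, and $\iota$ sends $\la\ox e_{1,2n+1}$ to $\xi^{(2n+1)+1-(2n+1)}\ox e_{1,2n+1}=\xi\ox e_{1,2n+1}$ and each $1\ox e_{k+1,k}$ to $\xi^{(k+1)-k}\ox e_{k+1,k}=\xi\ox e_{k+1,k}$; comparing with the displayed form of $A_1$ gives $\iota(B_1)=A_1$. Because $\iota$ is an associative homomorphism commuting with inversion, $\iota(B_1^{s})=\iota(B_1)^{s}=A_1^{s}$ for every $s\in\Z$; since $B_{(4n+2)m\pm j}=B_1^{(4n+2)m\pm j}$ and $A_{(4n+2)m\pm j}=A_1^{(4n+2)m\pm j}$, the claim follows. (One may instead verify $\iota(B_{(4n+2)m\pm j})=A_{(4n+2)m\pm j}$ block by block straight from the two displayed matrices, the $\xi$-exponents matching precisely because of the shift by $k-l$ in the definition of $\iota$.)

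Combining the two steps,
\[
(\tau_C)^{-1}\bigl(A_{(4n+2)m\pm j}\bigr)
=\tau_0^{-1}\Bigl(\iota^{-1}\bigl(A_{(4n+2)m\pm j}\bigr)\Bigr)
=\tau_0^{-1}\bigl(B_{(4n+2)m\pm j}\bigr),
\]
which is the assertion. (That both elements lie in $\zAA^{(4n+2)m\pm j}$ and are nonzero is then automatic: $j$ is odd with $0<j<2n+1$, so $(4n+2)m\pm j$ is odd and not congruent to $2n+1$ modulo $4n+2$, whence $\dim\zAA^{(4n+2)m\pm j}=1$, while $A_{(4n+2)m\pm j}$ is a nonzero power of the invertible element $A_1$.) I expect the only place needing genuine care to be the case analysis for $\iota\circ\tau_0=\tau_C$ on all of the $e_i,f_i,h_i$ — in particular committing to the value of $\tau_0(e_n)$ — everything else being bookkeeping with the exponent $(2n+1)m+k-l$.
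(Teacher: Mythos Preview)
Your proof is correct. The paper gives no proof of this lemma (it is marked with a bare \qed), so there is no ``paper's own proof'' to compare against; your argument via the explicit intertwiner $\iota:\la^{m}\ox e_{k,l}\mapsto\xi^{(2n+1)m+k-l}\ox e_{k,l}$ is precisely the natural one, and it is the same device the paper itself employs in the proof of the neighboring Lemma in Section~\ref{sec KM1} (the statement $(T)_{\La^{(1)}}^{\pm}=T^{\pm}$). One small remark: your caveat about $\tau_0(e_n)$ is unnecessary, since the displayed formula $e_i\mapsto 1\ox(e_{i+1,i}+e_{2n+2-i,2n+1-i})$ already specializes correctly at $i=n$ to $1\ox(e_{n+1,n}+e_{n+2,n+1})$, agreeing with $\iota^{-1}(\tau_C(e_n))$.
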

	
	Denote the elements $(\tau_C)^{-1}(A_{(4n+2)m+j})$, and $(\tau_0)^{-1}(A_{(4n+2)m-j})$ of $\gAA$ by $\La^{(2)}_{(4n+2)m+j}$ and
	$\La^{(2)}_{(4n+2)m-j}$, respectively.  Notice that $\La^{(2)}_1=\sum_{i=0}^n{e_{i}}=\La^{(2)}$. We set $\La^{(2)}_j=0$ if j is even or $j = 2n +1 \mod 4n+2$.
	The element $\La^{(2)}_{(4n+2)m \pm j}$ generates $\zAA^{(4n+2)m \pm j}$.
	
	\subsection{Lie algebra $\gAA$ as a subalgebra of $\gA$}
	\label{sec sub}
	
	The map  $ \rho :\gAA \to \gA$,
	\bea
	&&
	e_{0}\mapsto E_{0},
	\qquad
	e_{i}\mapsto E_{i}+E_{2n+1-i},
	\qquad 
	e_{n}\mapsto E_{n}+E_{n+1},
	\\
	&&
	f_{0}\mapsto F_{0}, 
	\qquad
	f_{i}\mapsto F_{i}+F_{2n+1-i},
	\qquad
	f_{n} \mapsto 2(F_{n}+F_{n+1}),
	\\
	&&
	h_{0} \mapsto H_{0}, 
	\qquad
	h_{i}\mapsto H_{i}+H_{2n+1-i},
	\qquad
	h_{n}\mapsto 2(H_{n}+H_{n+1}),	
	\eea
	where  $i = 1,\dots, n-1$, realizes the Lie algebra $\gAA$ as a subalgebra of $\gA$.
	This embedding preserves the standard grading and $\rho (\La^{(2)})=\La^{(1)}$.
	We have $\rho (\zAA^{j}) \subset \zA^{j}$.

	\section{mKdV equations}
	\label{sec MKDV}

	In this section we follow \cite{DS}.

	\subsection{mKdV equations of type $A^{(1)}_{2n}$}

Denote by $\Bb$ the space of complex-valued functions of one variable $x$.  Given a finite dimensional vector space $W$, denote by $\Bb(W)$ the space of
$W$-valued functions of $x$. Denote by $\der$ the differential operator $\frac d{dx}$.

Consider the Lie algebra $\tilde{\g}(A_{2n}^{(1)})$ of the formal differential operators of the form $c\der + \sum_{i=-\infty}^k p_i$, $c\in \C, p_i\in \B(\gA^i)$.
Let $U=\sum_{i<0} U_i$, $U_i \in \B(\gA^{i})$. If $\L\in\tilde\g(A_{2n}^{(1)})$, define
\bea
e^{\ad U}(\L) = \L + [U,\L] + \frac 1{2!}[U,[U,\L]]+\dots \ .
\eea
The operator $e^{\ad U}(\L)$ belongs to $\tilde \g(A_{2n}^{(1)})$. The map $e^{\ad U}$ is an automorphism of the Lie algebra $\tilde \g(A_{2n}^{(1)})$.
The automorphisms of this type form a group.
If elements of $\gA$ are realized as matrices depending on a parameter
as in Section \ref{Realizations of gA}, then
$e^{\ad U}(\L) = e^U\L e^{-U}$.

	A {\it Miura oper} of type $A^{(1)}_{2n}$  is a differential operator of the form
	\bean
	\label{Miura la}
	\L = \der + \Lambda^{(1)} + V,
	\eean
	where $\Lambda^{(1)} = \sum_{i=0}^{2n} E_{i}\in\gA$ and $V\in \B(\gA^0)$.
	Any Miura oper of type $A^{(1)}_{2n}$ is an element of $\tilde \g(A^{(1)}_{2n})$.
	Denote by $\mc{M}(A^{(1)}_{2n})$ the space of all Miura opers of type $A^{(1)}_{2n}$.

	\begin{prop}
		[{\cite[Proposition 6.2]{DS}}]
		\label{Prop U3}
		
		For any Miura oper $\L$ of type $A^{(1)}_{2n}$ there exists an element
		$U=\sum_{i<0} U_i$, $U_i \in \B(\gA^{i})$, such that the operator $\L_0 = e^{\ad U}(\L)$ has the form
		\bea
		\L_0 = \der + \La^{(1)} + H,
		\eea
		where $H=\sum_{j<0} H_j, H_j\in \B(\zA^{j})$. If $U,\tilde U$ are two such elements, then
		$e^{\ad U}e^{-\ad \tilde U} = e^{\ad T}$, where $T = \sum_{j<0} T_j$, $T_j\in \z(A^{(1)}_{2n})^j$.
\qed	\end{prop}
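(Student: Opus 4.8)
\smallskip
\noindent\emph{Proof proposal.}
The plan is to normalize $\L$ one graded component at a time, starting in degree $0$ and descending. The structural input is a standard property of the regular element $\La^{(1)}$: for every $j\in\Z$ the map $\ad\La^{(1)}\colon\gA^{j-1}\to\gA^{j}$ has kernel $\zA^{j-1}$ and one has the direct sum decomposition
\[
\gA^{j}=\zA^{j}\oplus[\La^{(1)},\gA^{j-1}] .
\]
Indeed, in the realization $\gA=L(\slt,C)$ of Section~\ref{Realizations of gA} one has $\La^{(1)}=A_{1}=\xi\otimes P$ with $P$ the matrix of a cyclic permutation; $\ad P$ is semisimple on $\mathfrak{gl}_{2n+1}$ with kernel the diagonal matrices, hence $\ad\La^{(1)}$ is semisimple on $\gA$ with kernel $\zA$, and the displayed formula is the degree-$j$ piece of $\gA=\zA\oplus\on{im}(\ad\La^{(1)})$; see also \cite[Section 3]{DS}. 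In particular $\zA^{0}=0$, so $\gA^{0}=[\La^{(1)},\gA^{-1}]$, and $\zA^{j}\cap[\La^{(1)},\gA^{j-1}]=0$ for all $j$.

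\smallskip
\noindent\emph{Existence.}
Look for $U=\sum_{i<0}U_{i}$, $U_{i}\in\B(\gA^{i})$, and write $e^{\ad U}(\L)=\der+\La^{(1)}+\sum_{j\le0}P_{j}$ with $P_{j}\in\B(\gA^{j})$. Since $[U,\der]=-U'$ has the same grading as $U$ and $\ad$ raises degree by at most one, a degree count (using $\La^{(1)}\in\gA^{1}$, $V\in\B(\gA^{0})$) shows
\[
P_{j}=[U_{j-1},\La^{(1)}]+R_{j},
\]
where $R_{j}$ is a universal expression in $V$ and in the $U_{i}$ with $i\ge j$ only, collecting $-U_{j}'$, $[U_{j},V]$ and the iterated brackets $\tfrac1{k!}(\ad U_{\cdot})^{k}$, $k\ge2$, of total degree $j$. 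Running through $j=0,-1,-2,\dots$, assume $U_{i}$ has been chosen for all $i\ge j$; split $R_{j}=R_{j}'+R_{j}''$ along $\gA^{j}=\zA^{j}\oplus[\La^{(1)},\gA^{j-1}]$, solve $[U_{j-1},\La^{(1)}]=-R_{j}''$ for $U_{j-1}$ (possible since $\ad\La^{(1)}$ maps $\gA^{j-1}$ onto $[\La^{(1)},\gA^{j-1}]$, and $U_{j-1}$ is determined up to an element of $\B(\zA^{j-1})$), and set $H_{j}:=R_{j}'\in\B(\zA^{j})$; for $j=0$ this reads $H_{0}=0$ and $[U_{-1},\La^{(1)}]=-V$. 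The resulting $U$ gives $e^{\ad U}(\L)=\der+\La^{(1)}+H$ with $H=\sum_{j<0}H_{j}$, $H_{j}\in\B(\zA^{j})$; the recursion is well defined because the $j$-th step alters only components of degree $\le j$, so the infinite composition of the corresponding automorphisms converges componentwise and stays of the prescribed type.

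\smallskip
\noindent\emph{Uniqueness up to $\z$.}
Let $U,\tilde U$ both have the stated property. Since the automorphisms $e^{\ad W}$, $W=\sum_{i<0}W_{i}$, $W_{i}\in\B(\gA^{i})$, form a group (as recalled above), $e^{\ad U}e^{-\ad\tilde U}=e^{\ad T}$ for some $T=\sum_{i<0}T_{i}$, $T_{i}\in\B(\gA^{i})$, and $e^{\ad T}(\der+\La^{(1)}+\tilde H)=\der+\La^{(1)}+H$ with $H,\tilde H$ valued in the $\B(\zA^{j})$. I claim $T_{i}\in\B(\zA^{i})$ for all $i$, by induction on $-i$. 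The degree-$0$ component of $e^{\ad T}(\der+\La^{(1)}+\tilde H)$ equals $[T_{-1},\La^{(1)}]$ (as $\tilde H_{0}=0$), and it must equal $H_{0}=0$, so $T_{-1}\in\B(\zA^{-1})$. Assume $T_{-1},\dots,T_{-m}$ are $\zA$-valued. Because $\zA$ is abelian and commutes with $\La^{(1)}$ and with $\tilde H$, every bracket built out of $T_{-1},\dots,T_{-m}$, $\La^{(1)}$ and $\tilde H$ vanishes, while each $[T_{-i},\der]=-T_{-i}'$ remains $\B(\zA^{-i})$-valued; hence the degree-$(-m)$ component of $e^{\ad T}(\der+\La^{(1)}+\tilde H)$ is $[T_{-m-1},\La^{(1)}]$ plus a $\B(\zA^{-m})$-valued term. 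Equating it to $H_{-m}\in\B(\zA^{-m})$ gives $[T_{-m-1},\La^{(1)}]\in\B(\zA^{-m})\cap\B([\La^{(1)},\gA^{-m-1}])=0$, so $T_{-m-1}\in\B(\zA^{-m-1})$, which closes the induction. Thus $T=\sum_{j<0}T_{j}$ with $T_{j}\in\B(\zA^{j})$, as claimed.

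\smallskip
\noindent\emph{Main obstacle.}
The only non-formal ingredient is the decomposition $\gA^{j}=\zA^{j}\oplus[\La^{(1)},\gA^{j-1}]$ together with the triviality of the intersection $\zA^{j}\cap[\La^{(1)},\gA^{j-1}]$; everything else is bookkeeping of degrees. In the uniqueness part, the point that makes the induction close is precisely that $\zA$ is abelian: this annihilates all the lower-degree brackets and reduces the degree-$(-m)$ identity to the linear equation $[T_{-m-1},\La^{(1)}]=(\text{a central term})$, which forces both sides to vanish. A minor additional point, also handled by the grading, is the convergence of the infinite composition of automorphisms used to assemble $U$, each factor of which differs from the identity only in strictly lower degrees.
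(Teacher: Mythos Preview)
The paper does not give its own proof of this proposition; it is stated with a \qed and attributed to \cite[Proposition~6.2]{DS}. Your argument is the standard Drinfeld--Sokolov recursion and is essentially correct: the key structural input is the semisimplicity of $\ad\La^{(1)}$, which yields $\gA^{j}=\zA^{j}\oplus[\La^{(1)},\gA^{j-1}]$, and the rest is degree bookkeeping as you outline.

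One minor slip worth correcting: the kernel of $\ad P$ on $\mathfrak{gl}_{2n+1}$ is \emph{not} the diagonal matrices in the standard basis, but the centralizer of $P$, namely the span of $I,P,P^{2},\dots,P^{2n}$ (equivalently, matrices diagonal in the eigenbasis of $P$). What you actually need---and what holds---is that $P$ is diagonalizable with distinct eigenvalues, hence $\ad P$ is semisimple, so $\slt=\ker(\ad P)\oplus\on{im}(\ad P)$; since $\ad P$ shifts the $C$-grading by one, this restricts to the graded decomposition you use. The remainder of your existence and uniqueness arguments goes through unchanged. Also, your phrase ``infinite composition of the corresponding automorphisms'' is slightly at odds with the construction you actually carry out (a single $U=\sum_{i<0}U_i$ built recursively, not a composition of $e^{\ad U_i}$'s); either description works, but it would be cleaner to stick with one.
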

	
	Let $\L, U$ be as in Proposition \ref{Prop U3}. Let  $r \neq 0$ mod $2n+1$.
	The element $\phi(\La^{(1)}_{r})=e^{-\ad U} (\La^{(1)}_{r})$ does not depend on the choice of $U$ in Proposition \ref{Prop U3}.
	
	The element $\phi(\La^{(1)}_{r})$
	is of the form $\sum^k_{i=-\infty} \phi(\La^{(1)}_{r})^i$, $\phi(\La^{(1)}_{r})^i\in\B(\gA^i)$.
	We set $\phi(\La^{(1)}_{r})^+ = \sum^k_{i=0} \phi(\La^{(1)}_{r})^i$,
	$\phi(\La^{(1)}_{r})^- = \sum_{i<0} \phi(\La^{(1)}_{r})^i$.

	Let $r\in\Z_{>0}$ and  $r \neq 0$ mod $2n+1$.
	The differential equation
	\bean
	\label{mKdVr 3}
	\frac{\partial \L}{\partial t_r}  = [\phi(\La^{(1)}_{r})^+, \L]
	\eean
	is called the {\it  $r$-th  mKdV equation} of type $A^{(1)}_{2n}$.
	
	Equation \Ref{mKdVr 3} defines vector fields $\frac\der{\der t_r}$ on the space $\M(A^{(1)}_{2n})$ of Miura opers of type $A^{(1)}_{2n}$. For
	all $r,s$, the vector fields $\frac\der{\der t_r}$, $\frac\der{\der t_s}$ commute, see \cite[Section 6]{DS}.
	
	\begin{lem}[{\cite{DS}}]
		\label{lem der1}
		
		We have
		\bean
		\label{mKdVr 0}
		\frac{\partial\L}{\partial t_r}  = - \frac d{dx}\phi(\La^{(1)}_{r})^0.
		\eean
\qed
	\end{lem}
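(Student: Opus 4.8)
The plan is to derive the lemma from the single fact that the Miura oper $\L$ commutes with $\phi(\La^{(1)}_r)$, and then to extract the asserted formula by a degree count in the graded Lie algebra $\tilde\g(A^{(1)}_{2n})$. Throughout I abbreviate $\phi:=\phi(\La^{(1)}_r)$, write $\phi=\sum_i\phi^i$ with $\phi^i\in\B(\gA^i)$, and set $\phi^+=\sum_{i\ge0}\phi^i$, $\phi^-=\sum_{i<0}\phi^i$ as in the text. Fix $U=\sum_{i<0}U_i$ as in Proposition \ref{Prop U3}, so that $\L_0:=e^{\ad U}(\L)=\der+\La^{(1)}+H$ with $H=\sum_{j<0}H_j$, $H_j\in\B(\zA^j)$, and recall that $\phi=e^{-\ad U}(\La^{(1)}_r)$.

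First I would check that $[\L,\phi]=0$. Since $\La^{(1)}_r$ does not depend on $x$, one has $[\der,\La^{(1)}_r]=0$; since $\La^{(1)}=\La^{(1)}_1$ and each value $H_j(x)$ lies in the abelian Lie subalgebra $\zA$, one also has $[\La^{(1)},\La^{(1)}_r]=0$ and $[H,\La^{(1)}_r]=0$, so $[\L_0,\La^{(1)}_r]=0$. Applying the automorphism $e^{-\ad U}$ of $\tilde\g(A^{(1)}_{2n})$, which carries $\L_0$ to $\L$ and $\La^{(1)}_r$ to $\phi$, gives $[\L,\phi]=0$, and therefore
\[
[\phi^+,\L]=-[\phi^-,\L].
\]

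Next comes the grading argument. Writing $\L=\der+\La^{(1)}+V$ with $\La^{(1)}\in\gA^1$, $V\in\B(\gA^0)$, and using that $[\psi,\der]=-\frac{d}{dx}\psi$ lies in $\B(\gA^i)$ for $\psi\in\B(\gA^i)$, one sees that $[\phi^+,\L]$ has nonzero homogeneous components only in degrees $\ge0$, while $[\phi^-,\L]$ has nonzero components only in degrees $\le0$. By the displayed identity these two expressions coincide up to sign, so both must be concentrated in degree $0$; in particular $[\phi^+,\L]\in\B(\gA^0)$ (which reconfirms that the $r$-th mKdV flow is tangent to $\M(A^{(1)}_{2n})$), and $[\phi^+,\L]$ equals its own degree-zero component.

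Finally I would compute that component. In degree $0$ the term $[\phi^+,\der]=-\frac{d}{dx}\phi^+$ contributes $-\frac{d}{dx}\phi^0$; the bracket $[\phi^+,\La^{(1)}]$ contributes nothing, its lowest degree being $1$; and the degree-zero part of $[\phi^+,V]$ is $[\phi^0,V]$, which vanishes since $\gA^0$ is abelian and $\phi^0(x),V(x)\in\gA^0$. Hence
\[
\frac{\partial\L}{\partial t_r}=[\phi^+,\L]=-\frac{d}{dx}\phi^0=-\frac{d}{dx}\phi(\La^{(1)}_r)^0,
\]
which is the claim. The argument is essentially routine; the only place that needs attention is the degree bookkeeping in $\tilde\g(A^{(1)}_{2n})$ — in particular the fact that bracketing with $\der$ produces the $x$-derivative in the \emph{same} degree — and the one real input is the vanishing $[\L,\phi]=0$ of the first step.
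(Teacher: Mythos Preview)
Your argument is correct and is precisely the standard Drinfeld--Sokolov computation: the paper itself gives no proof of this lemma, merely citing \cite{DS} and marking it with a \qed, so there is nothing to compare against beyond noting that your derivation (commutation $[\L,\phi]=0$ via $e^{-\ad U}$, followed by the degree-zero extraction) is exactly the argument one finds in \cite{DS}.
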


	\subsection{mKdV equations of type $A^{(2)}_{2n}$}

	A {\it Miura oper} of type $A^{(2)}_{2n}$  is a differential operator of the form
	\bean
	\label{Miura la}
	\L = \der + \Lambda^{(2)} + V,
	\eean
	where $\Lambda^{(2)} =\sum_{i=0}^{n} e_{i} \in\gAA$ and $V\in \B(\gAA^0)$.
	Denote by $\mc{M}(A^{(2)}_{2n})$ the space of all Miura opers of type $A^{(2)}_{2n}$.

	\begin{prop}
		[{\cite[Proposition 6.2]{DS}}]
		\label{Prop U}
		
		For any Miura oper $\L$ of type $A^{(2)}_{2n}$ there exists an element
		$U=\sum_{i<0} U_i$, $U_i \in \B(\g(A^{(2)}_{2n})^{i})$, such that the operator $\L_0 = e^{\ad U}(\L)$ has the form
		\bea
		\L_0 = \der + \La^{(2)} + H,
		\eea
		where $H=\sum_{j<0} H_j, H_j\in \B(\zAA^{j})$. If $U,\tilde U$ are two such elements, then
		$e^{\ad U}e^{-\ad \tilde U} = e^{\ad T}$, where $T = \sum_{j<0} T_j$, $T_j\in \zAA^j$.
\qed	\end{prop}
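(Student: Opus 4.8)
The plan is to prove this exactly as \cite[Proposition~6.2]{DS} is proved, namely by the Drinfeld--Sokolov normalization procedure, recursing on negative degrees; the whole argument is the analogue of the one behind Proposition~\ref{Prop U3} for $A^{(1)}_{2n}$. The only ingredient that is not purely formal is the graded Kostant-type splitting
\[
\gAA^{j}\;=\;\zAA^{j}\;\oplus\;[\La^{(2)},\gAA^{j-1}]\qquad(j\in\Z),
\]
equivalently $\gAA=\zAA\oplus[\La^{(2)},\gAA]$: since $\zAA=\ker(\ad\La^{(2)})$ by definition, the content is that the kernel and the image of $\ad\La^{(2)}$ are complementary, i.e.\ the regularity of the cyclic element $\La^{(2)}$, and this is reflected in the dimension count $\dim\zAA^{j}\in\{0,1\}$ recorded above. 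I would either quote this from \cite{DS}, or deduce it from the corresponding $A^{(1)}_{2n}$ statement (which underlies Proposition~\ref{Prop U3}) by averaging over the order-two automorphism $\theta$ of $\gA$ fixing $\rho(\gAA)$ pointwise (it sends $E_i\mapsto E_{2n+1-i}$, $F_i\mapsto F_{2n+1-i}$, $H_i\mapsto H_{2n+1-i}$, indices mod $2n+1$): since $\theta(\La^{(1)})=\La^{(1)}$, the map $\theta$ commutes with $\ad\La^{(1)}$, so the $A^{(1)}_{2n}$ splitting is $\theta$-stable and restricts to $\rho(\gAA)$. I will use throughout that $\zAA$ is abelian, centralizes $\La^{(2)}$, and, since $0$ is even, $\zAA^{0}=0$, whence $\gAA^{0}=[\La^{(2)},\gAA^{-1}]$.

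For \emph{existence} I would build $U=\sum_{i\le-1}U_i$, $U_i\in\B(\gAA^{i})$, recursively. Suppose $U_{-1},\dots,U_{-m+1}$ have been found so that
\[
e^{\ad(U_{-1}+\dots+U_{-m+1})}(\L)\;=\;\der+\La^{(2)}+\sum_{1-m<j\le0}H_j\;+\;W\;+\;(\text{terms of degree}<1-m),
\]
with $H_j\in\B(\zAA^{j})$ (so $H_0=0$) and $W\in\B(\gAA^{1-m})$; for $m=1$ this reads $\L=\der+\La^{(2)}+V$ with $W=V$. By the splitting, write $W=H_{1-m}+[\La^{(2)},U_{-m}]$ with $H_{1-m}\in\B(\zAA^{1-m})$ and $U_{-m}\in\B(\gAA^{-m})$. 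Because $U_{-m}$ has degree $-m$ and the operator carries no term of degree exceeding $1$, conjugation by $e^{\ad U_{-m}}$ fixes $\der$ and $\La^{(2)}$, alters no component of degree $>1-m$, and replaces $W$ by $W-[\La^{(2)},U_{-m}]=H_{1-m}$; the other new contributions ($[U_{-m},\der]=-U_{-m}'$, the brackets $[U_{-m},H_j]$, and all higher iterated brackets) have degree $<1-m$. Iterating yields well-defined (degreewise finite) series $U=\sum U_i$ and $H=\sum H_j$, proving existence.

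For the \emph{moreover} part, let $U,\tilde U$ both satisfy the conclusion, say $e^{\ad U}(\L)=\der+\La^{(2)}+H$ and $e^{\ad\tilde U}(\L)=\der+\La^{(2)}+\tilde H$. Since the automorphisms $e^{\ad(\,\cdot\,)}$ with argument in $\bigoplus_{i<0}\B(\gAA^{i})$ form a group, $e^{\ad U}e^{-\ad\tilde U}=e^{\ad T}$ with $T=\sum_{i\le-1}T_i$, and $e^{\ad T}(\der+\La^{(2)}+\tilde H)=\der+\La^{(2)}+H$. I would then prove $T_i\in\zAA^{i}$ for all $i$ by induction on $m$, the hypothesis being $T_{-1},\dots,T_{-m+1}\in\zAA$. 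Comparing degree-$(1-m)$ components of the last identity: because $\zAA$ is abelian and $[\zAA,\La^{(2)}]=0$, every iterated bracket built solely from $T_{-1},\dots,T_{-m+1}$ together with $\La^{(2)},\der,\tilde H$ either vanishes or reduces to $-T_{1-m}'$, while the remaining term, coming from the $T_{-m}$ slot, is $[T_{-m},\La^{(2)}]$; hence
\[
[T_{-m},\La^{(2)}]\;=\;H_{1-m}-\tilde H_{1-m}+T_{1-m}'\;\in\;\B(\zAA^{1-m})
\]
(with $H_0=\tilde H_0=T_0:=0$ in the base case $m=1$). The left-hand side lies in $[\La^{(2)},\gAA^{-m}]$, which by the splitting meets $\zAA^{1-m}$ only in $0$; so $[T_{-m},\La^{(2)}]=0$, i.e.\ $T_{-m}\in\zAA^{-m}$, closing the induction and the proof.

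The step I expect to be the main obstacle is the Kostant-type splitting --- equivalently, the regularity of $\La^{(2)}$; everything downstream is bookkeeping with the standard grading. Failing a citation to \cite{DS} or the $\theta$-averaging reduction to $A^{(1)}_{2n}$ sketched above, I would establish it directly in the realizations of Section~\ref{R gAA}, using the explicit description there of the spaces $\zAA^{j}$ and of the elements $\La^{(2)}_{(4n+2)m\pm j}$ generating them, together with the matrix form of $\ad\La^{(2)}$ in $L(\slt,C)$.
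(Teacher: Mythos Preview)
The paper gives no proof of this proposition: it is stated with a \qed\ and attributed to \cite[Proposition~6.2]{DS}, so there is nothing in the paper to compare against. Your proposal is a correct reconstruction of the standard Drinfeld--Sokolov normalization argument that \cite{DS} carries out, and your identification of the graded splitting $\gAA^{j}=\zAA^{j}\oplus[\La^{(2)},\gAA^{j-1}]$ (regularity of $\La^{(2)}$) as the only non-formal input is exactly right; the recursive existence step and the degree-by-degree uniqueness induction are both sound as written.
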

	
	Let $\L, U$ be as in Proposition \ref{Prop U}. Let  $r$ be odd, $r\neq 2n+1 $ mod $4n+2$.
	The element $\phi(\La^{(2)}_r)=e^{-\ad U}(\La^{(2)}_r)$ does not depend on the choice of $U$ in Proposition \ref{Prop U}.
	
	The element $\phi(\La^{(2)}_r)$
	is of the form $\sum^k_{i=-\infty} \phi(\La^{(2)}_r)^i$, $\phi(\La^{(2)}_r)^i\in\B(\gAA^i)$.
	We set $\phi(\La^{(2)}_r)^+ = \sum^k_{i=0} \phi(\La^{(2)}_r)^i$,
	$\phi(\L^{(2)}a_r)^- = \sum_{i<0} \phi(\La^{(2)}_r)^i$.

	Let $r\in\Z_{>0}$, $r$ odd and  $r \neq 2n+1 $ mod $4n+2$.
	The differential equation
	\bean
	\label{mKdVr}
	\frac{\partial \L}{\partial t_r}  = [\phi(\La^{(2)}_r)^+, \L]
	\eean
	is called the {\it  $r$-th  mKdV equation} of type $A^{(2)}_{2n}$.
	
	Equation \Ref{mKdVr} defines vector fields $\frac\der{\der t_r}$ on the space $\M(A^{(2)}_2)$ of Miura opers. For
	all $r,s$, the vector fields $\frac\der{\der t_r}$, $\frac\der{\der t_s}$ commute, see \cite[Section 6]{DS}.

	\begin{lem}[{\cite{DS}}]
		\label{lem der2}
		
		We have
		\bean
		\label{mKdVr 0}
		\frac{\partial\L}{\partial t_r}  = - \frac d{dx}\phi(\La^{(2)}_r)^0.
		\eean
\qed
	\end{lem}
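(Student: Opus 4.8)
\emph{Proof proposal.} The plan is to show that the full element $\phi(\La^{(2)}_r)$ commutes with $\L$, and then to read off the asserted identity by comparing degrees. Recall from Proposition \ref{Prop U} that $\L=e^{-\ad U}(\L_0)$ with $\L_0=\der+\La^{(2)}+H$, $H(x)\in\zAA$ pointwise, and that $\phi(\La^{(2)}_r)=e^{-\ad U}(\La^{(2)}_r)$. Since $\La^{(2)}_r$ is a constant matrix, $[\La^{(2)}_r,\der]=\der(\La^{(2)}_r)=0$; since $\La^{(2)}=\La^{(2)}_1\in\zAA$ and $\zAA$ is abelian, $[\La^{(2)}_r,\La^{(2)}]=0$; and since $H(x)\in\zAA$, $[\La^{(2)}_r,H]=0$. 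Hence $[\La^{(2)}_r,\L_0]=0$, and applying the automorphism $e^{-\ad U}$ of $\tilde\g(\AT)$ gives $[\phi(\La^{(2)}_r),\L]=0$; in particular $[\phi(\La^{(2)}_r)^+,\L]=-[\phi(\La^{(2)}_r)^-,\L]$.

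Next I would run a grading argument on this last identity. Write $\phi=\phi(\La^{(2)}_r)$ for brevity. Since $U$ has strictly negative degrees, $\phi=\La^{(2)}_r+\phi'$ where $\phi'$ collects components of degree less than $r$; thus $\phi^+=\sum_{i=0}^{r}\phi^i$ has degrees in $\{0,\dots,r\}$ while $\phi^-=\sum_{i<0}\phi^i$ has degrees at most $-1$. The Miura oper $\L=\der+\La^{(2)}+V$ has a degree-$0$ part $\der+V$ and a degree-$1$ part $\La^{(2)}$, and the adjoint action of $\der$ preserves degrees. Therefore $[\phi^+,\L]$ has only components of degree $\ge 0$, whereas $[\phi^-,\L]$ has only components of degree $\le 0$; being negatives of one another, both are concentrated in degree $0$. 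Consequently $\frac{\der\L}{\der t_r}=[\phi^+,\L]$ equals its own degree-$0$ component.

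Finally I would compute that degree-$0$ component from the $\phi^+$ side: the only contributions of degree $0$ to $[\phi^+,\L]$ come from $[\phi^0,\der]$ and $[\phi^0,V]$, so it equals $[\phi^0,\der]+[\phi^0,V]=-\der(\phi^0)+[\phi^0,V]$. But $\phi^0,V\in\B(\gAA^0)$ and $\gAA^0=\h$ is abelian, so $[\phi^0,V]=0$, which leaves $\frac{\der\L}{\der t_r}=-\der(\phi^0)=-\frac d{dx}\,\phi(\La^{(2)}_r)^0$, as claimed. The only point needing real care is the degree bookkeeping in the second paragraph — in particular checking that no degree-$0$ term arising from $[\phi^{-1},\La^{(2)}]$ is mistakenly counted on the $\phi^+$ side (it is not, since $\phi^{-1}$ belongs to $\phi^-$) — but this is routine, and the argument is word-for-word the same as for the $A^{(1)}_{2n}$ statement in Lemma \ref{lem der1}.
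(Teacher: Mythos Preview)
Your argument is correct and is essentially the standard Drinfeld--Sokolov computation: commute $\phi(\La^{(2)}_r)$ with $\L$ via conjugation from $\L_0$, use the grading to localize $[\phi^+,\L]=-[\phi^-,\L]$ in degree $0$, and then read off $[\phi^0,\der+V]=-\tfrac{d}{dx}\phi^0$ using that $\h$ is abelian. The paper itself gives no proof of this lemma --- it simply cites \cite{DS} and places a \qed\ --- so there is nothing further to compare; your write-up supplies exactly the argument the citation points to.
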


	\subsection{Comparison of  mKdV equations of types $A^{(2)}_{2n}$ and $A^{(1)}_{2n}$}
	\label{sec comp}

	Consider $\g(A^{(2)}_{2n})$ as a Lie subalgebra of $\gA$, see Section \ref{sec sub}. If $\L$ is a Miura oper
	of type $A^{(2)}_{2n}$, then it  is also a Miura oper of type $A^{(1)}_{2n}$.  We have $\mc M(A^{(2)}_{2n})\subset
\mc M(A^{(1)}_{2n})$,
\bean
\label{M1}
&&
\phantom{aaa}
\mc M(A^{(1)}_{2n}) = \{\L=\der + \La^{(1)} + \sum_{i=1}^{2n+1} v_ie_{i,i}\ |\  \sum_{i=1}^{2n+1} v_i =0\},
\\
&&
\notag
\mc M(A^{(2)}_{2n}) = \{\L=\der + \La^{(1)} + \sum_{i=1}^{2n+1} v_ie_{i,i}\ |  \sum_{i=1}^{2n+1} v_i =0,\
v_j+v_{2n+2-j}=0, j=1,\dots,2n+1\}.
\eean

	\begin{lem}
		Let $r$ be odd, $r \neq 2n+1$ mod $4n+2$, $r>0$. Let $\L^{A^{(2)}_{2n}}(t_r)$ be the solution of the $r$-th mKdV equation of type $A^{(2)}_{2n}$
		with initial condition $\L^{A^{(2)}_{2n}}(0)=\L$. Let $\L^{A^{(1)}_{2n}}(t_r)$ be the solution of the $r$-th mKdV equation of type $A^{(1)}_{2n}$
		with initial condition $\L^{A^{(1)}_{2n}}(0)=\L$. Then $\L^{A^{(2)}_{2n}}(t_r)=\L^{A^{(1)}_{2n}}(t_r)$ for all values of $t_r$.
		\qed
	\end{lem}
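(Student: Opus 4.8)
The plan is to reduce the type $A^{(2)}_{2n}$ mKdV evolution to the type $A^{(1)}_{2n}$ one, for which the flows are already understood, and then observe that the reduction is compatible with both initial conditions. Concretely, suppose $\L$ is a Miura oper of type $A^{(2)}_{2n}$; by Section \ref{sec comp}, formula \Ref{M1}, $\L$ is also a Miura oper of type $A^{(1)}_{2n}$, so both mKdV equations make sense with initial value $\L$. The first step is to compare the right-hand sides of \Ref{mKdVr} and \Ref{mKdVr 3} at $t_r=0$. For this I would invoke Proposition \ref{Prop U}: choose $U=\sum_{i<0}U_i$ with $U_i\in\B(\gAA^i)$ so that $e^{\ad U}(\L)=\der+\La^{(2)}+H$ with $H_j\in\B(\zAA^j)$. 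Since the embedding $\rho:\gAA\hookrightarrow\gA$ of Section \ref{sec sub} preserves the standard grading, sends $\La^{(2)}$ to $\La^{(1)}$, and satisfies $\rho(\zAA^j)\subset\zA^j$, the very same $U$ (viewed inside $\tilde\g(A^{(1)}_{2n})$) is an admissible choice in Proposition \ref{Prop U3} for $\L$ regarded as an $A^{(1)}_{2n}$ Miura oper. Hence, for odd $r$ with $r\neq 2n+1$ mod $4n+2$, one has $\rho\big(\phi(\La^{(2)}_r)\big)=\phi(\La^{(1)}_r)$ as elements of $\tilde\g(A^{(1)}_{2n})$, because $\La^{(2)}_r$ maps to $\La^{(1)}_r$ under $\rho$ (the labeling of the $\La_r$ elements is set up so that this holds for those $r$) and $e^{-\ad U}$ intertwines the two. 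Taking the nonnegative-degree parts, $\rho\big(\phi(\La^{(2)}_r)^+\big)=\phi(\La^{(1)}_r)^+$, so the two vector fields \Ref{mKdVr} and \Ref{mKdVr 3} agree at $\L$.

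The second step is to promote this pointwise agreement of vector fields to agreement of the two solution curves. The cleanest route is to show that $\mc M(A^{(2)}_{2n})$ is an invariant submanifold for the $A^{(1)}_{2n}$ flow with index $r$ as above: if $\L(t_r)$ solves the $A^{(1)}_{2n}$ equation and starts in $\mc M(A^{(2)}_{2n})$, then $\L(t_r)\in\mc M(A^{(2)}_{2n})$ for all $t_r$. Granting this, at every time $t_r$ the point $\L(t_r)$ again lies in $\mc M(A^{(2)}_{2n})$, so by the first step the $A^{(1)}_{2n}$ vector field there equals the $A^{(2)}_{2n}$ vector field, i.e.\ $\L(t_r)$ also solves the $A^{(2)}_{2n}$ equation with the same initial condition. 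Uniqueness of solutions of the evolution equation (the flows are well defined on $\M(A^{(2)}_{2n})$ and on $\M(A^{(1)}_{2n})$ by \cite[Section 6]{DS}) then forces $\L^{A^{(2)}_{2n}}(t_r)=\L^{A^{(1)}_{2n}}(t_r)$.

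The invariance of $\mc M(A^{(2)}_{2n})$ under the $A^{(1)}_{2n}$ flow is where the real work sits, and it is the main obstacle. Using Lemma \ref{lem der1}, the $A^{(1)}_{2n}$ flow acts on $\L=\der+\La^{(1)}+\sum_i v_i e_{i,i}$ by $\partial v_i/\partial t_r=-\tfrac{d}{dx}\phi(\La^{(1)}_r)^0_{i}$, so I must check that this evolution preserves the two linear constraints cutting out $\mc M(A^{(2)}_{2n})$ inside $\mc M(A^{(1)}_{2n})$, namely $\sum_i v_i=0$ (automatic, since $\phi(\La^{(1)}_r)^0\in\B(\gA^0)$ is traceless) and the symmetry $v_j+v_{2n+2-j}=0$. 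For the symmetry constraint I would argue that when $\L$ lies in $\mc M(A^{(2)}_{2n})$ the normalizing element $U$ from Proposition \ref{Prop U} can be taken in $\B(\gAA^i)$, so $\phi(\La^{(1)}_r)^0=\rho\big(\phi(\La^{(2)}_r)^0\big)$ lies in $\rho(\B(\gAA^0))$, which is exactly the subspace of diagonal matrices satisfying $w_j+w_{2n+2-j}=0$; hence its $x$-derivative stays in that subspace and the constraint is preserved. Care is needed about the excluded indices (even $r$, and $r\equiv 2n+1$ mod $4n+2$, where $\La^{(2)}_r=0$) and about matching the two normalizations of $\La_r$; once those bookkeeping points are pinned down, the argument closes.
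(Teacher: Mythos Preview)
Your proposal is correct and follows exactly the same line as the paper: the key observation is that the element $U$ from Proposition \ref{Prop U} (with $U_i\in\B(\gAA^i)$) is, via the grading-preserving embedding $\rho$ with $\rho(\La^{(2)})=\La^{(1)}$ and $\rho(\zAA^j)\subset\zA^j$, an admissible choice in Proposition \ref{Prop U3}, so the two vector fields agree on $\mc M(A^{(2)}_{2n})$. The paper's proof is literally that one sentence; your worries about invariance of $\mc M(A^{(2)}_{2n})$ under the $A^{(1)}_{2n}$ flow and about uniqueness are legitimate logical steps, but you in fact resolve them with the same observation (the vector field lies in $\rho(\B(\gAA^0))$, hence is tangent to $\mc M(A^{(2)}_{2n})$), so there is no additional idea beyond what the paper records.
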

	
	\begin{proof}
		The element $U$ in Proposition \ref{Prop U} which is used to construct the mKdV equation of type $A^{(2)}_{2n}$ can be used also
		to construct the mKdV equation of type $A^{(1)}_{2n}$.
	\end{proof}

	\subsection{KdV equations of type $A^{(1)}_{2n}$}
	\label{sec KdV}

	Let $\Bb((\der^{-1}))$ be the algebra of formal pseudodifferential operators of the form
	$a=\sum_{i \in \Z} a_i \der^i$, with $a_i \in \Bb$ and finitely many terms with $i > 0$.
	The relations in this algebra are
	\bea
	\partial^k u - u \partial^k = \sum_{i = 1}^\infty k(k-1)\dots(k-i+1)\frac{d^i u}{dx^i}\partial^{k-i}
	\eea
	for any $k\in\Z$ and $u\in\Bb$.
	For $a = \sum_{i \in \Z} a_i \der^i \in \Bb((\der^{-1}))$, define $a^+ = \sum_{i \geq 0} a_i \der^i$.

	Denote $\Bb[\der] \subset \Bb((\der^{-1}))$ the subalgebra of differential operators $a = \sum_{i=0}^m a_i \der^i$
	with $m\in\Z_{\geq 0}$. Denote $\D \subset \Bb[\der]$ the affine subspace of differential operators of the
	form
\\
 $L=\der^{2n+1} + \sum\limits_{i=0}^{2n-1}u_i\der^{i}$.

	For $L \in \D$, there exists a unique $L^{\frac{1}{2n+1}} =\der + \sum_{i\leq 0}a_i\der^i \in \Bb((\der^{-1}))$
	such that  $(L^{\frac{1}{2n+1}})^{2n+1} = L$. For $r\in\N$, we have $L^{\frac{r}{2n+1}} = \der^r + \sum^{r-1}_{i=-\infty}
	b_i\der^i$, $b_i\in \B$.
	We set $(L^{\frac{r}{2n+1}})^+ = \der^r + \sum^{r-1}_{i=0} b_i\der^i$.

	For $r\in \N$, the differential equation
	\beq
	\label{KdVr}
	\frac{\partial L}{\partial t_r} = [L,(L^{\frac{r}{2n+1}})^+]
	\eeq
	is called the  {\it  $r$-th  KdV equation} of type $A^{(1)}_{2n}$.

	Equation \Ref{KdVr} defines flows $\frac{\partial}{\partial t_r}$ on the space $\D$.
	For all $r,s \in \N$ the flows $\frac{\partial}{\partial t_r}$ and $\frac{\partial}{\partial t_s}$ commute, see \cite{DS}.

	\subsection{Miura maps}
	\label{sec Miura maps}

	Let $\Ll = \der + \Lambda^{(1)} + V$ be a Miura oper of type $A^{(1)}_{2n}$ with $V = \sum_{k = 1}^{2n+1} v_k e_{k,k}$, $\sum_{k=1}^{2n+1}v_k=0$.  For $i=0,\dots,2n+1$, define
	the scalar differential operator $L_i=\der^{2n+1}+\sum_{j=0}^{2n-1}u_{j,i}\der^{j}\in\D$ by the formula:
	\bean
\label{miuramap}
	&
	L_0 =L_{2n+1}= (\der - v_{2n+1})(\der - v_{2n})\dots(\der - v_{2})(\der - v_1),
	\\
\notag
	&
	L_i = (\der - v_i)(\der - v_{i-1})\dots(\der - v_1)(\der - v_{2n+1})\dots(\der - v_{i+2})(\der - v_{i+1}),
	\eean
for $i=1,\dots,2n$.
	
	\begin{thm}[{\cite[Proposition 3.18]{DS}}]
		\label{thm mkdvtokdv}
		Let a Miura oper $\Ll$ satisfy the mKdV equation \Ref{mKdVr 3} for some $r$.  Then for every $i=0,\dots,2n$ the differential operator
		$L_i$ satisfies the KdV equation \Ref{KdVr}.
	\end{thm}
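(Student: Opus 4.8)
The plan is to reduce the $A^{(1)}_{2n}$ mKdV flow on the Miura oper $\Ll$ to a pseudodifferential statement about the scalar operators $L_i$, exploiting the factorization in \Ref{miuramap}. First I would fix $i$ and work with the factorization $L_i = (\der - v_i)\dotsm(\der - v_1)(\der - v_{2n+1})\dotsm(\der - v_{i+1})$. The $(2n+1)$-st root $L_i^{1/(2n+1)}$ is then conjugate, in the algebra $\Bb((\der^{-1}))$, to the single first-order factor sitting at the appropriate cyclic position; more precisely, the key algebraic fact from \cite{DS} is that the matrix first-order operator $\der + \La^{(1)} + V$ is gauge-equivalent (by a lower-triangular unipotent transformation with parameter) to the companion-type operator whose scalar reduction is $L_i$, and under this equivalence the distinguished element $\phi(\La^{(1)}_r)^+$ of Section \ref{sec MKDV} corresponds precisely to $(L_i^{r/(2n+1)})^+$ acting by commutator. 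So the first step is to set up this dictionary carefully: express $L_i^{r/(2n+1)}$ in terms of the resolved Miura oper $\L_0 = \der + \La^{(1)} + H$ of Proposition \ref{Prop U3}, using that $(\La^{(1)})^r$ is, up to the diagonal conjugation of Lemma 2.x, the scalar $\der^r$ after passing to the companion realization.

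The second step is the computation of the flow. I would differentiate $L_i = (\der - v_i)\dotsm(\der - v_{i+1})$ with respect to $t_r$ using the mKdV equation \Ref{mKdVr 3}, which by Lemma \ref{lem der1} is equivalent to $\der v_k/\der t_r = -\frac d{dx}(\text{appropriate diagonal entry of }\phi(\La^{(1)}_r)^0)$. Substituting termwise, $\der L_i/\der t_r = -\sum_k (\der - v_i)\dotsm (\dot v_k)\dotsm (\der - v_{i+1})$, and I would recognize the right-hand side as a commutator $[L_i, (L_i^{r/(2n+1)})^+]$ by matching it against the general identity that, for a product of first-order factors, $[\prod(\der - v_k), P^+]$ with $P = L_i^{r/(2n+1)}$ produces exactly such a sum with the $\dot v_k$ given by residues/diagonal components of $P$. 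This is where the precise normalization of $\phi(\La^{(1)}_r)^+$ versus $(L_i^{r/(2n+1)})^+$ has to be reconciled — the fact that $\phi$ is defined via $e^{-\ad U}$ from the resolved form guarantees it is the "dressed" analogue of $(\La^{(1)})^r$, which is the matrix avatar of $L_i^{r/(2n+1)}$.

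The main obstacle I expect is bookkeeping: showing that the single automorphism $U$ (and hence the single element $\phi(\La^{(1)}_r)$) simultaneously governs the flows of all $2n+1$ scalar operators $L_0, \dots, L_{2n}$, which differ by cyclic rotation of the factors. The clean way around this is to observe that cyclically permuting the factors $(\der - v_k)$ corresponds to conjugating the matrix Miura oper by a fixed permutation-type element (shifting the $e_{i,i}$ along the diagonal), under which $\La^{(1)}$ and the mKdV hierarchy are equivariant; hence it suffices to prove the statement for one value of $i$, say $i = 0$ (or $i = 2n+1$), and then transport it. Once the $i=0$ case is established by the residue computation above — essentially the content of \cite[Proposition 3.18]{DS} — the general case follows formally. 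I would therefore present the $i=0$ computation in detail and dispatch the others by the cyclic-symmetry remark.
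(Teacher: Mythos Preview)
The paper contains no proof of this theorem: it is stated with attribution to \cite[Proposition 3.18]{DS} and used as a black box throughout. Your proposal, by contrast, sketches the actual Drinfeld--Sokolov argument.

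The outline is broadly faithful to \cite{DS}: the identification of $\phi(\La^{(1)}_r)^+$ with $(L_i^{r/(2n+1)})^+$ via gauge equivalence of $\der + \La^{(1)} + V$ to companion form, followed by termwise differentiation of the factored $L_i$ and recognition of the result as a commutator, is indeed how the result is established there, and the cyclic-symmetry reduction to a single value of $i$ is legitimate. The step you yourself flag as the obstacle --- reconciling the matrix dressing $e^{-\ad U}$ with the scalar pseudodifferential $(2n+1)$-st root --- is precisely where the technical content of \cite[\S3]{DS} lives and would need to be carried out rather than asserted; but as a plan the proposal is sound. Since the paper itself offers no argument, there is nothing further to compare.
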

	
	For $i=0,\dots,2n+1$, we define the {\it  $i$-th Miura map} by the formula
	\bea
	\frak{ m}_i \ : \ \mc M(A^{(1)}_{2n})\  \to \ \D,
	\quad
	\Ll \ \mapsto \ L_i,
	\eea
	see \Ref{miuramap}.  
	
	For $i=0,1,\dots,2n$, an {\it $i$-oper } is a differential operator of the form
	\bea
	\Ll = \der + \Lambda^{(1)} + V + W,
	\eea
	with $V \in \Bb(\gA^0)$ and $W \in \Bb(\n^-_i)$.
	For $w \in \Bb(\n^-_i)$ and an $i$-oper $\Ll$, the differential operator
	$e^{\text{ad} \, w}(\L)$
	is an $i$-oper.  The $i$-opers $\Ll$ and $e^{\text{ad} \, w} (\Ll)$ are called {\it $i$-gauge equivalent.}
	A Miura oper is an $i$-oper for any $i$.

	\begin{thm}
		[{\cite[Proposition 3.10]{DS}}]
		\label{thm gaugemiura}
		If Miura opers $\Ll$ and $\tilde \Ll$ are $i$-gauge equivalent, then
		$\frak m_i(\Ll) = \frak m_i(\tilde\Ll)$.	
\qed

\end{thm}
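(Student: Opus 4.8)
The plan is to extract the $i$-th Miura image intrinsically from the linear system attached to $\Ll$ and to see that this extraction depends only on data fixed by an $i$-gauge transformation. First I would reduce to $i=0$: conjugation by a suitable product of the cyclic matrix shift $e_{k,l}\mapsto e_{k+1,l+1}$ with a diagonal matrix of powers of $\la$ preserves $\M(A^{(1)}_{2n})$, sends $\n_i^-$ onto $\n_0^-$, and intertwines $\frak m_i$ with $\frak m_0$, so the general case reduces to $i=0$. In the matrix realization $\n_0^-$ is the algebra of strictly upper triangular matrices, so a $0$-oper is $\Ll=\der+\La^{(1)}+V+W$ with $V=\sum_{k=1}^{2n+1}v_ke_{k,k}$, $\sum_kv_k=0$, and $W\in\B(\n_0^-)$.

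Next I would make $\frak m_0$ explicit by eliminating variables in the covector equation $\psi'=\psi(\La^{(1)}+V+W)$, $\psi=(\psi_1,\dots,\psi_{2n+1})$. Strict upper-triangularity of $W$ makes its $k$-th component, for $k\le 2n$, read $\psi_k'=\psi_{k+1}+v_k\psi_k+\sum_{l<k}\psi_lW_{l,k}$, which recursively expresses $\psi_{k+1}$ as a differential polynomial in $\psi_1$ of order $k$ with leading part $(\der-v_k)\cdots(\der-v_1)\psi_1$; substituting these into the last component $\psi_{2n+1}'=\la\psi_1+v_{2n+1}\psi_{2n+1}+\sum_{l<2n+1}\psi_lW_{l,2n+1}$ gives
\be
L_0(\Ll)\psi_1=\la\psi_1,\qquad L_0(\Ll)=\der^{2n+1}+\sum_{j=0}^{2n-1}u_j\der^j\in\D ,
\ee
the coefficient of $\der^{2n}$ being $-\sum_kv_k=0$ while $W$ contributes only in orders $\le 2n-1$. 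When $W=0$ this is exactly formula \Ref{miuramap}, so $\frak m_0(\Ll)=L_0(\Ll)$ for Miura opers; moreover every solution of $L_0(\Ll)-\la$ arises as the first component $\psi_1$ of a uniquely determined covector solution of $\Ll$, so these first components fill the whole $(2n+1)$-dimensional solution space of $L_0(\Ll)-\la$.

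Then I would verify invariance. For $w\in\B(\n_0^-)$ one has $e^{\ad w}(\Ll)=e^w\Ll e^{-w}$ in the matrix realization, and since the commutators of strictly upper triangular matrices with $\La^{(1)}$, $V$, $W$ and with $\der$ are again upper triangular and traceless, $\tilde\Ll:=e^{\ad w}(\Ll)$ is a $0$-oper with $\sum_k\tilde v_k=0$; hence $L_0(\tilde\Ll)\in\D$ is defined as above. Its covector solutions are $\tilde\psi=\psi e^{-w}$, and unipotence of the upper triangular matrix $e^{-w}$ forces $\tilde\psi_1=\psi_1$. Therefore, working with local analytic (equivalently formal) solutions near an arbitrary point so that the relevant solution spaces exist, $L_0(\tilde\Ll)-\la$ annihilates every such $\psi_1$, i.e. the entire $(2n+1)$-dimensional solution space of $L_0(\Ll)-\la$. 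Since $L_0(\Ll)$ and $L_0(\tilde\Ll)$ are both monic of order $2n+1$ with vanishing $\der^{2n}$-coefficient, their difference has order $\le 2n-1$ and kills $2n+1$ linearly independent functions, hence vanishes. Thus $\frak m_0(\Ll)=L_0(\Ll)=L_0(\tilde\Ll)=\frak m_0(\tilde\Ll)$, and the reduction of the first paragraph yields $\frak m_i(\Ll)=\frak m_i(\tilde\Ll)$ for every $i$.

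I expect the only subtle point to be bookkeeping: one must use the \emph{covector} equation and follow the right scalar component ($\psi_1$) so that the extracted operator matches the ordering and signs of \Ref{miuramap}, and one must pass to a local model so that the class $\B$ of functions supplies the needed $(2n+1)$-dimensional solution spaces. Once those conventions are pinned down the rest is routine; alternatively, this is precisely \cite[Proposition 3.10]{DS}.
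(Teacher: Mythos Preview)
Your argument is correct and is essentially the classical Drinfeld--Sokolov proof. Note, however, that the paper itself does not give any proof of this statement: it is recorded with an immediate \qed\ and simply attributed to \cite[Proposition~3.10]{DS}. So there is nothing in the paper to compare your argument against beyond the citation; what you have written is a faithful reconstruction of the cited result, correctly adapted to the conventions of this paper (the realization of $\Lambda^{(1)}$, the description of $\n_0^-$ as strictly upper triangular matrices, and the factorization order in \Ref{miuramap}). Your care in choosing the covector equation so that the gauge-fixed component is $\psi_1$ and the resulting scalar operator matches \Ref{miuramap} with the correct sign and ordering is exactly the bookkeeping point that needs attention, and you handled it properly.
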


	\section{Tangent maps to Miura maps}
\label{tmaps}

\subsection{Tangent spaces}
Consider the spaces of Miura opers $\mc M(A^{(2)}_{2n})\subset
\mc M(A^{(1)}_{2n})$. The tangent space to $\mc M(A^{(2)}_{2n})$ at a point $\L$ is
\bean
\label{TM}
{}
\\
\notag
&&
T_\L \mc M(A^{(2)}_{2n}) = \{X=\sum_{i=1}^{2n+1}X_ie_{i,i}\ |\
\sum_{i=1}^{2n+1}X_i =0, \ X_j+X_{2n+2-j}=0,\ j=1,\dots, 2n+1\},
\eean
where $X_i$ are functions of variable $x$.
	Recall $\D = \{  L=\der^{2n+1} + \sum\limits_{i=0}^{2n-1}u_i\der^{i}\}$.
The tangent space to $\D$ at a point $D$ is $T_D\D=\{Z= \sum_{i=0}^{2n-1}Z_i \der^{i}\}$,
where $Z_i$ are functions of $x$.

Consider the restrictions of Miura maps to $\mc M(A^{(2)}_{2n})$ and the corresponding tangent maps
\bean
\label{Tmap}
d\frak m_i : T_\L \mc M(A^{(2)}_{2n}) \to T_{\frak m_i(\L)} \D,  \quad i=1,\dots,2n+1.
\eean
By definition, if  $\L=\der +\La^{(1)}+\sum_{i=1}^{2n+1}v_ie_{i,i} \in \mc M(A^{(2)}_{2n})$, $X=\sum_{i=1}^{2n+1}X_ie_{i,i}\in T_\L \mc M(A^{(2)}_{2n})$, $d\frak m_i (X) = Z^i= \sum_{j=0}^{2n-1}Z_j^i \der^{j}$,
then
 \bean
	 \label{dif zero}
 Z^i &	& =  (-X_i)(\der-v_{i-1})\dots(\der-v_1)(\der-v_{2n+1})\dots(\der-v_{i+1})
\\
	 \notag
&&	 +\ (\der-v_i)(-X_{i-1})\dots(\der-v_1)(\der-v_{2n+1})\dots(\der-v_{i+1})+\dots
\\
	 \notag
&&	 +\ (\der-v_i)(\der-v_{i-1})\dots(-X_1)(\der-v_{2n+1})\dots(\der-v_{i+1})
\\
	 \notag
&&	 +\ (\der-v_i)(\der-v_{i-1})\dots(\der-v_1)(-X_{2n+1})\dots(\der-v_{i+1})+\dots
\\
	 \notag
&&	 +\ (\der-v_i)(\der-v_{i-1})\dots(\der-v_1)(\der-v_{2n+1})\dots(-X_{i+1}).
	 \eean

 In what follows we study the 
intersection of kernels of these tangent maps when $i$ runs through certain subsets of 
$\{1,\dots,2n+1\}$.
	
\subsection{Formula for the first  coefficient}

\begin{prop}
Let $\L=\der +\La^{(1)}+\sum_{i=1}^{2n+1}v_ie_{i,i} \in \mc M(A^{(1)}_{2n})$, $X=\sum_{i=1}^{2n+1}X_ie_{i,i}
$ $\in T_\L \mc M(A^{(2)}_{2n})$, $d\frak m_i (X) = Z^i= \sum_{j=0}^{2n-1}Z_j^i \der^{j}$. Then
\bean
\label{1st coeff}
Z_{2n-1}^i = -\left(\sum_{k=1}^{2n+1}v_kX_k+\sum_{k=1}^{i}(i-k)X'_{k} +\sum_{k=i+1}^{2n+1}(i+2n+1-k)X'_{k}
\right).
\eean
\end{prop}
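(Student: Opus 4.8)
The plan is to extract the coefficient directly from the explicit expansion \Ref{dif zero} of $Z^i=d\frak m_i(X)$ as a sum of $2n+1$ scalar differential operators of order $2n$, computing the $\der^{2n-1}$-coefficient summand by summand. First I would record the cyclic relabeling implicit in \Ref{miuramap}: write $L_i=(\der-v_{c(1)})\cdots(\der-v_{c(2n+1)})$ with $c(s)=i+1-s$ for $1\le s\le i$ and $c(s)=2n+2+i-s$ for $i<s\le 2n+1$, so that $c$ is a bijection of $\{1,\dots,2n+1\}$. Then the $s$-th summand of \Ref{dif zero} is $T_s=A_s\,(-X_{c(s)})\,B_s$, where $A_s=(\der-v_{c(1)})\cdots(\der-v_{c(s-1)})$ has order $s-1$ and $B_s=(\der-v_{c(s+1)})\cdots(\der-v_{c(2n+1)})$ has order $2n+1-s$.

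Next I would use two elementary facts: a product $(\der-a_1)\cdots(\der-a_p)$ equals $\der^p-(a_1+\dots+a_p)\der^{p-1}+(\text{lower order})$, and $\der^p\circ f=\sum_{j\ge0}\binom pj f^{(j)}\der^{p-j}$ for any function $f$. Expanding $T_s=A_s(-X_{c(s)})B_s$ and keeping only orders $2n$ and $2n-1$, exactly three contributions to the coefficient of $\der^{2n-1}$ survive: the first-derivative term created when the leading part $\der^{s-1}$ of $A_s$ is commuted past the multiplication operator $-X_{c(s)}$, equal to $-(s-1)X'_{c(s)}$; the subleading coefficient $-\sum_{t<s}v_{c(t)}$ of $A_s$ times $-X_{c(s)}$ times the leading term of $B_s$; and the leading term of $A_s$ times $-X_{c(s)}$ times the subleading coefficient $-\sum_{t>s}v_{c(t)}$ of $B_s$. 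Collecting them,
\[
[\der^{2n-1}]\,T_s=-(s-1)X'_{c(s)}+X_{c(s)}\Bigl(\,\sum_{k=1}^{2n+1}v_k-v_{c(s)}\Bigr),
\]
which by the Miura-oper relation $\sum_k v_k=0$ from \Ref{M1} equals $-(s-1)X'_{c(s)}-v_{c(s)}X_{c(s)}$. (As a sanity check, the $\der^{2n}$-coefficient of $T_s$ is $-X_{c(s)}$, and $\sum_s(-X_{c(s)})=-\sum_kX_k=0$, so $Z^i$ has order at most $2n-1$, consistently with \Ref{dif zero}.)

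Finally I would sum over $s=1,\dots,2n+1$. Since $c$ is a bijection, $\sum_s v_{c(s)}X_{c(s)}=\sum_k v_kX_k$. For the derivative terms, substituting $s-1=i-c(s)$ when $s\le i$ and $s-1=2n+1+i-c(s)$ when $s>i$, and relabeling $k=c(s)$, turns $\sum_s(s-1)X'_{c(s)}$ into $\sum_{k=1}^i(i-k)X'_k+\sum_{k=i+1}^{2n+1}(i+2n+1-k)X'_k$. Putting the two pieces together under the common minus sign yields exactly \Ref{1st coeff}. The computation itself is routine; the main obstacle is bookkeeping — isolating the three sources of the $\der^{2n-1}$ term correctly, and keeping the cyclic relabeling $c$ straight when converting a sum over factor positions $s$ into a sum over the original diagonal indices $k$.
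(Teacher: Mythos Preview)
Your proposal is correct and is exactly the straightforward computation the paper alludes to: the paper's own proof is the single sentence ``The proof uses only the identity $\sum_{j=1}^{2n+1}v_j=0$ and is straightforward,'' and your write-up supplies precisely those details, using that identity in the same place (to simplify $\sum_{t\ne s}v_{c(t)}$) and nothing more.
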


\begin{proof} The proof uses only the identity $\sum_{j=1}^{2n+1}v_j=0$ and is straightforward.
\end{proof}

\subsection{Intersection of kernels of $d\frak m_i$}

\begin{lem}
\label {lem 6.10}

Let $\L=\der +\La^{(1)}+\sum_{k=1}^{2n+1}v_k e_{k,k} \in \mc M(A^{(2)}_{2n})$, $X=\sum_{k=1}^{2n+1}X_ke_{k,k}
$ $\in T_\L \mc M(A^{(2)}_{2n})$, $d\frak m_i (X) = Z^i= \sum_{j=0}^{2n-1}Z_j^i \der^{j}$. Assume that
$Z^i_{2n-1}=0$ for $i=1,\dots,2n$, then
\bean
\label{eqn j0}
X'_1-2v_{1}X_{1}=\sum_{k=2}^{2n}v_kX_k,
\qquad
X_i'=0, \quad i=2,\dots, 2n.
\eean
\end{lem}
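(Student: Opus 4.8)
The plan is to treat the $2n$ hypotheses $Z^i_{2n-1}=0$, $i=1,\dots,2n$, via the explicit formula \Ref{1st coeff} as a linear system in the functions $X'_1,\dots,X'_{2n+1}$ together with the single quantity $S:=\sum_{k=1}^{2n+1}v_kX_k$, which occurs with the same value in every one of the equations. The decisive step is to subtract consecutive equations: forming $Z^{i+1}_{2n-1}-Z^i_{2n-1}$ cancels $S$, and, tracking the coefficients $(i-k)$ in the first sum of \Ref{1st coeff} against $(i+2n+1-k)$ in the second sum, one finds that the difference collapses to $\sum_{k=1}^{2n+1}X'_k-(2n+1)X'_{i+1}$.

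First I would record that, since $X\in T_\L\mc M(A^{(2)}_{2n})$, we have $\sum_{k=1}^{2n+1}X_k=0$ and hence $\sum_{k=1}^{2n+1}X'_k=0$. Then the relations $Z^{i+1}_{2n-1}-Z^i_{2n-1}=0$ for $i=1,\dots,2n-1$ become simply $(2n+1)X'_{i+1}=0$, which gives $X'_k=0$ for $k=2,\dots,2n$. This is already the second assertion of the lemma.

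Next I would substitute $X'_2=\dots=X'_{2n}=0$ back into a single surviving equation, most conveniently $Z^1_{2n-1}=0$, which reads $S+\sum_{k=2}^{2n+1}(2n+2-k)X'_k=0$; all but the $k=2n+1$ term vanish, leaving $S+X'_{2n+1}=0$, so $X'_{2n+1}=-\sum_{k=1}^{2n+1}v_kX_k$. Finally I would invoke the defining constraints of $\mc M(A^{(2)}_{2n})$, namely $X_1+X_{2n+1}=0$ and $v_1+v_{2n+1}=0$: the first yields $X'_{2n+1}=-X'_1$, while in $\sum_{k=1}^{2n+1}v_kX_k=v_1X_1+v_{2n+1}X_{2n+1}+\sum_{k=2}^{2n}v_kX_k$ the two outer terms combine to $2v_1X_1$. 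Rearranging gives $X'_1-2v_1X_1=\sum_{k=2}^{2n}v_kX_k$, the first assertion.

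I do not expect a genuine obstacle: once \Ref{1st coeff} is in hand the whole argument is a short piece of linear algebra. The only places requiring care are the index bookkeeping in the difference of the two partial sums of \Ref{1st coeff} — the term $k=i+1$ carries coefficient $2n$ in $Z^i_{2n-1}$ but coefficient $0$ in $Z^{i+1}_{2n-1}$, and this $-2n$ combines with the constant $+1$'s coming from all the other indices to produce the clean coefficient $-(2n+1)$ — and remembering to use the $A^{(2)}_{2n}$-symmetry relations at the very end to trade the index-$(2n+1)$ quantities for the index-$1$ ones.
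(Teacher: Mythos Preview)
Your proof is correct and follows essentially the same approach as the paper's: subtract consecutive equations $Z^{i+1}_{2n-1}-Z^{i}_{2n-1}=0$ to isolate $(2n+1)X'_{i+1}$, use $\sum_k X'_k=0$ to kill the derivatives $X'_2,\dots,X'_{2n}$, then substitute into the $i=1$ equation and invoke the $A^{(2)}_{2n}$ symmetry $X_{2n+1}=-X_1$, $v_{2n+1}=-v_1$ to obtain the first relation. The only cosmetic difference is that the paper writes out the full system explicitly before taking differences, whereas you work directly from formula \Ref{1st coeff}.
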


	\begin{proof}
		By assumption we have the system of equations	
\bean
	\label{dif coeff}
&&
	X'_{2n-1}+2X'_{2n-2}+\dots+(2n-1)X'_{1}+2nX'_{2n+1}+\sum_{k=1}^{2n+1}v_kX_k=0,
\\
	\notag
&&
	X'_{2n-2}+2X'_{2n-3}+\dots+(2n-1)X'_{2n+1}+2nX'_{2n}+\sum_{k=1}^{2n+1}v_kX_k=0,\\
	\notag
&&
	X'_{2n-3}+2X'_{2n-4}+\dots+(2n-1)X'_{2n}+2nX'_{2n-1}+\sum_{k=1}^{2n+1}v_kX_k=0,\\
	\notag
&&
	\dots 
\\
	\notag
&&
	X'_{1}+2X'_{2n+1}+\dots+(2n-1)X'_{4}+2nX'_{3}+\sum_{k=1}^{2n+1}v_kX_k=0,\\
	\notag
&&
	X'_{2n+1}+\dots+(2n-1)X'_{3}+2nX'_{2}+\sum_{k=1}^{2n+1}v_kX_k=0.
	\eean
By subtracting the first equation from the second we get
$	2nX'_{2n}-X'_{2n-1}-X'_{2n-2}-\dots-X'_{1}-X'_{2n+1}=0$,
	equivalently
$	(2n+1)X'_{2n}-\sum_{k=1}^{2n+1}X'_k=0$.  
	Since $\sum_{k=1}^{2n+1}X_k=0$, we get $X'_{2n} = 0$. By subtracting the second from the third we get $X'_{2n-1}=0$. Similarly we obtain
	\bean
	\label{diff res0}
	X'_i = 0,
\quad i=2,\dots,2n.
	\eean
	Applying (\ref{diff res0}) to the last equation in (\ref{dif coeff}) yields 
$	X'_{2n+1}+\sum_{k=1}^{2n+1}v_kX_k=0$.
	By pulling out the terms for $k=1,2n+1$ we obtain
$X'_{2n+1}+v_1X_1+v_{2n+1}X_{2n+1}=-(X'_1-2v_{1}X_{1})=-\sum_{k=2}^{2n}v_kX_k$.
	\end{proof}

\begin{lem}
\label {lem 6.16}
Let $j\in \{1,\dots,n-1\}$.
Let $\L=\der +\La^{(1)}+\sum_{k=1}^{2n+1}v_k e_{k,k} 
\in \mc M(A^{(2)}_{2n})$, $X=\sum_{k=1}^{2n+1}X_ke_{k,k}
$ $\in T_\L \mc M(A^{(2)}_{2n})$, $d\frak m_i (X) = Z^i= \sum_{j=0}^{2n-1}Z_j^i \der^{j}$. Assume that
$Z^i_{2n-1}=0$ for all $i\notin\{ j, 2n+1-j\}$, then
\bea
					X'_{j}+v_{j}X_{j}+v_{j+1}X_{j+1}
					=-\sum_{k=1,k\neq j,j+1}^{n}v_kX_k,
\qquad  X'_j+X'_{j+1}=0,
\qquad  X'_i =0
					\eea
for $i\notin \{j, j+1, 2n+1-j, 2n+2-j\}$.
\end{lem}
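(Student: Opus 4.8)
The plan is to follow the template of the proof of Lemma~\ref{lem 6.10}. By formula~\Ref{1st coeff}, each hypothesis $Z^i_{2n-1}=0$ gives a single linear equation, call it $E_i$, in the functions $X'_1,\dots,X'_{2n+1}$, with inhomogeneous term $\sum_{k=1}^{2n+1}v_kX_k$; here $i$ ranges over $\{1,\dots,2n+1\}\setminus\{j,2n+1-j\}$, and for $i=2n+1$ one uses $\frak m_{2n+1}=\frak m_0$, so that the equation $E_{2n+1}=E_0$ is among those available. Using $\sum_{k=1}^{2n+1}X_k=0$, hence $\sum_k X'_k=0$, one rewrites $E_i$ as
\be
\sum_{k=1}^{2n+1}kX'_k-(2n+1)\sum_{k=i+1}^{2n+1}X'_k=\sum_{k=1}^{2n+1}v_kX_k.
\ee
Therefore the difference of two available equations $E_i,E_{i'}$ with $i'<i$ is the purely homogeneous relation $\sum_{k=i'+1}^{i}X'_k=0$; in particular $E_{i+1}-E_i$ forces $X'_{i+1}=0$ whenever both $i$ and $i+1$ are available.

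First I would run this difference argument through the surviving indices. Viewed on $\Z/(2n+1)$ with $2n+1\equiv 0$, the two removed indices $j$ and $2n+1-j$ split the available set into two arcs of consecutive integers; forming $E_{i+1}-E_i$ for every adjacent pair inside an arc kills $X'_{i+1}$, and carrying this out over both arcs gives $X'_m=0$ for every $m\in\{1,\dots,2n+1\}\setminus\{j,j+1,2n+1-j,2n+2-j\}$, which is the third assertion. The four indices $j,j+1,2n+1-j,2n+2-j$ are left out because for each of them one of the two consecutive equations $E_{m-1},E_m$ needed to produce $X'_m=0$ is missing.

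Next, the two ``jumps'' across the removed indices supply the remaining homogeneous relations. The available equations $E_{j-1}$ and $E_{j+1}$ flank the gap at $j$, and their difference collapses (again by $\sum_k X'_k=0$) to $(2n+1)(X'_j+X'_{j+1})=0$, that is $X'_j+X'_{j+1}=0$, the second assertion. (The jump across $2n+1-j$ likewise gives $X'_{2n+1-j}+X'_{2n+2-j}=0$, which is in any case implied by the second assertion together with the $A^{(2)}_{2n}$-symmetry $X_i+X_{2n+2-i}=0$.)

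Finally, for the first assertion I would feed everything above into the one equation $E_0=E_{2n+1}$ not yet used in a difference, which reads $\sum_{k=1}^{2n+1}kX'_k=\sum_{k=1}^{2n+1}v_kX_k$. On the left only $X'_j,X'_{j+1},X'_{2n+1-j},X'_{2n+2-j}$ survive; plugging in $X'_{j+1}=-X'_j$ together with the symmetry relations $X'_{2n+1-j}=X'_j$, $X'_{2n+2-j}=-X'_j$, the left-hand side collapses to $-2X'_j$, while on the right $\sum_{k=1}^{2n+1}v_kX_k=2\sum_{k=1}^{n}v_kX_k$ by the symmetries $v_i+v_{2n+2-i}=0$ and $X_i+X_{2n+2-i}=0$ (in particular $v_{n+1}=0$). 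Hence $X'_j=-\sum_{k=1}^{n}v_kX_k$, which is exactly $X'_j+v_jX_j+v_{j+1}X_{j+1}=-\sum_{k=1,k\neq j,j+1}^{n}v_kX_k$. The only real difficulty is bookkeeping: one must correctly determine which of the $2n-1$ equations survive and how the surviving indices form two arcs on $\Z/(2n+1)$ --- in particular the equation $E_0=E_{2n+1}$ must be retained, since without it $X'_1$ is left undetermined --- and check the edge case $j=1$, where one arc reduces to the single index $2n+1$. Each individual computation is then elementary, of the same type as those in the proof of Lemma~\ref{lem 6.10}.
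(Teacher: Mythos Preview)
Your proof is correct and follows essentially the same approach as the paper: you subtract consecutive available equations $E_{i+1}-E_i$ to kill the $X'_m$ away from the two gaps, use a two-step difference across a gap to obtain $X'_j+X'_{j+1}=0$, and then substitute into the single equation $E_{2n+1}$ to extract the inhomogeneous relation. The only cosmetic difference is that the paper takes the two-step difference across the gap at $2n+1-j$ (getting $X'_{2n+1-j}+X'_{2n+2-j}=0$ first and then invoking symmetry), whereas you take it across the gap at $j$; the two are equivalent.
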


\begin{proof}
By assumption we have the system of equations
			\bea
&&			X'_{2n}+2X'_{2n-1}+\dots+(2n-1)X'_{2}+2nX'_{1}+\sum_{k=1}^{2n+1}v_kX_k=0,\\
			\notag
	&&		X'_{2n-1}+2X'_{2n-2}+\dots+(2n-1)X'_{1}+2nX'_{2n+1}+\sum_{k=1}^{2n+1}v_kX_k=0,\\
			\notag
		&&	\dots \\
		&&	\notag
			X'_{2n+1-j}+\dots+(2n+1-j)X'_{1}+(2n+2-j)X'_{2n+1}+\dots
\\
&&
\phantom{aaaaaaaaaaaaaaaaaaaaaaaaaaaaaaaaaaaa}
\dots +2nX'_{2n+3-j}+\sum_{k=1}^{2n+1}v_kX_k=0,\\
		&&	\notag
			X'_{2n-1-j}+\dots+(2n-1-j)X'_{1}+(2n-j)X'_{2n+1}+\dots+2nX'_{2n+1-j}+\sum_{k=1}^{2n+1}v_kX_k=0,\\
		\notag
	&&		\dots \\
		&&	\notag
			X'_{j}+\dots+j X'_{1}+(j+1)X'_{2n+1}+\dots+2nX'_{j+2}+\sum_{k=1}^{2n+1}v_kX_k=0,\\
		&&	\dots 
			\eea
			Subtracting the second line from the first gives  $ X'_{2n+1}=0$,
cf. the proof of Lemma \ref{lem 6.10}.
Similarly, for $i\notin\{ j,j+1,2n+1-j,2n+2-j\}$  considering the difference $Z^{i-1}_{2n-1} - Z^i_{2n-1}=0$
we obtain $X'_i=0$.

Considering the difference  $Z^{2n+2-j}_{2n-1} - Z^{2n-j}_{2n-1}=0$ we obtain
 \bea
&&			X'_{2n+1-j}+\dots+(2n+1-j)X'_{1}+(2n+2-j)X'_{2n+1}+\dots+2nX'_{2n+3-j}
+\sum_{k=1}^{2n+1}v_kX_k
\\
&&			-\Big(X'_{2n-1-j}+\dots+(2n-1-j)X'_{1}+(2n-j)X'_{2n+1}+\dots+2nX'_{2n+1-j}
+\sum_{k=1}^{2n+1}v_kX_k\Big)
\\
	&&		=-(2n+1)(X'_{2n+1-j}+X'_{2n+2-j})+2\sum_{k=1}^{2n+1}X'_k=0.
			\eea
Hence $X'_{2n+1-j}+X'_{2n+2-j}=0$ and $X'_{j}+X'_{j+1}=0$.
			Now we can rewrite equation $Z^{2n+1}_{2n-1}=0$ as
	\bea
			(j-1)X'_{2n+2-j}+jX'_{2n+1-j}+(2n-j)X'_{j+1}+(2n+1-j)X'_{j}+\sum_{k=1}^{2n+1}v_kX_k=0.
			\eea
			Or equivalently
			\bea
			2X'_{j}+\sum_{k=1}^{2n+1}v_kX_k =
			2X'_{j}+2\sum_{k=1}^{n}v_kX_k = 
		2(	X'_{j}+v_{j}X_{j}+v_{j+1}X_{j+1}+\sum_{k=1,k\neq j,j+1}^{n}v_kX_k) =0.
			\eea
			\end{proof}

\begin{lem}
\label {lem 6.20}
Let $\L=\der +\La^{(1)}+\sum_{k=1}^{2n+1}v_k e_{k,k} 
\in \mc M(A^{(2)}_{2n})$, $X=\sum_{k=1}^{2n+1}X_ke_{k,k}
$ $\in T_\L \mc M(A^{(2)}_{2n})$, $d\frak m_i (X) = Z^i= \sum_{j=0}^{2n-1}Z_j^i \der^{j}$. Assume that
$Z^i_{2n-1}=0$ for all $i\notin\{ n, n+1\}$, then
\bea
					X'_{n}+v_{n}X_{n}=-\sum_{k=1}^{n-1}v_kX_k,
\qquad  X'_i =0, \quad i\notin \{n, n+2\}.
					\eea
\end{lem}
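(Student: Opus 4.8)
The plan is to run the scheme of the proofs of Lemmas~\ref{lem 6.10} and~\ref{lem 6.16}, now adapted to the middle node $i=n$. The new feature is that \emph{both} indices $n$ and $n+1$ are omitted from the available equations, so the two differences that in Lemma~\ref{lem 6.16} force $X'_j+X'_{j+1}=0$ are not at our disposal, and $X'_n$ genuinely survives. Throughout I would use the constraints defining $T_\L\mc M(A^{(2)}_{2n})$ — namely $\sum_{k=1}^{2n+1}X_k=0$ and $X_k+X_{2n+2-k}=0$ for all $k$, so in particular $X_{n+1}=0$ — together with the matching identities $\sum_{k=1}^{2n+1}v_k=0$, $v_k+v_{2n+2-k}=0$, $v_{n+1}=0$ for $\L\in\mc M(A^{(2)}_{2n})$.

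\textbf{Step 1: killing most $X'_k$.} By formula~\Ref{1st coeff}, the hypothesis says that, for each $i\in\{1,\dots,n-1\}\cup\{n+2,\dots,2n+1\}$,
\be
\sum_{k=1}^{2n+1}v_kX_k+\sum_{k=1}^{i}(i-k)X'_k+\sum_{k=i+1}^{2n+1}(i+2n+1-k)X'_k=0 .
\ee
For any two \emph{consecutive} such indices $i-1,i$ I would subtract the $i$-th equation from the $(i-1)$-th; as in Lemma~\ref{lem 6.10} the terms $\sum v_kX_k$ cancel and the coefficients collapse to $(2n+1)X'_i-\sum_{k=1}^{2n+1}X'_k$, which equals $(2n+1)X'_i$ since $\sum_kX_k=0$. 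This gives $X'_i=0$ for $i=2,\dots,n-1$ and for $i=n+3,\dots,2n+1$. With $X_{n+1}=0$ and $X'_1=-X'_{2n+1}=0$ (from $X_1+X_{2n+1}=0$) this yields $X'_i=0$ for all $i\notin\{n,n+2\}$; the indices $n,n+2$ survive because the pairs $(n-1,n),(n,n+1),(n+1,n+2)$ are unavailable (as $n,n+1$ are excluded) and $X'_n=-X'_{n+2}$ by $X_n+X_{n+2}=0$.

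\textbf{Step 2: the remaining relation.} Feeding $X'_k=0$ for $k\notin\{n,n+2\}$ into the equation with $i=n+2$ (the one with $i=n-1$ serves equally well): in $\sum_{k=1}^{n+2}(n+2-k)X'_k$ only the $k=n$ term survives, with coefficient $2$, and $\sum_{k=n+3}^{2n+1}(\,\cdot\,)X'_k=0$, so the equation reduces to $\sum_{k=1}^{2n+1}v_kX_k+2X'_n=0$. Pairing $k$ with $2n+2-k$ and using $v_{2n+2-k}X_{2n+2-k}=v_kX_k$ and $v_{n+1}=0$ gives $\sum_{k=1}^{2n+1}v_kX_k=2\sum_{k=1}^{n}v_kX_k$, hence $X'_n+\sum_{k=1}^{n}v_kX_k=0$, i.e.\ $X'_n+v_nX_n=-\sum_{k=1}^{n-1}v_kX_k$.

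I expect no real obstacle: this is a bounded linear computation, and Lemmas~\ref{lem 6.10}, \ref{lem 6.16} already provide the mechanism. The only points needing care are the bookkeeping of the coefficients $(i-k)$, $(i+2n+1-k)$ under the differences and the correct use of the antidiagonal symmetry — in particular noticing that, unlike in Lemma~\ref{lem 6.16}, the difference $Z^n_{2n-1}-Z^{n+2}_{2n-1}$ is unavailable, so $X'_n$ is not forced to vanish and the exceptional set is $\{n,n+2\}$ (with $2n+2-n=n+2$), not $\{n,n+1\}$.
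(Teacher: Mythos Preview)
Your proof is correct and follows essentially the same scheme as the paper's: subtract consecutive available equations $Z^{i-1}_{2n-1}-Z^{i}_{2n-1}=0$ to kill the $X'_i$ outside $\{n,n+2\}$, then substitute into one surviving equation to extract $X'_n+v_nX_n=-\sum_{k=1}^{n-1}v_kX_k$. The only cosmetic differences are that the paper uses the cyclic identification $Z^0=Z^{2n+1}$ to handle $X'_1$ (rather than the symmetry $X_1=-X_{2n+1}$) and reads the final relation from the $i=2n+1$ equation instead of $i=n+2$; both choices lead to the same computation $2X'_n+\sum_k v_kX_k=0$.
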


\begin{proof}
By assumption we have the system of equations
		\bea
&&		X'_{2n}+2X'_{2n-1}+\dots+(2n-1)X'_{2}+(2n)X'_{1}+\sum_{k=1}^{2n+1}v_kX_k=0,\\
		\notag
	&&	X'_{2n-1}+2X'_{2n-2}+\dots+(2n-1)X'_{1}+(2n)X'_{2n+1}+\sum_{k=1}^{2n+1}v_kX_k=0,\\
		\notag
	&&	\dots \\
	&&	\notag
		X'_{n+1}+\dots+(n+1)X'_{1}+(n+2)X'_{2n+1}+\dots+2nX'_{n+3}+\sum_{k=1}^{2n+1}v_kX_k=0,\\
	&&	\notag
		X'_{n-2}+\dots+(n-2)X'_{1}+(n-1)X'_{2n+1}+\dots+2nX'_{n}+\sum_{k=1}^{2n+1}v_kX_k=0,\\
&&		\notag
		\dots \\
	&&	\notag
		X'_{2n+1}+2X'_{2n}+\dots+2nX'_{2}+\sum_{k=1}^{2n+1}v_kX_k=0.
		\eea
		Subtracting the second line from the first gives $X'_{2n+1}=0$, cf. the proof of Lemma  \ref{lem 6.10}. 
Similarly, for $i\notin\{ n, n+1, n+2 \}$  considering the difference $Z^{i-1}_{2n-1} - Z^i_{2n-1}=0$
we obtain $X'_i=0$. Notice that $X_{n+1}=0$ by assumption.

Now we can rewrite equation $Z^{2n+1}_{2n-1}=0$ as 
		\bea
&&		(n-1)X'_{n+2}+(n+1)X'_{n}+\sum_{k=1}^{2n+1}v_kX_k
=	
2(X'_{n}+v_{n}X_{n} +\sum_{k=1}^{n-1}v_kX_k)=0.
		\eea
	\end{proof}

	\section{Critical points of master functions and generation of tuples of polynomials}
	\label{sec gene}
	In this section we follow \cite{MV1}.
	For functions $f(x),g(x)$,  we denote
	\bea
	\Wr(f,g) = f(x)g'(x)-f'(x)g(x)
	\eea
	the Wronskian determinant, and $f'(x):=\frac{df}{dx}(x)$.

	\subsection{Master function}
\label{mM}
		Choose nonnegative integers $\bs{k} = (k_0,k_1,\dots, k_{n})$.
		Consider variables $ u = (u_i^{(j)})$, where $j = 0,1,\dots, n$ and $i = 1,\dots,k_j$.
		The {\it master function} $\Phi(u; \bs k)$
		is defined by the formula:
		\bean
\label{Master}
		& \Phi(u,\bs k) =
		-4 \sum_{i,i'}
		\ln (u^{(n-1)}_i-u^{(n)}_{i'})-
		2\sum_{j=0}^{n-2} \sum_{i,i'} 
		\ln (u^{(j)}_i-u^{(j+1)}_{i'})\\
\notag
		& +  8\sum_{i<i'} 
		\ln (u^{(n)}_i-u^{(n)}_{i'})+4\sum_{j=1}^{n-1} \sum_{i<i'} 
		\ln (u^{(j)}_i-u^{(j)}_{i'})
		+2\sum_{i<i'} \ln (u^{(0)}_i-u^{(0)}_{i'}).		
		\eean
	The product of symmetric groups
	$\Sigma_{\bs k}=\Si_{k_0}\times \Si_{k_1} \times \dots \times \Si_{k_{n}}$ acts on the set of variables
	by permuting the coordinates with the same upper index. The  function $\Phi$ is symmetric with respect to the $\Si_{\bs k}$-action.
	A point $u$ is a {\it critical point} if $d\Phi=0$ at $u$. In other words, $u$ is a critical point if and only if the following equations equal zero: 
	\bean
	\label{bethe eqn}
	&&
\phantom{aaaa}
\sum_{l=1}^{k_{1}}\frac{-2}{u_{j}^{(0)}-u_{l}^{(1)}} + 
	\sum_{s \neq j} \frac{2}{u_{j}^{(0)}-u_{s}^{(0)}}, \qquad j=1,\dots,k_0, 
	\\
	\notag
&&
	\sum_{l=1}^{k_{i-1}}\frac{-2}{u_{j}^{(i)}-u_{l}^{(i-1)}} + \sum_{l=1}^{k_{i+1}}\frac{-2}{u_{j}^{(i)}-u_{l}^{(i+1)}}+ 
	\sum_{s \neq j} \frac{4}{u_{j}^{(i)}-u_{s}^{(i)}}, \quad i = 1, \dots, n-2, \ j=1,\dots,k_i,
	\\
	\notag
&&	\sum_{l=1}^{k_{n-2}}\frac{-2}{u_{j}^{(n-1)}-u_{l}^{(n-2)}}+ \sum_{l=1}^{k_{n}}\frac{-4}{u_{j}^{(n-1)}-u_{l}^{(n)}} + 
	\sum_{s \neq j} \frac{4}{u_{j}^{(n-1)}-u_{s}^{(n-1)}}, \qquad j=1,\dots,k_{n-1},
	\\
	\notag
	&&
\sum_{l=1}^{k_{n-1}}\frac{-4}{u_{j}^{(n)}-u_{l}^{(n-1)}} + 
	\sum_{s \neq j} \frac{8}{u_{j}^{(n)}-u_{s}^{(n)}}, \qquad j=1,\dots,k_n. 
	\eean
	All the orbits have the same cardinality $\prod_{i=0}^n k_i!$\ .	
	We do not make distinction between critical points in the same orbit.

\begin{rem}
The definition of master functions can be found in \cite{SV}, see also \cite{MV1, MV2}.
The master functions $\Phi(u,\bs k)$ in \Ref{Master}
are associated with the Kac-Moody algebra with Cartan matrix of type
		\beq
\label{A}
		A=(a_{i,j})=\left(
		\begin{matrix}
			2 & -2  & 0 & 0 & \dots & \dots & 0\\
			-1  & 2  & -1  & 0 & \dots & \dots &\dots &\\
			0 & -1  & 2 & -1 & \dots & \dots & \dots \\
			0 & 0 & -1 & \dots & \dots & \dots & \dots \\
			\dots & \dots & \dots & \dots &2 & -1 & 0 \\
			\dots & \dots & \dots & \dots & -1 & 2 & -2 \\
			0 & \dots & \dots & \dots & 0 & -1 & 2
		\end{matrix}
		\right),
		\eeq
		 which is dual to the Cartan matrix $\AT$, see  this Langlands duality in \cite{MV1, MV2, VWW}.
\end{rem}
	
	\subsection{Polynomials representing critical points}
	\label{PRCP}
	
	Let $u = (u_i^{(j)})$ be a critical point of the master function
	$\Phi$.
	Introduce the $(n+1)$-tuple of polynomials $ y= (y_{0}(x),\dots,y_{n}(x))$,
	\bean\label{y}
	y_j(x)\ =\ \prod_{i=1}^{k_j}(x-u_i^{(j)}).
	\eean
	This tuple of polynomials defines a point in the direct product
	$(\C[x])^{n+1}$.
	We say that the tuple {\it represents the
		critical point}.

		Each polynomial of the tuple will be considered up to multiplication
	by a nonzero number.

It is convenient to think that the $(n+1)$-tuple $\bs{y}^\emptyset = (1, \dots, 1)$ of constant polynomials
	represents  in $(\C[x])^{n+1}$, the critical point of the master function with no variables.
	This corresponds to the case  $\bs k = (0, \dots, 0)$.
	
	We say that a given tuple $ y\in (\C[x])^{n+1}$ is {\it generic} if each polynomial $y_i(x)$ has no multiple roots
 and for $i=0,\dots,n-1$
	the polynomials  $y_i(x)$ and
	$y_{i+1}(x)$ have no common roots. If a tuple represents a critical point, then it is generic,
	see \Ref{bethe eqn}.  For example, the tuple  ${y}^\emptyset $ is generic.

	\subsection{Elementary generation}
	\label{Elementary generation}
An $(n+1)$-tuple is called {\it fertile} if  there exist polynomials $\tilde y_0,\dots, \tilde y_n\in (\C[x])^{n+1}$ such that
	\bean
	\label{Wr-Cr}
	\Wr(\tilde{y_j},y_j) = \prod_{i\ne j}y_i^{-a_{i,j}} , \qquad j=0,1,\dots, n,
	\eean
where $a_{i,j}$ are the entries of the Cartan matrix of type $A^{(2)}_{2n}$, that is,
\bean
\label{Eqn}
&&
	\Wr(\tilde{y_0},y_0) = y_{1}^{2},\qquad
	\Wr(\tilde{y_i},y_i) = y_{i-1}y_{i+1},\qquad i=1,\dots,n-2,
	\\
\notag
&&
\phantom{aaaa}
	\Wr(\tilde{y}_{n-1},y_{n-1}) = y_{n-2}y_{n}^{2},\qquad 
	\Wr(\tilde{y}_{n},y_n) = y_{n-1}. 
	\eean

	For  example, $y^\emptyset$ is fertile and $\tilde y_j=x+c_j,$ where the $c_j$ are arbitrary numbers.

Assume that an $(n+1)$-tuple of polynomials $y=(y_0,\dots,y_n)$ is fertile.
	Equations \Ref{Wr-Cr} give us first order inhomogeneous differential equations with respect to $\tilde y_i$.
	 The solutions are
	\bean
	\label{deG 0}
	&
	\tilde y_0 = y_0\int \frac{y_1^{2}}{y_0^2} dx + c_{0} y_0,\\
	\label{deG i}
	&
	\tilde y_i = y_i\int \frac{y_{i-1}y_{i+1}}{y_i^2} dx + c_{i} y_i, \qquad i=1,\dots,n-2,\\
	\label{deG n-1}
	&
	\tilde y_{n-1} = y_{n-1}\int \frac{{y_{n-2}y_{n}^{2}}}{y_{n-1}^2} dx + c_{n-1} y_{n-1},\\
	\label{deG n}
	&
	\tilde y_n = y_n\int \frac{y_{n-1}}{y_n^2} dx + c_{n} y_n,
	\eean
	where $c_0,\dots,c_n$ are arbitrary numbers.
	For each $i=0,\dots,n$, the tuple
	\bean
	\label{simple i}
\phantom{aaaa}
	y^{(i)}(x,c_{i}) = (y_0(x), \dots, y_{i-1}(x), \tilde y_i(x,c_{i}),y_{i+1}(x),\dots, y_{n}(x))
	\ \in  \ (\C[x])^{n+1} \
	\eean
	forms a one-parameter family.  This family  is called
	the {\it generation  of tuples  from $ y$ in the $i$-th direction}.
	A tuple of this family is called an  immediate descendant of $ y$ in the $i$-th direction.
	
	\begin{thm}
		[\cite{MV1}]
		\label{fertile cor}
		${}$
		
		\begin{enumerate}
			\item[(i)]
			A generic tuple $y = (y_0, \dots, y_n)$, $\deg y_i=k_i$,
			represents a critical point of the master function
$\Phi(u;\bs k)$
			if and only if $y$ is fertile.
			
			\item[(ii)] If $ y$ represents a critical point,
			then for any $c\in\C$ the tuple $y^{(j)}(x,c)$ $j=0,\dots,n$ is fertile.
			
			\item[(iii)]
			If $y$ is generic and fertile, then for almost all values of the parameter
			$c\in \C$ the tuple $y^{(j)}(x,c)$ is generic.
			The exceptions form a finite set in $\C$.

			\item[(iv)]
			
			Assume that a sequence ${y}_i, i = 1,2,\dots$, of fertile tuples
			has a limit ${y}_\infty$ in $(\C[x])^{n+1}$ as $i$ tends to infinity.
			\begin{enumerate}
				\item[(a)]
				Then the limiting tuple ${y}_\infty$ is fertile.
				\item[(b)]
				For $j = 0,\dots, n$, let ${y}_\infty^{(j)}$ be an immediate
				descendant of ${y}_\infty$.
				Then for all $j$ there exist immediate descendants
				${y}_i^{(j)}$ of ${y}_i$ such that ${y}_\infty^{(j)}$
				is the limit of ${y}_i^{(j)}$ as $i$ tends to infinity.
				
			\end{enumerate}
			\qed
			
		\end{enumerate}
	\end{thm}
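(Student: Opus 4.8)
The plan is to reduce all four assertions to elementary Wronskian identities and to genericity of points in pencils of polynomials (with a closure argument for the last part), in the spirit of \cite{MV1}.

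\emph{Part (i).} Fix $j$ and set $T_j=\prod_{i\ne j}y_i^{-a_{i,j}}$, the prescribed right-hand side of the $j$-th equation of \Ref{Wr-Cr}. Since $y$ is generic, $y_j$ has simple roots, and the first order linear equation $\Wr(\tilde y_j,y_j)=T_j$ has a \emph{polynomial} solution $\tilde y_j$ if and only if the rational function $T_j/y_j^2$ has zero residue at every root of $y_j$: the only obstruction to $y_j\!\int (T_j/y_j^2)\,dx$ being a polynomial is a logarithmic term at a root of $y_j$, and residues at roots of $y_j$ are the coefficients of those terms. Writing $y_j=(x-a)g$ at a simple root $a$, the residue at $a$ vanishes exactly when $T_j'(a)/T_j(a)=2g'(a)/g(a)$; expanding $T_j'/T_j=\sum_{i\ne j}(-a_{i,j})\,y_i'/y_i$ and $g'(a)/g(a)=\sum_{b\ne a,\,y_j(b)=0}(a-b)^{-1}$ turns this into $\sum_i a_{i,j}\sigma_i(a)=0$, where $\sigma_i(a)=\sum_{c:\,y_i(c)=0}(a-c)^{-1}$ for $i\ne j$ while the sum defining $\sigma_j(a)$ omits the root $a$ itself. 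By the Langlands duality $A=(\AT)^{T}$ between the Cartan matrix of \Ref{Wr-Cr} and the matrix \Ref{A} that controls $\Phi$, this is exactly the critical point equation \Ref{bethe eqn} for the $u$-variable located at $a$ in the $j$-th group. Ranging over $j$ and over the roots of $y_j$, a generic tuple is fertile if and only if the corresponding $u$ is a critical point; a tuple representing a critical point is automatically generic, as already noted after \Ref{bethe eqn}.

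\emph{Part (ii).} Let $y$ represent a critical point, hence be fertile by (i), and form $y^{(j)}(x,c)$; its only new component is $\tilde y_j=y_j\!\int (T_j/y_j^2)\,dx+c\,y_j$, which satisfies $\Wr(\tilde y_j,y_j)=T_j$. I would verify the $n+1$ fertility equations of $y^{(j)}(x,c)$ one direction at a time. In direction $j$ the right-hand side of \Ref{Wr-Cr} is unchanged, and by antisymmetry of the Wronskian $-y_j$ is a polynomial partner of $\tilde y_j$. In a direction $i\ne j$ with $a_{i,j}=0$, the $j$-th component does not enter the right-hand side, so the equation coincides with the $i$-th equation of the fertile tuple $y$ and is solvable. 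The only substantial case — and the step I expect to be the main obstacle — is a direction $i$ adjacent to $j$ in the Dynkin diagram ($a_{j,i}\in\{-1,-2\}$ here), where one must produce a polynomial $z$ with $\Wr(z,y_i)=\tilde y_j^{\,-a_{j,i}}\prod_{k\ne i,j}y_k^{-a_{k,i}}$, given $\Wr(\tilde y_i,y_i)=y_j^{\,-a_{j,i}}\prod_{k\ne i,j}y_k^{-a_{k,i}}$ together with the critical point relations. I would isolate this as a lemma and prove it by exhibiting $z$ explicitly as a bilinear expression in $\tilde y_j$, $y_j$ and the old partner $\tilde y_i$, then checking the Wronskian identity by direct computation, handling single edges ($a_{j,i}=-1$) and the two double edges ($a_{j,i}=-2$, where the expansions in Lemma \ref{lem exp} are convenient) separately.

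\emph{Part (iii).} As $c$ varies, $\tilde y_j(x,c)=\tilde y_j(x,0)+c\,y_j$ runs through a pencil of polynomials. If $a\in\C$ is a multiple root of $\tilde y_j(x,c)$, or a common root of $\tilde y_j(x,c)$ and $y_j$, then $\Wr(\tilde y_j,y_j)(a)=T_j(a)=0$, so $a$ lies among the finitely many roots of the Dynkin neighbours of $y_j$; genericity of $y$ then gives $y_j(a)\ne0$, hence $c\mapsto\tilde y_j(a,c)=\tilde y_j(a,0)+c\,y_j(a)$ is a nonconstant affine function of $c$ and vanishes for at most one value of $c$. Discarding these finitely many values — and one further value if $\deg\tilde y_j(x,0)=\deg y_j$, so that $\deg\tilde y_j(x,c)$ stays constant — leaves $y^{(j)}(x,c)$ with all the simplicity and root-disjointness needed for genericity, since the only components that changed are the $j$-th one and its neighbours, all covered by the dichotomy above.

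\emph{Part (iv).} For (a): fertility of a tuple with bounded degrees is a closed condition, because by the ODE description of (i) it amounts, for each $i$, to $T_i$ lying in the image of the linear map $p\mapsto\Wr(p,y_i)$ on polynomials of a fixed bounded degree; one passes to the limit by extracting a convergent subsequence of the partners $\tilde y_i$, escape to infinity being excluded after a fixed normalisation of $\tilde y_i$ (here $y_{i,\infty}\ne0$ since the entries of a tuple are taken up to scalar). For (b): choose, for each fertile tuple $y$ near $y_\infty$, a partner $A_j(y)$ of $\Wr(\,\cdot\,,y_j)=T_j(y)$ depending continuously on $y$ — normalise one coefficient, working inside a fixed ambient space of polynomials of degree $\le D$ to absorb possible degree drops in the limit — so that every immediate descendant of $y$ in direction $j$ is $A_j(y)+c\,y_j$ for a unique $c$. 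Writing the given descendant as $y_\infty^{(j)}=A_j(y_\infty)+c_0\,y_{j,\infty}$, the tuples $A_j(y_m)+c_0\,y_{j,m}$ are immediate descendants of $y_m$ and converge to $y_\infty^{(j)}$, establishing (b).
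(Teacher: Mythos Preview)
The present paper does not prove this theorem (it is cited from \cite{MV1} and stated without argument), so the comparison is really with \cite{MV1}.

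Your sketches of (i), (iii), and (iv) are correct and follow the \cite{MV1} line: the residue criterion for polynomial solvability of $\Wr(\tilde y_j, y_j) = T_j$ giving (i), the pencil argument for (iii), and the linear-algebra closedness argument for (iv).

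Part (ii) is where there is a genuine gap and a departure from \cite{MV1}. You defer the adjacent-direction case to a lemma promising ``$z$ explicitly as a bilinear expression in $\tilde y_j$, $y_j$ and the old partner $\tilde y_i$'', but no such formula is given, and for the double edges of $A^{(2)}_{2n}$ a bilinear ansatz is not adequate: e.g.\ for $j=1$, $i=0$ one needs $\Wr(z,y_0)=\tilde y_1^{\,2}$, which is quadratic in $\tilde y_1$. Your appeal to Lemma~\ref{lem exp} is also misplaced --- that lemma computes matrix exponentials $e^{gf_j}$ in the loop realisation of $\gAA$ and has nothing to do with polynomial Wronskian identities.

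The \cite{MV1} route for (ii) avoids constructing $z$ altogether. One first establishes (iii) and (iv)(a), then shows by a direct residue computation (using $\Wr(\tilde y_j,y_j)=T_j$ at the roots of $\tilde y_j$ and of the $y_i$ adjacent to $j$) that whenever the descendant $y^{(j)}(x,c)$ is \emph{generic} it again satisfies the Bethe equations \Ref{bethe eqn}, hence is fertile by (i); the finitely many non-generic values of $c$ are then covered by (iv)(a). This works uniformly across single and double edges and is what you should aim for.
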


	\subsection{Degree increasing generation}
	\label{Degree increasing generation}
	
	Let $y=(y_0,\dots,y_n)$ be a generic fertile $(n+1)$-tuple of polynomials.
 Define $k_j=\deg y_j$ for 	 $j=0,\dots, n$.
	
	The polynomial $\tilde y_0$ in \Ref{deG 0}
	is of degree $k_0$ or
	$\tilde k_0=2k_1+ 1 - k_0$. We say that the  generation $(y_0,\dots,y_n) \to (\tilde y_0,\dots,y_n)$ is
	{\it  degree increasing } in the $0$-th direction  if $\tilde k_0 > k_0$. In that
	case $\deg \tilde y_0=\tilde k_0$ for all $c$.	
	
	For $i=1,\dots, n-2$, the polynomial $\tilde y_i$ in \Ref{deG i}
	is of degree $k_i$ or
	$\tilde k_i= k_{i-1}+k_{i+1} + 1 - k_i$. We say that the  generation $(y_0,\dots, y_{i}, \dots, y_n) \to (y_0,\dots, \tilde y_i, \dots, y_{n})$ is
	{\it  degree increasing } in the $i$-th direction  if $\tilde k_i > k_i$. In that
	case $\deg \tilde y_i=\tilde k_i$ for all $c$.
	
	The polynomial $\tilde y_{n-1}$ in \Ref{deG n-1}
	is of degree $k_{n-1}$ or
	$\tilde{k}_{n-1}=k_{n-2}+2k_{n}+ 1 - k_{n-1}$. We say that the  generation $(y_0,\dots,y_{n-1},y_n) \to (y_0,\dots,\tilde y_{n-1},y_n)$ is
	{\it  degree increasing } in the $n-1$-st direction  if $\tilde k_{n-1} > k_{n-1}$. In that
	case $\deg \tilde y_{n-1}=\tilde k_{n-1}$ for all $c$.
	
	The polynomial $\tilde y_n$ in \Ref{deG n}
	is of degree $k_n$ or
	$\tilde k_n=k_{n-1}+ 1 - k_n$. We say that the  generation $(y_0,\dots,y_{n-1},y_n) \to ( y_0,\dots,y_{n-1},\tilde y_n)$ is
	{\it  degree increasing } in the $n$-th direction if $\tilde k_n > k_n$. In that
	case $\deg \tilde y_n=\tilde k_n$ for all $c$.

	For $i= 0,\dots,n$, if the generation is degree increasing in the $i$-th direction we normalize family
	\Ref{simple i} and construct a map
	$ Y_{y,i} : \C \to (\C[x])^{n+1}$ as follows. First we multiply the polynomials $y_0,\dots, y_n$ by numbers to make them monic.
	Then we choose a monic polynomial $ y_{i,0}$ satisfying the equation $\Wr({y_{i,0}},y_i) = \ep\, \prod_{j\ne i}y_j^{-a_{j,i}},$ for some
nonzero interger $\ep$,
	and such that the coefficient of $x^{k_i}$ in $y_{i,0}$ equals zero. Set
	\bean
	\label{tilde yi}
	\tilde y_i(x,c)=y_{i,0}(x) + cy_i(x),
	\eean
	and define
	\bean
	\label{normalized i}
\phantom{aaaa}
	Y_{y,i} \ :\ \C\ \to\ (\C[x])^{n+1}, \qquad c \mapsto\ y^{(i)}(x,c) = (y_0(x),\dots,\tilde y_i(x,c),\dots,y_{n}(x)).
	\eean
	The polynomials of this $(n+1)$-tuple are monic.
	
	\subsection{Degree-transformations and generation of vectors of integers}
	\label{sec degree transf}

	The degree-transformations
	\bean
	\label{l-transformation}
	\phantom{aaa}
&&
	\bs k:=(k_0,\dots, k_n)\ \mapsto \
	\bs k^{(0)}=(2k_1+1-k_0,\dots, k_n),
	\\
	\notag
	&&
	\bs k:=(k_0,\dots,k_{n})\ \mapsto \
	\bs k^{(i)}=(k_0,\dots, k_{i-1}+k_{i+1}+1-k_i,\dots, k_{n}), 
\\
\notag
&&
\phantom{aaaaaaaaaaaaaaaaaaaaaaaaaaaaaaaaaaaaaaaaaaaaaaaaaaa}
\qquad i=1,\dots,n-2,
	\\
	\notag
	&&
	\bs k:=(k_0,\dots, k_{n-1},k_{n})\ \mapsto \
	\bs k^{(n-1)}=(k_0,\dots, k_{n-2}+2k_n+1-k_{n-1}, k_n),
	\\
	\notag
	&&
	\bs k:=(k_0,\dots,k_{n})\ \mapsto \
	\bs k^{(n)}=(k_0,\dots, k_{n-1}+1-k_n),
	\eean
	correspond to the shifted action of
  reflections $ w_0,\dots, w_n\in W$,
	where $W$ is the Weyl group associated with the Cartan matrix $A$ in \Ref{A}
 and $w_0,\dots, w_n$
	are the standard generators, see \cite[Lemma 3.11]{MV1} for more detail.

	We take formula \Ref{l-transformation} as the definition of {\it degree-transformations}:
		\bean
		\label{s i l-transf}
		&&
		w_0\ :\bs k\ \mapsto \
		\bs k^{(0)}=(2k_1+1-k_0,\dots, k_n),
		\\
		\notag
		&&
		w_i\ :\bs k\ \mapsto \
		\bs k^{(i)}=(k_0,\dots, k_{i-1}+k_{i+1}+1-k_i,\dots, k_{n}),
 \qquad i=1,\dots,n-2,
		\\
		\notag
		&&
		w_{n-1}\ :\bs k\ \mapsto \
		\bs k^{(n-1)}=(k_0,\dots, k_{n-2}+2k_n+1-k_{n-1}, k_n),
		\\
		\notag
		&&
		w_n\ :\bs k\ \mapsto \
		\bs k^{(n)}=(k_0,\dots, k_{n-1}+1-k_n),
		\eean
	acting on arbitrary vectors $\bs k=(k_0,\dots,k_n)$.

	We start with the vector $\bs k^\emptyset=(0,\dots,0)$ and a sequence $J=(j_1,j_2,\dots, j_m)$ of
	integers such that $j_i\in \{0,\dots,n\}$ for all $i$.
	We apply the corresponding degree transformations to  $\bs k^\emptyset$ and obtain
	a sequence of vectors $\bs k^\emptyset,$\ $  \bs k^{(j_1)} =w_{j_1}\bs k^\emptyset, $\ $
	\bs k^{(j_1,j_2)} = w_{j_2} w_{j_1}\bs k^\emptyset$,\dots,
	\bean
	\label{gen vector}
	\bs k^J  = w_{j_m}\dots w_{j_2} w_{j_1}\bs k^\emptyset .
	\eean
	We say that the {\it vector $\bs k^J $ is generated from $(0,\dots,0)$ in the direction of $J$}.
	
	We call a sequence $J$ {\it degree increasing} if for every $i$ the transformation
	$w_{j_i}$ applied to  $  w_{j_{i-1}}\dots w_{j_1}\bs k^\emptyset$
	increases the $j_i$-th coordinate.

	\subsection{Multistep generation}
	\label{sec generation procedure}
	
	Let $J = (j_1,\dots,j_m)$ be a degree increasing sequence.
	Starting from $y^\emptyset=(1,\dots,1)$ and $J$, we construct
	a map
	\bea
	Y^J : \C^m \to (\C[x])^{n+1}
	\eea
 by induction on $m$.
	If $J=\emptyset$, the map $Y^\emptyset$ is the map $\C^0=(pt)\ \mapsto y^\emptyset$.
	If $m=1$ and $J=(j_1)$,  the map
	$Y^{(j_1)} :  \C \to (\C[x])^{n+1}$ is given by formula \Ref{normalized i}
	for $y=y^\emptyset$ and $j=j_1$. More precisely, equation \Ref{Wr-Cr} 
	takes the form $\Wr(\tilde y_{j_{1}},1) =1$. Then $\tilde y_{j_{1},0}= x$ and
	\bea
	Y^{(j_{1})}\ :\ \C \mapsto (\C[x])^{n+1}, \qquad
	c \mapsto (1,\dots, x+c,\dots, 1).
	\eea
	By Theorem \ref{fertile cor}
	all tuples in the image are fertile and almost all tuples are generic
	(in this example all tuples are generic). 
	Assume that for ${\tilde J} = (j_1,\dots,j_{m-1})$,  the map
	$Y^{{\tilde J}}$ is constructed. To obtain  $Y^J$ we apply the
	generation procedure in the $j_m$-th
	direction to every tuple of the image of $Y^{{\tilde J}}$. More precisely, if
	\bean
	\label{J'}
	Y^{{\tilde J}}\ : \
	{\tilde c}=(c_1,\dots,c_{m-1}) \ \mapsto \ (y_0(x,{\tilde c}),\dots, y_n(x,{\tilde c})),
	\eean
	then 
	\bean
	\label{Ja}
	&&
	Y^{J} :	({\tilde c},c_m) \mapsto
	(y_0(x,{\tilde c}), \dots, y_{j_{m},0}(x,{\tilde c}) + c_m y_{j_{m}}(x,{\tilde c}),\dots, y_n(x,{\tilde c})).
	\eean
	The map  $Y^J$ is called  the {\it generation  of tuples   from $ y^\emptyset$ in the $J$-th direction}.

	\begin{lem}
		\label{lem gen procedure}
		All tuples in the image of $Y^J$ are fertile and almost all tuples are generic. For any $c\in\C^m$
		the $(n+1)$-tuple $Y^J(c)$ consists of monic polynomials. The degree vector of this tuple
		equals $\bs k^J$.
		\qed
	\end{lem}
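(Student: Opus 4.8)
The plan is to prove the lemma by induction on the length $m$ of the degree increasing sequence $J=(j_1,\dots,j_m)$. The cases $m=0$ and $m=1$ are already recorded in Section~\ref{sec generation procedure}: $Y^\emptyset$ is the constant tuple $y^\emptyset=(1,\dots,1)$ and $Y^{(j_1)}$ sends $c$ to $(1,\dots,x+c,\dots,1)$, and in both cases the tuples are monic, fertile and generic by Theorem~\ref{fertile cor}, with degree vectors $\bs k^\emptyset$ and $\bs k^{(j_1)}$. So assume the statement for $\tilde J=(j_1,\dots,j_{m-1})$, write $Y^{\tilde J}(\tilde c)=(y_0(x,\tilde c),\dots,y_n(x,\tilde c))$, and recall from \Ref{tilde yi} and \Ref{Ja} that $Y^J(\tilde c,c_m)$ is obtained from $Y^{\tilde J}(\tilde c)$ by replacing its $j_m$-th entry with $\tilde y_{j_m}(x,\tilde c,c_m)=y_{j_m,0}(x,\tilde c)+c_m\,y_{j_m}(x,\tilde c)$.

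First I would settle monicity and the degree vector. By the inductive hypothesis each $y_l(x,\tilde c)$ is monic of degree $(\bs k^{\tilde J})_l$, so $Y^{\tilde J}(\tilde c)$ has degree vector exactly $\bs k^{\tilde J}$. Since $J$ is degree increasing, $w_{j_m}$ raises the $j_m$-th coordinate of $\bs k^{\tilde J}$; hence the generation of $Y^{\tilde J}(\tilde c)$ in the $j_m$-th direction is degree increasing in the sense of Section~\ref{Degree increasing generation}, where it is shown that then $y_{j_m,0}$ is monic of degree $(\bs k^J)_{j_m}>(\bs k^{\tilde J})_{j_m}=\deg y_{j_m}$. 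Consequently $\tilde y_{j_m}(x,\tilde c,c_m)$ is monic of degree $(\bs k^J)_{j_m}$ for every $c_m$, the other entries are unchanged, and the degree vector of $Y^J(\tilde c,c_m)$ is $w_{j_m}\bs k^{\tilde J}=\bs k^J$.

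Next I would handle fertility and genericity. By the inductive hypothesis the set $U\subset\C^{m-1}$ of $\tilde c$ with $Y^{\tilde J}(\tilde c)$ generic is the complement of a proper Zariski closed subset, hence dense. For $\tilde c\in U$ the tuple $Y^{\tilde J}(\tilde c)$ is generic and fertile, so it represents a critical point by Theorem~\ref{fertile cor}(i); then by Theorem~\ref{fertile cor}(ii) (applied to the normalized family \Ref{normalized i}) every $Y^J(\tilde c,c_m)$ is fertile, and by Theorem~\ref{fertile cor}(iii) it is generic for all but finitely many $c_m$. For an arbitrary $\tilde c$, the tuple $Y^{\tilde J}(\tilde c)$ is still fertile by the inductive hypothesis, so the relevant generation integral of Section~\ref{Elementary generation} is polynomial and $Y^J(\tilde c,c_m)$ is a well-defined monic tuple depending polynomially, hence continuously, on $(\tilde c,c_m)$; choosing $\tilde c_i\to\tilde c$ with $\tilde c_i\in U$ we get $Y^J(\tilde c_i,c_m)\to Y^J(\tilde c,c_m)$ with each $Y^J(\tilde c_i,c_m)$ fertile, and Theorem~\ref{fertile cor}(iv)(a) then gives that $Y^J(\tilde c,c_m)$ is fertile. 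This proves that every tuple in the image of $Y^J$ is fertile. For genericity, observe that the non-generic locus in $\C^m$ is cut out by the vanishing of the discriminants of the entries of $Y^J$ and of the resultants of its consecutive entries, which are polynomials in $(\tilde c,c_m)$ because the degrees are now constant; it is therefore Zariski closed, and picking $\tilde c_0\in U$ together with a $c_m^0$ outside the finite exceptional set of Theorem~\ref{fertile cor}(iii) exhibits a point at which $Y^J$ is generic, so this locus is proper and almost all tuples in the image are generic.

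The main obstacle is the fertility of $Y^J(\tilde c,c_m)$ precisely at the $\tilde c$ for which $Y^{\tilde J}(\tilde c)$ is fertile but not generic: Theorem~\ref{fertile cor}(ii) only speaks of tuples representing critical points, so there is no direct argument there, and one must route through the limiting statement Theorem~\ref{fertile cor}(iv)(a) together with the continuity of the entire generation chain in the parameters. Keeping track of that continuity --- equivalently, of the fact that fertility of the tuples produced at each stage is exactly what makes the generation integrals polynomial, so that $Y^J$ is genuinely defined on all of $\C^m$ --- is the bookkeeping that the induction must carry along.
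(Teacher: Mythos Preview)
The paper offers no proof of this lemma; it is stated with a terminal \qed and treated as an immediate consequence of the recursive construction of $Y^J$ in Section~\ref{sec generation procedure} together with Theorem~\ref{fertile cor}. Your induction on $m$ spells out exactly that argument---using parts (i)--(iii) of Theorem~\ref{fertile cor} on the generic locus and the limit statement (iv)(a) to propagate fertility to the non-generic points---and is correct.
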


	\begin{lem}
		\label{lem uniqeness}
		The map  $Y^J$ sends distinct points of $\C^m$ to distinct points of  $(\C[x])^{n+1}$.
	\end{lem}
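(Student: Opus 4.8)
The plan is to prove injectivity of $Y^J$ by induction on $m$, the length of the degree increasing sequence $J=(j_1,\dots,j_m)$. The base cases $m=0$ and $m=1$ are immediate: $Y^\emptyset$ is a one-point map, and $Y^{(j_1)}$ sends $c\mapsto(1,\dots,x+c,\dots,1)$, which is visibly injective. For the inductive step, write $\tilde J=(j_1,\dots,j_{m-1})$ and assume $Y^{\tilde J}$ is injective. Recall from \Ref{Ja} that
\bean
\notag
Y^J(\tilde c,c_m) = (y_0(x,\tilde c),\dots,y_{j_m,0}(x,\tilde c)+c_m y_{j_m}(x,\tilde c),\dots,y_n(x,\tilde c)),
\eean
so all coordinates except the $j_m$-th depend only on $\tilde c$, and these coordinates already determine $\tilde c$ by the inductive hypothesis provided we can recover $(y_0(x,\tilde c),\dots,y_n(x,\tilde c))=Y^{\tilde J}(\tilde c)$ from $Y^J(\tilde c,c_m)$. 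Since the sequence $J$ is degree increasing in the $j_m$-th direction, $\deg(y_{j_m,0}+c_m y_{j_m})=\tilde k_{j_m}>k_{j_m}=\deg y_{j_m}$ for every $c_m$, and the degree is constant along the family; this will be the mechanism that lets us reconstruct the data.

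The key step is therefore: given the tuple $Y^J(\tilde c,c_m)$, recover both $\tilde c$ and $c_m$. Here is how I would do it. Suppose $Y^J(\tilde c,c_m)=Y^J(\tilde c',c_m')$. Then the two tuples agree coordinate-by-coordinate. For every index $i\ne j_m$ the $i$-th coordinate is $y_i(x,\tilde c)=y_i(x,\tilde c')$. I claim this already forces $\tilde c=\tilde c'$. Indeed, the $j_m$-th coordinate of $Y^{\tilde J}(\tilde c)$, namely $y_{j_m}(x,\tilde c)$, can be recovered from the neighbouring coordinates: by construction $\Wr(y_{j_m}(x,\tilde c),\,\cdot\,)$ — more precisely, $y_{j_m}(x,\tilde c)$ is the monic polynomial appearing as a factor in the Wronskian relations \Ref{Wr-Cr}/\Ref{Eqn} determined by $\prod_{l\ne j_m}y_l^{-a_{l,j_m}}$, and since that product is a fixed polynomial of degree $k_{j_m-1}+\dots$ determined entirely by the other coordinates — hmm, more cleanly: the polynomial $y_{j_m}(x,\tilde c)$ is recovered as the greatest common divisor structure, but the cleanest route is to observe that the whole tuple $Y^{\tilde J}(\tilde c)$ is fertile and generic for generic $\tilde c$, and the $j_m$-th entry is the unique monic polynomial of degree $k_{j_m}$ making the Wronskian equations solvable with polynomial solutions. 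Once we know $y_{j_m}(x,\tilde c)$, the full tuple $Y^{\tilde J}(\tilde c)$ is determined, and by the inductive hypothesis $\tilde c=\tilde c'$. Finally, with $\tilde c=\tilde c'$ fixed, the $j_m$-th coordinates give $y_{j_m,0}(x,\tilde c)+c_m y_{j_m}(x,\tilde c)=y_{j_m,0}(x,\tilde c)+c_m' y_{j_m}(x,\tilde c)$, hence $(c_m-c_m')y_{j_m}(x,\tilde c)=0$, and since $y_{j_m}(x,\tilde c)\ne 0$ we get $c_m=c_m'$.

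The main obstacle I anticipate is justifying the reconstruction of $y_{j_m}(x,\tilde c)$ from the other coordinates of the tuple, because a priori the generation map is only defined on a subset where tuples are generic, and "almost all" tuples in the image are generic (Lemma \ref{lem gen procedure}), not all. So I would need to argue on the Zariski-dense open subset of $\C^m$ where $Y^J(\tilde c,c_m)$ is generic — there injectivity follows from the degree count and the Wronskian reconstruction — and then extend to all of $\C^m$. The extension could be handled either by a direct degree/leading-coefficient bookkeeping argument that does not require genericity (tracking how the top coefficients of each $\tilde y_i$ depend polynomially on the parameters, using that each generation step is degree increasing so no degree collapse occurs), or by noting that $Y^J$ is a morphism of affine varieties $\C^m\to(\C[x])^{n+1}$ whose restriction to a dense open set is injective and whose image-coordinates are explicit polynomials in $(c_1,\dots,c_m)$, then checking the Jacobian is everywhere nonzero. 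I would favour the explicit bookkeeping approach: at each step $Y^{J}$ is obtained from $Y^{\tilde J}$ by appending $c_m$ linearly in the $j_m$-th slot with a nonvanishing multiplier $y_{j_m}(x,\tilde c)$, and tracing the degree-increasing condition shows the parameter $c_{m}$ shows up as an identifiable coefficient (e.g. the coefficient of $x^{k_{j_m}}$, which was arranged to vanish in $y_{j_m,0}$) — this gives $c_m$ directly, reducing to the previous step and sidestepping genericity entirely.
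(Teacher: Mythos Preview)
Your approach --- induction on $m$ --- is exactly what the paper does; the paper's proof is the single sentence ``The lemma is easily proved by induction on $m$.'' So the strategy matches.

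Your execution is mostly sound but has one loose end. The observation that $c_m$ is the coefficient of $x^{k_{j_m}}$ in the $j_m$-th slot (since $y_{j_m,0}$ was normalized to have zero there and $y_{j_m}$ is monic of degree $k_{j_m}$) is correct and cleanly gives $c_m=c_m'$. However, ``reducing to the previous step'' requires $Y^{\tilde J}(\tilde c)=Y^{\tilde J}(\tilde c')$, and for that you still need $y_{j_m}(x,\tilde c)=y_{j_m}(x,\tilde c')$ --- knowing $c_m$ and the new $j_m$-th polynomial $P=y_{j_m,0}+c_m y_{j_m}$ does not by itself separate $y_{j_m,0}$ from $y_{j_m}$. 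The Wronskian reconstruction you sketched earlier fills this: from $\Wr(P,y_{j_m}(x,\tilde c))=\epsilon\prod_{i\ne j_m}y_i^{-a_{i,j_m}}$ and the same equation for $\tilde c'$ (with identical right-hand side, since the $i\ne j_m$ slots agree), the difference $y_{j_m}(x,\tilde c)-y_{j_m}(x,\tilde c')$ lies in $\C P$; but it has degree $<k_{j_m}<\tilde k_{j_m}=\deg P$, so it vanishes.

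Your genericity worry is a red herring. The Wronskian relation $\Wr(P,y_{j_m})=\epsilon\prod y_i^{-a_{i,j_m}}$ holds for \emph{every} $(\tilde c,c_m)\in\C^m$, since by Lemma~\ref{lem gen procedure} all tuples in the image of $Y^J$ are fertile and the degree vector is constant along the family; the uniqueness argument above is pure linear algebra and needs no genericity. So the bookkeeping argument works on all of $\C^m$ directly, without passage through a dense open or a Jacobian computation.
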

	
	\begin{proof}
		The lemma is easily proved by induction on $m$.
	\end{proof}

\subsection{Critical points and the population generated from $y^\emptyset$}
	
	The set of all tuples $(y_0,\dots,y_n)\in (\C[x])^{n+1}$ obtained from $y^\emptyset=(1,\dots,1)$
	by generations in all  directions $J=(j_1,\dots,j_m)$, $m\ge 0$, (not necessarily degree increasing)
 is called the {\it population of tuples}
	generated from $y^\emptyset$, see \cite{MV1, MV2}.

\begin{thm} [{\cite{MV3}}]
\label{one gen}

If a tuple of polynomials $(y_0,\dots,y_n)$ represents a critical point of the master function
$\Phi(u,\bs k)$ defined in \Ref{Master} for some parameters $\bs k=(k_0,\dots,k_n)$, then 
$(y_0,\dots,y_n)$  is a point of the population generated from $y^\emptyset$ by a degree increasing generation, that is,
 there exist a degree increasing sequence $J=(j_1,\dots,j_m)$  and a point $c\in\C^m$
such that  $(y_0(x),\dots,y_n(x)) = Y^J(x,c)$.
Moreover,  for any other critical point of that function $\Phi(u,\bs k)$  there is a
point $c\rq{}\in\C^m$ such that
the tuple $ Y^J(x,c\rq{})$ represents that other critical point.

\end{thm}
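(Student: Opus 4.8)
The plan is to deduce the theorem from the completeness results of \cite{MV3} after matching our setup with the one studied there. First I would record, using the remark following \Ref{Master}, that $\Phi(u,\bs k)$ is exactly the master function (in the sense of \cite{SV,MV1}) attached to the Kac--Moody algebra with Cartan matrix $A$ of \Ref{A}, taken with no weight parameters, i.e.\ the $N=0$ case of \Ref{mmff}; correspondingly the fertility equations \Ref{Wr-Cr} are precisely the Wronskian equations with exponents $-a_{j,i}$ for the matrix $A$, and the elementary and multistep generation procedures of Sections \ref{Elementary generation} and \ref{sec generation procedure} coincide with those of \cite{MV1,MV2,MV3} for this Cartan datum (here $A$ is the transpose, i.e.\ the Langlands dual, of $\AT$, consistently with the duality used throughout). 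Thus the hypotheses needed to invoke \cite{MV3} are in place.

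Next I would apply the main theorem of \cite{MV3}: for a master function with no weight parameters, every tuple of polynomials representing a critical point is obtained from $y^\emptyset$ by a sequence of elementary generations, and that sequence can be taken to be degree increasing. In the language of Section \ref{sec generation procedure} this is exactly the assertion that there exist a degree increasing $J=(j_1,\dots,j_m)$ and a point $c\in\C^m$ with $(y_0,\dots,y_n)=Y^J(x,c)$. The point in being able to choose the sequence degree increasing is the analogue of the fact that a point of a flag variety lies on a Bruhat cell reached by a \emph{reduced} word: a generation step that fails to increase the degree vector does not move the tuple onto a larger stratum and can be deleted, and this reduction is carried out in \cite{MV3}.

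For the last assertion, suppose $(y_0,\dots,y_n)=Y^J(x,c)$ represents a critical point of $\Phi(u,\bs k)$. By Lemma \ref{lem gen procedure} the degree vector of $Y^J(x,c)$ is $\bs k^J$, so $\bs k=\bs k^J$. Let $(\bar y_0,\dots,\bar y_n)$ be any other critical point of the same $\Phi(u,\bs k)$; it has the same degree vector $\bs k$, and by the previous step $(\bar y_0,\dots,\bar y_n)=Y^{J'}(x,c')$ for some degree increasing $J'$ with $\bs k^{J'}=\bs k=\bs k^J$. Since by \Ref{l-transformation} the degree transformations realize the shifted action of the Weyl group $W$ of $A$, with $\bs k^\emptyset=(0,\dots,0)$ corresponding to the regular dominant weight $\rho$ (so that $\bs k^J\leftrightarrow w_J(\rho)-\rho$ for $w_J=w_{j_m}\cdots w_{j_1}$), and since the $W$-stabilizer of $\rho$ is trivial, the equality $\bs k^J=\bs k^{J'}$ forces $w_J=w_{J'}$; in particular $J$ and $J'$ are reduced words for the same element and $m=m'$. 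Invoking once more the cell structure of the population established in \cite{MV1,MV3} — the image of the generation map attached to a degree increasing sequence is the stratum determined by $w_J$, independently of the chosen reduced word — we obtain $Y^{J'}(\C^{m'})=Y^J(\C^m)$, hence $(\bar y_0,\dots,\bar y_n)=Y^J(x,c')$ for some $c'\in\C^m$.

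I expect the only real work to be bookkeeping needed to make the citation airtight: reconciling the normalizations of \cite{MV3} with those used here (the scalar product fixing the coefficients $-4,-2,8,4,2$ in \Ref{Master}, the monic normalization \Ref{normalized i} of $Y^J$, and the effect of Langlands duality on the Wronskian exponents), and extracting from \cite{MV3} the precise form of the cell decomposition, namely that the generation map for a degree increasing $J$ has image depending only on $w_J$. With those pinned down, the deduction is as sketched above.
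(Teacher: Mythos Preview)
Your proposal is correct and follows essentially the same route as the paper: both deduce the statement from the completeness results in the Mukhin--Varchenko papers rather than arguing from scratch. The paper's proof is terser---it cites Theorem~3.8 and Lemmas~3.5, 3.7 of \cite{MV2} directly for all three claims (membership in the population, degree-increasing reachability, and the ``moreover'' clause)---whereas you unpack the last clause via the Weyl-group/Bruhat-cell picture; that unpacking is exactly what those lemmas encode, so the two arguments coincide in substance.
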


By Theorem \ref{one gen} a function $\Phi(u,\bs k)$ either does not have critical points at
 all or all of its critical points form one cell $\C^m$.
	
\begin{proof}
Theorem 3.8 in \cite{MV2} says  that  
$(y_0,\dots,y_n)$  is a point of the population generated from $y^\emptyset$.  The fact that $(y_0,\dots,y_n)$  can be generated from 
$y^\emptyset$ by a degree increasing generation is a corollary of Lemmas 3.5 and 3.7 in \cite{MV2}.
The same lemmas  show that any other critical point of the master function $\Phi(u,\bs k)$ is represented by the tuple 
 $ Y^J(x,c\rq{})$  for a suitable $c\rq{}\in\C^m$.
\end{proof}

	\section{Critical points of master functions and Miura opers}
	\label{sec cr and Miu}

	\subsection{Miura oper associated with a tuple of polynomials, \cite{MV2}}
	We say that a Miura oper of type $\AT$, $\L=\der + \La^{(2)} + V$, is {\it associated to the $(n+1)$-tuple of polynomials} $y$ if
$	V=-\sum_{i=0}^{n}\ln'(y_i)\,h_i$, 
	where $\ln'(f(x))=\frac{f'(x)}{f(x)}$. If $\L$ is associated to $ y$ and
$V=\sum_{i=1}^{2n+1}v_i e_{i,i}$,  then
	\bean
	\label{v form}
\phantom{aaaaa}
	v_{i}=\ln'\frac{y_i}{y_{i-1}}=-v_{2n+2-i},
\quad i=1,\dots,n-1,
\quad
	v_n=\ln'\frac{y_n^2}{y_{n-1}}=-v_{n+2},\quad v_{n+1}=0.
	\eean
We also have
	\bean
	\label{Def eq}
	\langle \alpha_j, V\rangle = \ln' \big( \prod_{i=0}^n y_i^{-a_{i,j}} \big) ,
	\eean
where  $a_{i,j}$ are entries of the Cartan matrix of type $A^{(2)}_{2n}$.
	More precisely,
	\bean
	\label{def eq}
&&
	\langle \alpha_0, V\rangle = \ln' \big( \frac{y_1^{2}}{y_0^2}\big) ,
	\qquad
	\langle \alpha_i, V\rangle = \ln' \big( \frac{y_{i-1}y_{i+1}}{y_i^2} \big),\quad i=1,\dots,n-2,
	\\
	\notag
&&
\phantom{aaaaa}
	\langle \alpha_{n-1}, V\rangle = \ln' \big(  \frac{{y_{n-2}y_{n}^{2}}}{y_{n-1}^2}\big) ,
	\qquad
	\langle \alpha_n, V\rangle = \ln' \big( \frac{y_{n-1}}{y_n^2} \big).
	\eean
	For example,
	\bean
\label{Lem}
	\L^\emptyset: = \der +\La
	\eean
	is associated to the tuple $y^\emptyset=(1,\dots, 1)$.
	
	Define the map
	\bea
	\mu : (\C[x]\minus \{0\})^{n+1} \to \mc M(A^{(2)}_{2n}),
	\eea
	which sends a tuple $y=(y_0,\dots,y_n)$ to the Miura oper $\L=\der + \La^{(2)} + V$ associated to $y$.

	\subsection{Deformations of Miura opers of type $\AT$, \cite{MV2}}

	\begin{lem}[\cite{MV2}]
		Let $\L$ $= \der + \La^{(2)} + V$ be a Miura oper of type $\AT$.  Let $g \in \Bb$ and $j \in \{0,\dots, n\}$.  Then
		\bean \label{adeq}
		e^{\on{ad} \,g f_j} \L = \der + \La^{(2)} + V - g h_j - (g^\prime - \langle \alpha_j , V \rangle g + g^2)f_j.
		\eean
\qed
	\end{lem}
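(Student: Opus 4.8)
The statement to be proved is the gauge-transformation formula
\[
e^{\on{ad}\,g f_j}\,\L = \der + \La^{(2)} + V - g h_j - (g' - \langle\alpha_j,V\rangle g + g^2)\,f_j,
\]
for a Miura oper $\L = \der + \La^{(2)} + V$ of type $\AT$ with $V\in\Bb(\gAA^0)$. The plan is to expand $e^{\on{ad}\,g f_j}\L = \L + [g f_j,\L] + \tfrac12[g f_j,[g f_j,\L]] + \dots$ and show that the series truncates after the quadratic term, then compute the three surviving pieces using only the defining relations of $\gAA$ listed in Section \ref{sec KM2} and the fact that $\der$ acts on the coefficient functions by $[\,\der, h]= h'$ for $h\in\Bb$.

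First I would compute the single bracket. Writing $\La^{(2)} = \sum_{i=0}^n e_i$ and using $[\der, g f_j] = g' f_j$, $[e_i, f_j] = \delta_{i,j} h_j$, and $[V, f_j] = -\langle\alpha_j, V\rangle f_j$ (since $[h_i, f_j] = -a_{i,j} f_j$ and $\langle\alpha_j, \cdot\rangle$ extracts exactly that coefficient), one gets
\[
[g f_j, \L] = [g f_j, \der] + [g f_j, \La^{(2)}] + [g f_j, V] = -g' f_j + g\,[f_j, e_j + \textstyle\sum_{i\ne j}e_i] + g\,[f_j, V].
\]
The crucial observation is $[f_j, e_i] = -\delta_{i,j} h_j$, so only the $e_j$ term contributes, giving $[g f_j, \La^{(2)}] = -g h_j$; and $[g f_j, V] = g\langle\alpha_j, V\rangle f_j$. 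Thus $[g f_j, \L] = -g h_j - g' f_j + g\langle\alpha_j, V\rangle f_j$. Next I would compute the double bracket $[g f_j, [g f_j, \L]]$: applying $\on{ad}\,g f_j$ to $-g h_j - g' f_j + g\langle\alpha_j,V\rangle f_j$, the $f_j$-terms die because $[f_j, f_j] = 0$, and $[g f_j, -g h_j] = -g^2 [f_j, h_j] = -g^2\cdot(2 f_j) \cdot(-1)$... here I must be careful: $[h_j, f_j] = -a_{j,j} f_j = -2 f_j$, so $[f_j, h_j] = 2 f_j$ and $[g f_j, -g h_j] = -2 g^2 f_j$. Hence $\tfrac12[g f_j,[g f_j,\L]] = -g^2 f_j$, and $\on{ad}^3$ applied to a multiple of $f_j$ vanishes, so the exponential series terminates. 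Collecting $\L + [g f_j,\L] + \tfrac12[g f_j,[g f_j,\L]]$ yields exactly $\der + \La^{(2)} + V - g h_j - (g' - \langle\alpha_j, V\rangle g + g^2) f_j$, as claimed.

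I expect the only subtlety — not really an obstacle — to be bookkeeping the signs and the factor coming from $a_{j,j} = 2$, and checking that the Serre-type relations are not needed because every higher bracket already vanishes for the elementary reason $[f_j, f_j]=0$. One should also note that the result is a genuine Miura oper of the stated form only after observing $h_j\in\gAA^0$ and $f_j\in\gAA^{-1}$, so the added terms have the right grading; this is automatic and the lemma as stated does not even assert Miura-oper-ness, only the identity in $\tilde\g$. No result beyond the defining relations of $\gAA$ and the Leibniz rule for $\der$ is required.
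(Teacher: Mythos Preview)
Your computation is correct: the exponential series truncates after the quadratic term because $(\on{ad}\,g f_j)^2\L$ is a multiple of $f_j$, and the three surviving pieces assemble exactly into the claimed formula. The paper itself gives no proof of this lemma---it is quoted from \cite{MV2} and closed with a bare \qed---so there is nothing to compare your argument against; your direct expansion using only the defining relations of $\gAA$ is precisely the standard verification one would expect.
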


	\begin{cor}[\cite{MV2}]
		Let $\L = \der + \La^{(2)} + V$ be a Miura oper  of type $\AT$. Then $e^{\on{ad}\, g f_j } \L$
		is a Miura oper if and only if the scalar function
		$g$ satisfies the Riccati equation
		\bean
		\label{Ric}
		g' - \langle \alpha_j , V \rangle g +  g^2 = 0 \ .
		\eean
\qed
	\end{cor}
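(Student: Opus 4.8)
The plan is to read the Corollary off from formula \Ref{adeq} of the preceding Lemma together with the definition of a Miura oper of type $\AT$. Recall that such a Miura oper is by definition an operator $\der + \La^{(2)} + V'$ with $V'\in\B(\gAA^0)$; among the formal differential operators $\der + \sum_{i\le 1} p_i$ with $p_i\in\B(\gAA^i)$ it is characterized by having degree-one part exactly $\La^{(2)}$ and vanishing components in all negative degrees.

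First I would substitute into \Ref{adeq} and record the graded decomposition of the right-hand side. Since $h_j\in\gAA^0$ and $f_j$ spans a one-dimensional subspace of $\gAA^{-1}$, the operator $e^{\on{ad}\,gf_j}\L$ has degree-one part $\La^{(2)}$, degree-zero part $V-gh_j\in\B(\gAA^0)$, degree-$(-1)$ part $-(g'-\langle\alpha_j,V\rangle g+g^2)\,f_j$, and no other graded components. Because $\La^{(2)}$, $h_j$ and $f_j$ live in the distinct graded pieces $\gAA^1$, $\gAA^0$, $\gAA^{-1}$, there is no cancellation among these three terms, so $e^{\on{ad}\,gf_j}\L$ is a Miura oper if and only if its degree-$(-1)$ part vanishes.

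Next, since $f_j\neq 0$, the vanishing of $-(g'-\langle\alpha_j,V\rangle g+g^2)f_j$ is equivalent to the scalar equation $g'-\langle\alpha_j,V\rangle g+g^2=0$, which is exactly the Riccati equation \Ref{Ric}. This gives both directions at once: if $g$ solves \Ref{Ric}, then $e^{\on{ad}\,gf_j}\L=\der+\La^{(2)}+(V-gh_j)$ is a Miura oper; conversely, if $e^{\on{ad}\,gf_j}\L$ is a Miura oper it has no degree-$(-1)$ component, which forces \Ref{Ric}.

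I expect no real obstacle here, since all of the genuine computation is already packaged in the preceding Lemma. The only point requiring a word of care is the observation that the three terms appearing in \Ref{adeq} occupy distinct graded subspaces of $\gAA$, so that the Miura-oper condition decouples into the automatically satisfied statements about the degree-one and degree-zero parts and the single nontrivial condition that the $f_j$-coefficient vanish.
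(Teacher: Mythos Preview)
Your argument is correct and is exactly the reasoning the paper intends: the Corollary is marked \qed\ immediately after the Lemma giving \Ref{adeq}, so the paper treats it as an immediate consequence of that formula together with the definition of a Miura oper. Your observation that the three terms in \Ref{adeq} lie in the distinct graded pieces $\gAA^1,\gAA^0,\gAA^{-1}$, forcing the Miura-oper condition to reduce to the vanishing of the $f_j$-coefficient, is precisely the point.
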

	
	Let $\L = \der + \La^{(2)} + V$ be a Miura oper.
	For $j\in\{0,\dots,n\}$, we say that $\L$ is  {\it deformable in the $j$-th direction}
	if equation \Ref{Ric} has a nonzero solution $g$, which is a rational function.

	\begin{thm} [\cite{MV2}]
		\label{ricc thm}
		Let
		$\L = \partial  +  \La^{(2)}  +  V$ be  the  Miura oper
associated to the tuple of polynomials $y=(y_0, \dots, y_n)$.
		Let $j\in \{0,\dots,n\}$. Then $\L$ is deformable in the $j$-th direction
		if and only if there exists a polynomial $\tilde y_j$ satisfying equation
		\Ref{Wr-Cr}. Moreover, in that case any nonzero rational
		solution $g$ of the Riccati equation \Ref{Ric} has the form
		$g = \mathrm{ln}' (\tilde y_j/ y_j)$ where $\tilde y_j$ is a solution of equation 
		\Ref{Wr-Cr}. If $g = \mathrm{ln}' (\tilde y_j/ y_j)$, then
		the Miura oper
		\bean
		\label{transformation}
		e^{\on{ad}\, g f_j} \L = \partial  + \La^{(2)} + V - g  h_j
		\eean
		is associated to the tuple $y^{(j)}$, which is obtained from the tuple $y$ by replacing $y_j$ with $\tilde y_j$.

	\end{thm}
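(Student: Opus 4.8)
The plan is to reduce the statement to the scalar Riccati equation \Ref{Ric} and to linearize it. By the Corollary preceding the theorem (together with the definition of deformability), $\L$ is deformable in the $j$-th direction precisely when \Ref{Ric}, that is $g'-\langle\alpha_j,V\rangle g+g^2=0$, admits a nonzero rational solution $g$; and by \Ref{Def eq} the coefficient is a logarithmic derivative, $\langle\alpha_j,V\rangle=\ln'\big(\prod_{i=0}^n y_i^{-a_{i,j}}\big)$. The computational heart of the argument is the classical substitution $g=\ln'w=w'/w$ for a nonvanishing function $w$: then $g'+g^2=w''/w$, so \Ref{Ric} becomes $w''=\langle\alpha_j,V\rangle\,w'$, which --- since $\langle\alpha_j,V\rangle$ is a logarithmic derivative and $w'\not\equiv 0$ for nonconstant $w$ --- is equivalent to $w'=c\,\prod_{i=0}^n y_i^{-a_{i,j}}$ for some $c\in\C$. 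Because the Cartan matrix of type $\AT$ has $a_{j,j}=2$ and $a_{i,j}\le 0$ for $i\ne j$, this reads $w'=c\,\prod_{i\ne j}y_i^{-a_{i,j}}\big/y_j^2$, with a \emph{polynomial} numerator.

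For the ``if'' direction I would start from a polynomial $\tilde y_j$ solving \Ref{Wr-Cr}, set $w=\tilde y_j/y_j$ and $g=\ln'w$. Then $w'=\Wr(\tilde y_j,y_j)/y_j^2=\prod_{i\ne j}y_i^{-a_{i,j}}\big/y_j^2=\prod_{i=0}^n y_i^{-a_{i,j}}$, so by the computation above $g$ is a nonzero rational solution of \Ref{Ric} and $\L$ is deformable in the $j$-th direction. For the ``only if'' direction, together with the ``moreover'' assertion, I would take any nonzero rational solution $g$ of \Ref{Ric} and reconstruct $w$ with $g=\ln'w$ by declaring $w':=c\,\prod_{i=0}^n y_i^{-a_{i,j}}$ (forced by the linearization) and $w:=w'/g$; both are rational and $g=\ln'w$ holds by construction. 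Put $\tilde y_j:=w\,y_j$; then $\tilde y_j$ is rational and $\Wr(\tilde y_j,y_j)=w'\,y_j^2=c\,\prod_{i\ne j}y_i^{-a_{i,j}}$, a scalar multiple of the right-hand side of \Ref{Wr-Cr}. Since each polynomial of a tuple is taken up to rescaling, $\tilde y_j$ solves \Ref{Wr-Cr} as soon as one knows it is a polynomial, and then $g=\ln'(\tilde y_j/y_j)$ has the asserted form. The final assertion is then immediate: feeding a solution $g$ of \Ref{Ric} into \Ref{adeq} annihilates the $f_j$-term, giving $e^{\on{ad}\,g f_j}\L=\der+\La^{(2)}+V-g\,h_j$, and $V-g\,h_j=V-\ln'(\tilde y_j/y_j)\,h_j$ is, by the definition of the Miura oper associated to a tuple, exactly the potential attached to $y^{(j)}=(y_0,\dots,\tilde y_j,\dots,y_n)$.

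The step I expect to be the main obstacle is showing that the rational function $\tilde y_j=w\,y_j$ produced in the ``only if'' direction is genuinely a polynomial. I would argue locally at the roots of $y_j$ (the only possible poles of $w'$, hence of the rational antiderivative $w$): at such a root $x_0$, genericity of the tuple $y$ --- simple roots of $y_j$, and no common root of $y_j$ with the neighbouring $y_i$ for which $a_{i,j}\ne 0$ --- forces the polynomial numerator $\prod_{i\ne j}y_i^{-a_{i,j}}$ to be nonzero at $x_0$, so $w'$ has a double pole there; rationality of $w=w'/g$ rules out a logarithmic term in $\int w'\,dx$, so $w$ has at most a simple pole at $x_0$, and multiplying by $y_j$, which vanishes to first order at $x_0$, gives a function regular at $x_0$, hence regular everywhere. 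This local count, which uses both the sign pattern of the Cartan matrix $\AT$ (diagonal entry $2$, non-positive off-diagonal entries) and the genericity of $y$, is the only non-formal ingredient; everything else is bookkeeping around the substitution $g=\ln'w$ and the already established formula \Ref{adeq}.
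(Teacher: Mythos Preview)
The paper does not give its own proof of this theorem; it is quoted from \cite{MV2} and used as a black box. Your argument is essentially the standard one from \cite{MV2}: linearize the Riccati equation via $g=\ln'w$, integrate once using $\langle\alpha_j,V\rangle=\ln'\big(\prod_i y_i^{-a_{i,j}}\big)$, and identify $w$ with $\tilde y_j/y_j$. Two small points. First, with the paper's convention $\Wr(f,g)=fg'-f'g$ one has $(\tilde y_j/y_j)'=-\Wr(\tilde y_j,y_j)/y_j^{\,2}$, not $+$; this is harmless since tuples are taken up to nonzero scalars. Second, your construction $w:=w'/g$ is clean and does give $\ln'w=g$, since $w''/w'=\langle\alpha_j,V\rangle$ and $g'/g=\langle\alpha_j,V\rangle-g$.

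Your instinct about the ``main obstacle'' is exactly right, and it is not merely a technicality: genericity of $y$ is a genuine hypothesis, implicit here but explicit in \cite{MV2}. Without it the equivalence fails. For instance, take $y_j=x^2$ and $y_i=1$ for $i\ne j$. Then $\langle\alpha_j,V\rangle=-4/x$, the linearized equation gives $w'=c/x^4$, $w=-c/(3x^3)+d$, and $g=w'/w$ is a nonzero rational solution of \Ref{Ric}; yet $\tilde y_j=wy_j=-c/(3x)+dx^2$ is never a polynomial for $c\ne 0$, and one checks directly that $\Wr(\tilde y_j,x^2)=1$ has no polynomial solution. So $\L$ is deformable in the $j$-th direction while no polynomial $\tilde y_j$ satisfying \Ref{Wr-Cr} exists. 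Your local analysis at the (simple) zeros of $y_j$ is precisely what rescues the argument under the genericity assumption used throughout Sections~\ref{PRCP}--\ref{sec generation procedure}: $w'$ then has only double poles, rationality of the antiderivative $w=w'/g$ kills the residues, and multiplication by $y_j$ clears the resulting simple poles. With that hypothesis in place your proof is complete and matches the approach of \cite{MV2}.
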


	\subsection{Miura opers associated with the generation procedure}
	
	\label{Miura opers with cr points}

	Let $J = (j_1,\dots,j_m)$ be a degree increasing sequence, see Section \ref{sec degree transf}.
	Let $Y^J : \C^m \to (\C[x])^{n+1} $
	be the generation of tuples from $y^\emptyset$ in the $J$-th direction.
	We define the associated family of Miura opers by the formula:
	\bea
	\mu^J \  :\ \C^m\ \to\ \mc M(\AT), \qquad c \ \mapsto \ \mu(Y^J(c)) .
	\eea
	The map $\mu^J$ is called the {\it generation of Miura opers from $\Ll^\emptyset$
		in the $J$-th direction,}  see $\Ll^\emptyset$ in \Ref{Lem}.

	For $\ell=1,\dots,m$, denote $J_\ell=(j_1,\dots,j_\ell)$ the beginning $\ell$-interval of
	the sequence $J$. Consider the associated map $Y^{J_\ell} : \C^\ell\to(\C[x])^{n+1}$. Denote
	\bea
	Y^{J_\ell}(c_1,\dots,c_\ell) = (y_0(x,c_1,\dots,c_\ell, \ell),\dots, y_n(x,c_1,\dots,c_\ell, \ell)).
	\eea
	Introduce
	\bean
	\label{g's}
	g_1(x,c_1,\dots,c_m) &=&
	\ln'(y_{j_1}(x,c_1,1)) ,
	\\
	\notag
	g_\ell(x,c_1,\dots,c_m) &=&
	\ln'(y_{j_\ell}(x,c_1.\dots,c_\ell,\ell)) - \ln'(y_{j_\ell}(x,c_1,\dots,c_{\ell-1},\ell-1)),
	\eean
	for $\ell=2,\dots,m$.
	For $c\in\C^m$, define  $U^J(c)=\sum_{i<0} (U^J(c))_i$, $ (U^J(c))_i\in\B(\g(A^{(2)}_{2n})^i)$, depending on $c\in\C^m$,
	by the formula
	\bean
	\label{T}
	e^{-\,\ad U^J(c)} = e^{\ad g_m(x,c) f_{j_m}} \cdots e^{\ad g_1(x,c) f_{j_1}}.
	\eean

	\begin{lem}
		\label{lem formula} For $c\in\C^m$, we have
		\bean
		\label{operformula}
		\mu^J(c)\ &=& \ e^{-\,\ad U^J(c)}(\Ll^\8),
\\
		\label{oper2}
		\mu^J(c)\ &=&\
		\der + \La^{(2)} -\sum_{\ell=1}^m g_\ell(x,c) h_{j_\ell}.
		\eean
	\end{lem}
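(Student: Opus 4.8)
The plan is to prove the two formulas \Ref{operformula} and \Ref{oper2} together by induction on $m$, exploiting the recursive structure of the generation procedure built in Section \ref{sec generation procedure} together with Theorem \ref{ricc thm}, which describes exactly what a single elementary generation step does to the associated Miura oper. The base case $m=0$ is the statement $\mu^\emptyset(\mathrm{pt}) = \L^\emptyset = \der + \La^{(2)}$, which is \Ref{Lem}, and here $U^\emptyset = 0$, so both formulas hold trivially.

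For the inductive step, suppose the lemma holds for $\tilde J = (j_1,\dots,j_{m-1})$, and write $Y^{\tilde J}(\tilde c) = (y_0(x,\tilde c,m-1),\dots,y_n(x,\tilde c,m-1))$ with associated Miura oper $\mu^{\tilde J}(\tilde c) = \der + \La^{(2)} - \sum_{\ell=1}^{m-1} g_\ell h_{j_\ell}$. By definition \Ref{Ja}, $Y^J(\tilde c, c_m)$ is obtained from $Y^{\tilde J}(\tilde c)$ by replacing the $j_m$-th polynomial $y_{j_m}(x,\tilde c,m-1)$ with $y_{j_m,0}(x,\tilde c) + c_m y_{j_m}(x,\tilde c)$, i.e.\ by an elementary generation in the $j_m$-th direction. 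Theorem \ref{ricc thm} then says that $\mu^J(\tilde c,c_m) = \mu(Y^J(\tilde c,c_m))$ equals $e^{\on{ad}\, g\, f_{j_m}}\big(\mu^{\tilde J}(\tilde c)\big)$ where $g = \ln'(\tilde y_{j_m}/y_{j_m})$ with $\tilde y_{j_m}$ the new polynomial, and that this transformed oper is precisely $\mu^{\tilde J}(\tilde c) - g\, h_{j_m}$. Comparing $g = \ln'(\tilde y_{j_m}) - \ln'(y_{j_m})= \ln'(y_{j_m}(x,\tilde c,c_m,m)) - \ln'(y_{j_m}(x,\tilde c,m-1))$ with the definition \Ref{g's} of $g_m$, we identify $g = g_m(x,c)$. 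This immediately gives \Ref{oper2}: $\mu^J(c) = \mu^{\tilde J}(\tilde c) - g_m h_{j_m} = \der + \La^{(2)} - \sum_{\ell=1}^m g_\ell h_{j_\ell}$.

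To get \Ref{operformula}, I would combine the just-established relation $\mu^J(c) = e^{\on{ad}\,g_m f_{j_m}}(\mu^{\tilde J}(\tilde c))$ with the inductive hypothesis $\mu^{\tilde J}(\tilde c) = e^{-\ad U^{\tilde J}(\tilde c)}(\L^\emptyset)$, yielding $\mu^J(c) = e^{\on{ad}\,g_m f_{j_m}} e^{-\ad U^{\tilde J}(\tilde c)}(\L^\emptyset)$. On the other hand, the defining relation \Ref{T} reads $e^{-\ad U^J(c)} = e^{\ad g_m f_{j_m}} e^{\ad g_{m-1} f_{j_{m-1}}} \cdots e^{\ad g_1 f_{j_1}} = e^{\ad g_m f_{j_m}} e^{-\ad U^{\tilde J}(\tilde c)}$, so the two expressions agree and $\mu^J(c) = e^{-\ad U^J(c)}(\L^\emptyset)$, as claimed. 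One small point to check is that $e^{-\ad U^J(c)}$ is genuinely of the form $e^{\ad U}$ with $U = \sum_{i<0} U_i$, $U_i \in \B(\g(A^{(2)}_{2n})^i)$, so that $U^J(c)$ is well defined by \Ref{T}; this follows because each $f_{j_\ell} \in \g(A^{(2)}_{2n})^{-1}$ and each $g_\ell$ is a scalar function, so the product of the exponentials is an automorphism of the type described just before Proposition \ref{Prop U}, and the Baker–Campbell–Hausdorff combination of negative-degree elements stays in negative degree.

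The main obstacle is the bookkeeping in matching $g$ from Theorem \ref{ricc thm} with $g_m$ from \Ref{g's}: one must be careful that the polynomials produced by the normalized generation \Ref{Ja} are exactly the $\tilde y_{j_m}$ appearing in Theorem \ref{ricc thm} (up to the irrelevant overall scalar, which does not affect $\ln'$), and that the "intermediate" tuple $Y^{J_{m-1}}$ used in \Ref{g's} coincides with $Y^{\tilde J}$ used in the induction. Both are true by construction, but writing the identification cleanly — especially tracking that $\ln'$ kills the nonzero constant ambiguity in each polynomial and that the degree-increasing hypothesis guarantees $\tilde y_{j_m}$ has the expected degree for all $c_m$ (Lemma \ref{lem gen procedure}) so that $g_m$ is a bona fide rational function — is where the care is needed. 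Everything else is a direct unwinding of definitions.
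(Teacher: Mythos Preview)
Your proof is correct and follows the same approach as the paper, which simply states ``The lemma follows from Theorem \ref{ricc thm}.'' Your induction on $m$ with Theorem \ref{ricc thm} supplying the inductive step is exactly the unwinding of that sentence, and your care in matching $g$ with $g_m$ and verifying that $U^J(c)$ is well defined fills in details the paper leaves to the reader.
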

	
	\begin{proof}
		The lemma follows from Theorem \ref{ricc thm}.
	\end{proof}

	\begin{cor}
		\label{cor der}
		Let $r>0$, odd and $r\neq n+1$ mod $4n+2$. Let $c\in\C^m$. Let $\frac {\der\phantom{a}}{\der t_r}\big|_{\mu^J(c)}$ be the value at $\mu^J(c)$
		of the vector field of the
		$r$-th mKdV flow on the space $\mc M(A^{(2)}_{2n})$, see \Ref{mKdVr}. Then
		\bean
		\label{T mkdv}
		\frac {\der}{\der t_r}\Big|_{\mu^J(c)} = - \frac {\der}{\der x}\Big(e^{-\,\ad U^J(c)}(\La^{(2)})^r\Big)^0.
		\eean

	\end{cor}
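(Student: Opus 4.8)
The plan is to combine Lemma \ref{lem der2} with the characterization of the element $\phi(\La^{(2)}_r)$ given just before equation \Ref{mKdVr}, using the explicit gauge transformation provided by Lemma \ref{lem formula}. Recall that by Lemma \ref{lem der2} the $r$-th mKdV flow at any Miura oper $\L$ of type $\AT$ satisfies $\frac{\der\L}{\der t_r}=-\frac{d}{dx}\,\phi(\La^{(2)}_r)^0$, where $\phi(\La^{(2)}_r)=e^{-\ad U}(\La^{(2)}_r)$ for any $U=\sum_{i<0}U_i$, $U_i\in\B(\gAA^i)$, bringing $\L$ to the canonical form $\L_0=\der+\La^{(2)}+H$ with $H\in\B(\oplus_{j<0}\zAA^j)$ as in Proposition \ref{Prop U}. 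So the whole content of the corollary is the identity $\phi(\La^{(2)}_r)=e^{-\ad U^J(c)}(\La^{(2)}_r)$ at the point $\L=\mu^J(c)$, i.e.\ that the specific $U^J(c)$ produced by the generation procedure is an admissible choice of $U$ in Proposition \ref{Prop U} for the Miura oper $\mu^J(c)$; and then one observes $\La^{(2)}_r=(\La^{(2)})^r$ as elements of $\zAA^r$ (for $r$ odd, $r\neq n+1$ mod $4n+2$), which rewrites the right-hand side in the stated form $-\frac{\der}{\der x}\bigl(e^{-\ad U^J(c)}(\La^{(2)})^r\bigr)^0$.

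First I would record, from Proposition \ref{Prop U}, that $\phi(\La^{(2)}_r)$ is independent of the choice of $U$: if $U,\tilde U$ are two admissible elements then $e^{\ad U}e^{-\ad\tilde U}=e^{\ad T}$ with $T=\sum_{j<0}T_j$, $T_j\in\zAA^j$, and since $\zAA$ is abelian and $\La^{(2)}_r\in\zAA^r$ we get $e^{-\ad U}(\La^{(2)}_r)=e^{-\ad\tilde U}e^{\ad T}(\La^{(2)}_r)=e^{-\ad\tilde U}(\La^{(2)}_r)$. Next, by Lemma \ref{lem formula}, equation \Ref{operformula}, we have $\mu^J(c)=e^{-\ad U^J(c)}(\Ll^\8)=e^{-\ad U^J(c)}(\der+\La^{(2)})$, where $U^J(c)=\sum_{i<0}(U^J(c))_i$ with $(U^J(c))_i\in\B(\gAA^i)$ by construction \Ref{T} (each factor $e^{\ad g_\ell f_{j_\ell}}$ has $g_\ell f_{j_\ell}\in\B(\gAA^{-1})$, so the Baker–Campbell–Hausdorff combination lands in $\bigoplus_{i<0}\B(\gAA^i)$). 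Thus $e^{\ad U^J(c)}(\mu^J(c))=\der+\La^{(2)}=\der+\La^{(2)}+0$, which is exactly the canonical form of Proposition \ref{Prop U} with $H=0\in\B(\oplus_{j<0}\zAA^j)$. Hence $U^J(c)$ is an admissible choice of $U$ for $\L=\mu^J(c)$, and by the independence just established $\phi(\La^{(2)}_r)=e^{-\ad U^J(c)}(\La^{(2)}_r)$ at $\mu^J(c)$.

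Finally I would identify $\La^{(2)}_r$ with $(\La^{(2)})^r$: in the realization $L(\slt,\si_0)$ of Section \ref{R gAA} one has $\La^{(2)}=B_1$ and, for $r$ odd with $r\neq 2n+1$ mod $4n+2$, $\La^{(2)}_r=B_r=(B_1)^r=(\La^{(2)})^r$, as stated in the discussion of the elements $B_{(4n+2)m\pm j}$ (and $\La^{(2)}_r$ generates $\zAA^r$); for the excluded residues $\La^{(2)}_r=0=(\La^{(2)})^r$ is consistent but those $r$ are outside the hypothesis anyway. Substituting into Lemma \ref{lem der2} gives $\frac{\der}{\der t_r}\big|_{\mu^J(c)}=-\frac{d}{dx}\bigl(\phi(\La^{(2)}_r)\bigr)^0=-\frac{\der}{\der x}\bigl(e^{-\ad U^J(c)}(\La^{(2)})^r\bigr)^0$, which is \Ref{T mkdv}. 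The only mildly delicate point — the main obstacle, such as it is — is checking carefully that $U^J(c)$ has the required shape $\sum_{i<0}(U^J(c))_i$ with homogeneous components in $\B(\gAA^i)$ so that Proposition \ref{Prop U}'s uniqueness clause applies verbatim; this is immediate from \Ref{T} and the grading $\deg f_{j_\ell}=-1$, together with the fact that $H=0$ here trivially lies in $\B(\oplus_{j<0}\zAA^j)$.
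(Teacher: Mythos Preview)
Your proof is correct and follows essentially the same approach as the paper, which simply says the corollary ``follows from \Ref{mKdVr 0} and \Ref{operformula}.'' You have spelled out in detail what the paper leaves implicit: that $U^J(c)$ is an admissible choice in Proposition \ref{Prop U} (since $e^{\ad U^J(c)}(\mu^J(c))=\der+\La^{(2)}$ with $H=0$), and that $\La^{(2)}_r=(\La^{(2)})^r$ in the matrix realization.
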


	\begin{proof}
		The corollary follows from \Ref{mKdVr 0} and \Ref{operformula}.
	\end{proof}

	We have the natural  embedding $\mc M(\AT) \hookrightarrow \mc M(\At)$, see 
Section \ref{sec sub}. 
	Let $J=(j_1,j_2$, \dots, $j_m)$. Denote 
 ${\tilde J}=(j_1,\dots,j_{m-1})$. Consider the associated family
	$\mu^{{\tilde J}} : \C^{m-1}\to\mc M(\AT)$. Denote ${\tilde c}=(c_1,\dots,c_{m-1})$.

	\begin{prop}
\label{prop77}
	For any $r>0$  the difference
$	\frac {\der}{\der t_r}\big|_{\mu^J(c)} - \frac {\der}{\der t_r}\big|_{\mu^{\tilde{J}}(\tilde c)}$
has the following form for some scalar functions  $u_1(x,c)$, $u_2(x,c)$, $u_3(x,c)$:
\begin{enumerate}
\item[(i)]
if $j_m\in\{1,2,\dots,n-1\}$, then
	\bean
\label{k1}
	\frac {\der}{\der t_r}\big|_{\mu^J(c)} -\frac {\der}{\der t_r}\big|_{\mu^{\tilde{J}}(\tilde{c})}
&=&
 u_1(x,c)(e_{j_m+1,j_m+1}-e_{j_m,j_m})
\\
\notag
&+&
u_2(x,c)(e_{2n+2-j_m,2n+2-j_m}-e_{2n+1-j_m,2n+1-j_m}),
	\eean

\item[(ii)]  if $j_m=0$, then
\bean
\label{k2}
	\frac {\der}{\der t_r}\big|_{\mu^J(c)} -\frac {\der}{\der t_r}\big|_{\mu^{\tilde{J}}(\tilde{c})}
=
 u_1(x,c)(e_{2n+1,2n+1}-e_{1,1}),
\eean

\item[(iii)] if $j_m=n$, then
\bean
\label{k3}
	\frac {\der}{\der t_r}\big|_{\mu^J(c)} -\frac {\der}{\der t_r}\big|_{\mu^{\tilde{J}}(\tilde{c})}
=
 u_1(x,c)e_{n,n} +  u_2(x,c)e_{n+1,n+1}+  u_3(x,c)e_{n+2,n+2}.
\eean

\end{enumerate}
	\end{prop}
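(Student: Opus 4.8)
The plan is to compute the difference $\frac{\der}{\der t_r}\big|_{\mu^J(c)} - \frac{\der}{\der t_r}\big|_{\mu^{\tilde J}(\tilde c)}$ directly from the formula in Corollary \ref{cor der}. Write $r$-th mKdV vector field at $\mu^J(c)$ as $-\frac{\der}{\der x}\big(e^{-\ad U^J(c)}(\La^{(2)})^r\big)^0$, and similarly at $\mu^{\tilde J}(\tilde c)$ with $U^{\tilde J}(\tilde c)$ in place of $U^J(c)$. By \Ref{T}, $e^{-\ad U^J(c)} = e^{\ad g_m f_{j_m}}\, e^{-\ad U^{\tilde J}(\tilde c)}$, where the last factor is $e^{\ad g_{m-1}f_{j_{m-1}}}\cdots e^{\ad g_1 f_{j_1}}$. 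So I would set $P = e^{-\ad U^{\tilde J}(\tilde c)}(\La^{(2)})^r$ and observe that $e^{-\ad U^J(c)}(\La^{(2)})^r = e^{\ad g_m f_{j_m}}(P)$. Then the difference of the degree-zero components is $\big(e^{\ad g_m f_{j_m}}(P)\big)^0 - P^0$, and the whole difference of vector fields is $-\frac{\der}{\der x}$ of that. The entire computation is thus reduced to understanding how conjugation by a single $e^{\ad g f_j}$ changes the degree-zero part of an element of $\tilde\g(\AT)$.

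Next I would expand $e^{\ad g f_j}(P) = P + [gf_j, P] + \tfrac12[gf_j,[gf_j,P]] + \cdots$ and extract the degree-zero component. Since $f_j$ has degree $-1$, the term $[gf_j, P]$ contributes to degree zero only through the degree-$1$ part $P_1$ of $P$; the term $[gf_j,[gf_j,P]]$ contributes through $P_2$; and in general the $k$-th term contributes through $P_k$. For $j\in\{1,\dots,n-1\}$ the relevant $f_j$ is (via $\rho$ in Section \ref{sec sub}, realized as a matrix) supported on $e_{j,j+1}+e_{2n+1-j,2n+2-j}$, so $[f_j,\cdot]$ landing in degree zero produces only diagonal matrix entries in positions $j,j+1,2n+1-j,2n+2-j$; moreover $(f_j)^2=0$ in the relevant blocks so the series truncates after the linear term. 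This yields precisely the two-term diagonal shape \Ref{k1}. For $j_m=0$, $f_0$ is supported on $e_{2n+1,1}$, again with square zero, giving the single term \Ref{k2} in positions $1$ and $2n+1$. For $j_m=n$, $f_n = 2(F_n+F_{n+1})$ is supported on $2(e_{n,n+1}+e_{n+1,n+2})$, whose square is nonzero (it hits $e_{n,n+2}$), so the series runs to the quadratic term, producing diagonal entries in positions $n,n+1,n+2$ — the shape \Ref{k3}. Packaging the resulting coefficients, together with their $x$-derivatives absorbed into $u_1,u_2,u_3$, gives the claimed forms.

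The main technical point — and the only place real care is needed — is controlling that no other diagonal positions appear and that the series genuinely truncates. This is exactly what Lemma \ref{lem exp} and Lemma \ref{lem lambda} are for: Lemma \ref{lem exp} gives the closed forms $e^{gF_0}$, $e^{g(F_i+F_{2n+1-i})}$, and $e^{g2(F_n+F_{n+1})}$ as finite expressions in $(\La^{(1)})^{-1}$, and Lemma \ref{lem lambda} lets one move the diagonal factors past powers of $\La^{(1)}$ so that, after conjugation, the degree-zero part is read off as a genuine diagonal matrix supported on the stated indices. So I would phrase the argument as: $e^{\ad g f_j}(P) = e^{gf_j}\,P\,e^{-gf_j}$ (using the matrix realization, as noted after the definition of $e^{\ad U}$ in Section \ref{sec MKDV}), substitute the closed forms from Lemma \ref{lem exp}, collect the degree-zero (diagonal) terms using Lemma \ref{lem lambda}, and verify the support. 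The coefficients $u_1,u_2,u_3$ are then explicit rational expressions in $g_m$ and in the degree-$1$ and degree-$2$ components of $P = e^{-\ad U^{\tilde J}(\tilde c)}(\La^{(2)})^r$, but since the proposition only asserts existence of such functions, I would not need to write them out.

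The step I expect to be the main obstacle is bookkeeping the block/index structure when $j_m=n$: here the quadratic term in the exponential is present, $f_n$ acts on a $3\times 3$-type block straddling the center, and one must check that after conjugating $(\La^{(2)})^r$ — which itself is a sum of the $\La^{(2)}_s$ with various shift structure — the degree-zero output stays confined to positions $n,n+1,n+2$ and does not leak into neighbors. The identities \Ref{formula La} and the explicit form ${(\La^{(1)})}^{-1}= \sum_{i=1}^{2n}e_{i,i+1}+\la^{-1}e_{2n+1,1}$ make this a finite, if slightly intricate, matrix check. Everything else — the three cases, the truncation of the exponential series, the reduction to $P_1$ and $P_2$ — is routine once the single-conjugation computation is in hand.
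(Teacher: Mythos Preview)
Your approach is essentially the same as the paper's: set $A_r=e^{-\ad U^{\tilde J}(\tilde c)}(\La^{(2)})^r$, conjugate by the single factor $e^{g_m f_{j_m}}$ using the closed forms of Lemma~\ref{lem exp}, and extract the degree-zero part via Lemma~\ref{lem lambda}. One small imprecision: for $j_m\in\{1,\dots,n-1\}$ (and $j_m=0$) the matrix exponential $e^{g_m f_{j_m}}=1+g_m f_{j_m}$ truncates, but the \emph{conjugation} $(1+g_m f_{j_m})A_r(1-g_m f_{j_m})$ still has the cross term $-g_m^2 f_{j_m} A_r f_{j_m}$; the paper checks explicitly, using Lemma~\ref{lem lambda}, that its degree-zero part is $g_m^2(e_{j_m,j_m}+e_{2n+1-j_m,2n+1-j_m})[\La^{-1}A_r\La^{-1}]^0(e_{j_m+1,j_m+1}+e_{2n+2-j_m,2n+2-j_m})=0$, so the paired shape \Ref{k1} comes out, not merely a diagonal matrix supported on those four positions. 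Your plan to ``verify the support'' with Lemma~\ref{lem lambda} is exactly this computation.
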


	\begin{proof}
We will write $\La$ for $\La^{(2)}=\La^{(1)}$. Denote
\bea
A_r
= e^{g_{m-1}f_{j_{m-1}}}\dots e^{g_{1}f_{j_{1}}} \La^r
e^{-g_1f_{j_1}}
\dots e^{-g_{m-1} f_{j_{m-1}}}.
\eea
Expand  $A_r = \sum_{i} A_r^i \La^i$ where $A_r^i = \sum_{l=1}^{2n+1}
A_r^{i,l}e_{l,l}$ with scalar coefficients $A_r^{i,l}$. 
	Then 
$\frac {\der}{\der t_r}\big|_{\mu^{\tilde{J}}(\tilde{c})}=-\frac{\der}{\der x} A_r^{0}$.
	Assume that $j_m\in\{1,\dots,n-1\}$. Then
	\bea
&&	\frac {\der}{\der t_r}\big|_{\mu^J(c)} =- \frac {\der}{\der x}\big[(1+g_m(e_{j_m,j_m}+e_{2n+1-j_m,2n+1-j_m})\La^{-1})A_r
\\
&&
\times
(1-g_m(e_{j_m,j_m}+e_{2n+1-j_m,2n+1-j_m})\La^{-1})\big]^0
	=- \frac {\der}{\der x}A_r^0
\\
&&	- \frac {\der}{\der x}\big[g_m(e_{j_m,j_m}+e_{2n+1-j_m,2n+1-j_m})\La^{-1}A_r\big]^0\\
&&	+\frac {\der}{\der x}\big[A_rg_m(e_{j_m,j_m}+e_{2n+1-j_m,2n+1-j_m})\La^{-1}\big]^0\\
&&	+ \frac {\der}{\der x}\big[g_m(e_{j_m,j_m}+e_{2n+1-j_m,2n+1-j_m})\La^{-1}A_rg_m(e_{j_m,j_m}+e_{2n+1-j_m,2n+1-j_m})\La^{-1}\big]^0.
	\eea
The last term is zero since
	\bea
	&&
\big[g_m(e_{j_m,j_m}+e_{2n+1-j_m,2n+1-j_m})\La^{-1}A_rg_m(e_{j_m,j_m}+e_{2n+1-j_m,2n+1-j_m})\La^{-1}\big]^0
\\
&&	=g_m^2(e_{j_m,j_m}+e_{2n+1-j_m,2n+1-j_m})[\La^{-1}A_r\La^{-1}]^0(e_{j_m+1,j_m+1}+e_{2n+2-j_m,2n+2-j_m})=0,
	\eea
and we get
	\bea
	\frac {\der}{\der t_r}\Big|_{\mu^J(c)} - \frac {\der}{\der t_r}\Big|_{\mu^{\tilde{J}}(\tilde{c})}
&=&
g_mA^{1,j_m+1}_r(e_{j_m+1,j_m+1} - e_{j_m,j_m})
\\
&+& 
g_mA^{1,2n+2-j_m}_r(e_{2n+2-j_m,2n+2-j_m} - e_{2n+1-j_m,2n+1-j_m}).
\eea
The cases  $j_m=0, n$ are proved similarly.
	\end{proof}

Let $\frak m_i : \mc M(\At)\to \D,\ \Ll \mapsto L_i,$ be the Miura maps
	defined in Section \ref{sec Miura maps} for  $i=0,\dots,2n$. Below we consider the composition of the embedding $\mc M(\AT) \hookrightarrow \mc M(\At)$
and a Miura map.

	\begin{lem}
		\label{lem j j'}
	If $j_m=0$,  we have  $\frak m_i\circ \mu^J({\tilde c},c_m) = \frak m_i\circ \mu^{{\tilde J}}({\tilde c})$ for all $i\neq 0$. If $j_m=1,\dots,n$,
we have $\frak m_i\circ \mu^J({\tilde c},c_m) = \frak m_i\circ \mu^{{\tilde J}}({\tilde c})$ for all $i\neq j_m, 2n+1-j_m$.
	\end{lem}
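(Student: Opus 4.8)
The plan is to derive this from the gauge-invariance of the Miura maps, Theorem \ref{thm gaugemiura}, by exhibiting $\mu^J(\tilde c,c_m)$ and $\mu^{\tilde J}(\tilde c)$ as $i$-gauge equivalent Miura opers for every $i$ outside the stated exceptional set. First I would use Lemma \ref{lem formula} and formula \Ref{T} to observe that the generation step in the $j_m$-th direction is a single gauge transformation: since the functions $g_1,\dots,g_{m-1}$ appearing in \Ref{T} depend only on $c_1,\dots,c_{m-1}$ (they are built from the tuples $Y^{J_\ell}$ with $\ell\le m-1$, which coincide for $J$ and $\tilde J$), one gets
$$
\mu^J(\tilde c,c_m)\ =\ e^{\on{ad}\, g_m(x,\tilde c,c_m)\, f_{j_m}}\bigl(\mu^{\tilde J}(\tilde c)\bigr),
$$
where $g_m=\ln'(\tilde y_{j_m}/y_{j_m})$ is the rational solution of the Riccati equation provided by Theorem \ref{ricc thm}, so the exponential is a well-defined automorphism.

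Next I would pass to the realization $\g(\AT)\subset\gA$ of Section \ref{sec sub}. Since $\rho$ is an embedding of graded Lie algebras, $e^{\on{ad}\, g_m f_{j_m}}$ computed in $\g(\AT)$ agrees, after applying $\rho$, with $e^{\on{ad}\, g_m\rho(f_{j_m})}$ acting on the image of $\mu^{\tilde J}(\tilde c)$ in $\mc M(\At)$. Now the key combinatorial point: $\rho(f_0)=F_0$, $\rho(f_i)=F_i+F_{2n+1-i}$ for $i=1,\dots,n-1$, and $\rho(f_n)=2(F_n+F_{n+1})$, while $\n^-_i\subset\gA$ is by definition generated by the $F_k$ with $k\ne i$. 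Hence $\rho(f_{j_m})$ lies in $\n^-_i$ for exactly the indices excluded in the statement --- all $i\ne0$ when $j_m=0$; all $i\notin\{j_m,2n+1-j_m\}$ when $1\le j_m\le n-1$; and all $i\notin\{n,n+1\}$ when $j_m=n$, noting $2n+1-n=n+1$. For each such $i$ the function-valued element $g_m\,\rho(f_{j_m})$ lies in $\Bb(\n^-_i)$.

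Finally, since every Miura oper is an $i$-oper, $\mu^{\tilde J}(\tilde c)$ and $\mu^J(\tilde c,c_m)=e^{\on{ad}\,(g_m\rho(f_{j_m}))}\bigl(\mu^{\tilde J}(\tilde c)\bigr)$ are $i$-gauge equivalent Miura opers, so Theorem \ref{thm gaugemiura} yields $\frak m_i\circ\mu^J(\tilde c,c_m)=\frak m_i\circ\mu^{\tilde J}(\tilde c)$ for all the $i$ in question. I expect the only point requiring care to be matching the exceptional index sets, i.e.\ checking that $\rho(f_{j_m})\in\n^-_i$ for \emph{precisely} the claimed $i$ and that no earlier generation step $g_\ell f_{j_\ell}$ ($\ell<m$) disturbs the identity $\mu^J(\tilde c,c_m)=e^{\on{ad}\,g_m f_{j_m}}(\mu^{\tilde J}(\tilde c))$; this is bookkeeping, and the rest is immediate from the quoted results.
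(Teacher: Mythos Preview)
Your proposal is correct and follows essentially the same approach as the paper: the paper's proof is the one-line remark that the lemma ``follows from formula \Ref{operformula} and Theorem \ref{thm gaugemiura},'' and your argument is precisely the unpacking of that remark---expressing $\mu^J(\tilde c,c_m)=e^{\on{ad}\,g_m f_{j_m}}(\mu^{\tilde J}(\tilde c))$ via \Ref{T}--\Ref{operformula} and then checking that $\rho(f_{j_m})\in\n^-_i$ for the stated $i$ so that Theorem \ref{thm gaugemiura} applies. The bookkeeping points you flag (independence of $g_1,\dots,g_{m-1}$ from $c_m$, and the case analysis $j_m=0$, $1\le j_m\le n-1$, $j_m=n$) are exactly what the paper suppresses.
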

	
	\begin{proof}
		The lemma follows from formula \Ref{operformula} and Theorem \ref{thm gaugemiura}.
	\end{proof}

	\begin{lem}
		\label{lem der c-m}
		If $j_m=0$, then
		\bean
		\label{form der c-m 0}
		\frac{\der \mu^J}{\der c_m}({\tilde c},c_m) \ =\ -a\
		\frac{y_{1}(x,{\tilde c},m-1)^2}
		{y_{0}(x,{\tilde c},c_m,m)^2} \,h_{0}
		\eean
		for some  positive integer $a$. If $j_m=1,\dots,n-2$, then
		\bean
		\label{form der c-m i}
		\frac{\der \mu^J}{\der c_m}({\tilde c},c_m) \ =\ -a\
		\frac{y_{j_{m}-1}(x,{\tilde c},m-1)y_{j_{m}+1}(x,{\tilde c},m-1)}
		{y_{j_{m}}(x,{\tilde c},c_m,m)^2} \,h_{j_{m}}
		\eean
		for some  positive integer $a$. If $j_m=n-1$, then
		\bean
		\label{form der c-m n-1}
		\frac{\der \mu^J}{\der c_m}({\tilde c},c_m) \ =\ -a\
		\frac{y_{n-2}(x,{\tilde c},m-1)y_{n}^{2}(x,{\tilde c},m-1)}
		{y_{n-1}(x,{\tilde c},c_m,m)^2} \,h_{n-1}
		\eean
		for some  positive integer $a$.  If $j_m=n$, then
		\bean
		\label{form der c-m n}
		\frac{\der \mu^J}{\der c_m}({\tilde c},c_m) \ =\ -a\
		\frac{y_{n-1}(x,{\tilde c},m-1)}
		{y_{n}(x,{\tilde c},c_m,m)^2} \,h_{n}
		\eean
		for some  positive integer $a$.
	\end{lem}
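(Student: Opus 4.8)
The plan is to compute $\frac{\partial\mu^J}{\partial c_m}$ directly from the explicit formula \Ref{oper2} for $\mu^J(c)$, namely $\mu^J(c) = \der + \La^{(2)} - \sum_{\ell=1}^m g_\ell(x,c)\,h_{j_\ell}$. Since only the last term $g_m(x,c)\,h_{j_m}$ depends on $c_m$ — the earlier tuples $y_{j_\ell}(x,c_1,\dots,c_\ell,\ell)$ for $\ell<m$ involve only $c_1,\dots,c_\ell$ — we get immediately
\bea
\frac{\partial\mu^J}{\partial c_m}({\tilde c},c_m) = -\frac{\partial g_m}{\partial c_m}(x,c)\,h_{j_m}.
\eea
So the whole statement reduces to evaluating $\frac{\partial g_m}{\partial c_m}$, and the claim to be proved is that this partial derivative equals $a$ times the indicated ratio of polynomials (evaluated in the earlier tuple for the numerator and the current $j_m$-th polynomial for the denominator), for a suitable positive integer $a$ depending on the normalization integer $\ep$ in Section \ref{Degree increasing generation}.

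The key computation is the following. By \Ref{g's}, $g_m(x,c) = \ln'(y_{j_m}(x,c_1,\dots,c_m,m)) - \ln'(y_{j_m}(x,c_1,\dots,c_{m-1},m-1))$, and only the first term depends on $c_m$. By the multistep generation formula \Ref{Ja}, the new $j_m$-th polynomial is $y_{j_m}(x,c_1,\dots,c_m,m) = y_{j_m,0}(x,{\tilde c}) + c_m\,y_{j_m}(x,{\tilde c},m-1)$, where $y_{j_m,0}$ is the normalized particular solution of the Wronskian equation $\Wr(y_{j_m,0}, y_{j_m}) = \ep\prod_{j\ne j_m} y_j^{-a_{j,j_m}}$ (with the $y_j$ taken in the $(m-1)$-step tuple). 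Writing $\tilde y = y_{j_m,0} + c_m y$ with $y = y_{j_m}(x,{\tilde c},m-1)$, one has
\bea
\frac{\partial}{\partial c_m}\ln'(\tilde y) = \frac{\partial}{\partial c_m}\,\frac{\tilde y'}{\tilde y}
= \frac{y'\tilde y - y\tilde y'}{\tilde y^2} = -\frac{\Wr(\tilde y, y)}{\tilde y^2} = -\frac{\Wr(y_{j_m,0}, y)}{\tilde y^2},
\eea
since $\Wr(y,y)=0$. Now $\Wr(y_{j_m,0}, y) = \ep\prod_{j\ne j_m} y_j(x,{\tilde c},m-1)^{-a_{j,j_m}}$ by the defining equation, and expanding the Cartan-matrix exponents $-a_{j,j_m}$ of type $\AT$ as in \Ref{Eqn} gives exactly $y_1^2$ when $j_m=0$, $y_{j_m-1}y_{j_m+1}$ when $1\le j_m\le n-2$, $y_{n-2}y_n^2$ when $j_m=n-1$, and $y_{n-1}$ when $j_m=n$. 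Thus $\frac{\partial\mu^J}{\partial c_m} = \ep\,\dfrac{\prod_{j\ne j_m} y_j(x,{\tilde c},m-1)^{-a_{j,j_m}}}{y_{j_m}(x,{\tilde c},c_m,m)^2}\,h_{j_m}$, which is the asserted formula with $a = \ep > 0$; I would invoke the sign/positivity convention on $\ep$ from the degree-increasing normalization to conclude $a$ is a positive integer, and note the four displayed cases \Ref{form der c-m 0}–\Ref{form der c-m n} are just the four instances of this single identity.

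The only mild subtlety — and the one place to be careful rather than a genuine obstacle — is bookkeeping of which step-index each polynomial carries: the numerator factors must be the $(m-1)$-step polynomials $y_j(x,{\tilde c},m-1)$ (which are exactly the ones entering the Wronskian equation that defines $y_{j_m,0}$), while the denominator is the freshly generated $m$-step polynomial $y_{j_m}(x,{\tilde c},c_m,m)$; keeping \Ref{Ja} and \Ref{g's} aligned handles this automatically. A secondary point is to confirm that the particular solution $y_{j_m,0}$ used in \Ref{tilde yi} is indeed $c_m$-independent (it is, being built from the $(m-1)$-step tuple alone), so that the $\partial_{c_m}$ derivative really only hits the $c_m y_{j_m}$ term. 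With these points noted, the proof is the short computation above, and I would present it as: "By \Ref{oper2}, $\frac{\partial\mu^J}{\partial c_m} = -\frac{\partial g_m}{\partial c_m}h_{j_m}$; by \Ref{Ja}, \Ref{g's}, and the Wronskian identity \Ref{Eqn}, $\frac{\partial g_m}{\partial c_m} = -\ep\,\prod_{j\ne j_m}y_j^{-a_{j,j_m}}/\tilde y_{j_m}^2$; substituting the type-$\AT$ Cartan exponents in each of the four cases yields \Ref{form der c-m 0}–\Ref{form der c-m n} with $a=\ep$."
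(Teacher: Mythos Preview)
Your argument is essentially identical to the paper's own proof: both reduce the computation to $-\partial g_m/\partial c_m\cdot h_{j_m}$ via formula \Ref{oper2}, then evaluate that derivative using the explicit form $\tilde y_{j_m}=y_{j_m,0}+c_m\,y_{j_m}$ together with the Wronskian relation \Ref{Eqn}, and finally read off the four cases from the Cartan exponents. The paper carries out the $j_m=0$ case explicitly and says the others are similar; you do all four at once via the unified expression $\prod_j y_j^{-a_{j,j_m}}$, which is a small cosmetic improvement.

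One minor slip: in your displayed chain you write $\dfrac{y'\tilde y - y\tilde y'}{\tilde y^2}=-\dfrac{\Wr(\tilde y,y)}{\tilde y^2}$, but with the paper's convention $\Wr(f,g)=fg'-f'g$ one has $y'\tilde y - y\tilde y'=\Wr(\tilde y,y)$, so the sign there is off. Tracing this through, the correct result is $\dfrac{\partial\mu^J}{\partial c_m}=-\ep\,\dfrac{\prod_{j\ne j_m}y_j^{-a_{j,j_m}}}{\tilde y_{j_m}^2}\,h_{j_m}$, matching the lemma with $a=\ep$ as you say at the end. (Neither your write-up nor the paper actually verifies that this integer is positive; in fact the leading coefficient of $\Wr(y_{j_m,0},y_{j_m})$ for monic polynomials in a degree-increasing step is $k_{j_m}-\tilde k_{j_m}<0$, so the positivity claim is dubious in both places. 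This does not affect any later use of the lemma, which only needs the formula up to a nonzero scalar.)
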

	
Notice that the right-hand side of these formulas can be written as
\bean
\label{univ}
-a
\prod_{i=0}^n y_i(x,c,m)^{-a_{i,j}} h_j.
\eean

	\begin{proof}
		Let $j_m=0$. Then $ y_{0}(x,{\tilde c},c_m,m)=y_{0,0}(x,{\tilde c}) + c_m y_{0}(x,{\tilde c},m-1),$
		where $y_{0,0}(x,{\tilde c})$ is such that
		\bea
		\Wr ( y_{0,0}(x,{\tilde c}),y_{0}(x,{\tilde c},m-1)) \ =\ a\ y_{1}(x,{\tilde c},m-1)^2,
		\eea
		for some positive integer $a$, see \Ref{tilde yi}.
		We have $ g_m = \ln'(y_{0}(x,{\tilde c},c_m,m))- \ln' (y_{0}(x,{\tilde c},m-1))$.
		
		By formula \Ref{oper2}, we have
		\bea
&&		\frac{\der \mu^J}{\der c_m}({\tilde c},c_m) = -\frac{\der g_m}{\der c_m}({\tilde c},c_m) h_0
 = - \frac \der{\der c_m}\left(
		\frac{y_{0,0}'(x,{\tilde c}) + c_m y_{0}'(x,{\tilde c},m-1)}{y_{0,0}(x,{\tilde c}) + c_m y_{0}(x,{\tilde c},m-1)}
		\right) h_0=
		\\
		&&
		\phantom{aaaa}
		= 
	-	\frac{\Wr ( y_{0,0}(x,{\tilde c}), y_{0}(x,{\tilde c},m-1))}{(y_{0,0}(x,{\tilde c}) + c_m y_{0}(x,{\tilde c},m-1))^2} h_0
		=- a\
		\frac{y_{1}(x,{\tilde c},m-1)^2}
		{y_{0}(x,{\tilde c},c_m,m)^2} h_0\,.
		\eea
		This proves formula \Ref{form der c-m 0}. The other formulas are proved similarly.
	\end{proof}
	
	\subsection{Intersection of kernels of $d\frak m_i$}

Let $J = (j_1,\dots,j_m)$ be a degree increasing sequence and
$	\mu^J  : \C^m\to \mc M(\AT)$ 
 the generation of Miura opers from $\Ll^\emptyset$
		in the $J$-th direction. We have $\mu^J(c) = \der + \La^{(1)} + \sum_{k=1}^{2n+1} v_k(x,c) e_{k,k},$
where
\bea
 \sum_{k=1}^{2n+1} v_k(x,c) =0, \quad v_i(x,c)+v_{2n+2-i}(x,c)=0,\ 
i=1,\dots, 2n+1.
\eea
Let $X(c)=\sum_{k=1}^{2n+1} X_k(x,c) e_{k,k} \in T_{\mu^J(c)} \mc M(\AT)$ be
a field of tangent vectors to $\mc M(\AT)$ at the points of the image of $\mu^J$,
\bea
 \sum_{k=1}^{2n+1} X_k(x,c) =0, \quad X_i(x,c)+X_{2n+2-i}(x,c)=0,\ 
i=1,\dots, 2n+1.
\eea
Our goal is to show that under certain conditions we have
\bean
\label{X(c)0}
	X(c) = A(c) \frac{\der\mu^J}{\der c_m}(c)
\eean
 	for some scalar function $A(c)$ on $\C^{m}$.

\begin{prop} 
 	\label{prop 6.8}
 	Let $j_m=0$ and   $X(c)\in T_{\mu^J(c)} \mc M(\AT)$. Assume that
 $	d\frak{m}_{i}\big|_{\mu^J(c)}(X(c))=0$
 	for all $i = 1,\dots, 2n$ and all $c \in \C^{m}$.
Assume that $X(c)$ has the form indicated in the right-hand side of formula 
\Ref{k2}. Then equation \Ref{X(c)0} holds.
 \end{prop}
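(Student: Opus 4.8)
The plan is to identify both the given tangent field $X(c)$ and the coordinate field $\frac{\der\mu^J}{\der c_m}(c)$ as solutions of one and the same scalar first-order linear differential equation in $x$; once that is done, their proportionality with an $x$-independent coefficient is automatic, and that coefficient is the desired function $A(c)$.

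First I would write out the coordinate field explicitly. Since $j_m=0$, Lemma \ref{lem der c-m}, formula \Ref{form der c-m 0}, gives $\frac{\der\mu^J}{\der c_m}(c)=-a\,(y_1^2/y_0^2)\,h_0$ for a positive integer $a$, where $y_0,y_1$ are the corresponding polynomials of the tuple $Y^J(c)$. In the matrix realization (Sections \ref{Realizations of gA} and \ref{sec sub}) one has $h_0=e_{1,1}-e_{2n+1,2n+1}$, so this field equals $g(x,c)\,(e_{2n+1,2n+1}-e_{1,1})$ with $g:=a\,y_1^2/y_0^2$ a nonzero rational function; that is, it has exactly the matrix form appearing on the right-hand side of \Ref{k2}, which by hypothesis $X(c)$ also has, say $X(c)=u_1(x,c)\,(e_{2n+1,2n+1}-e_{1,1})$. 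Thus at each $x$ both tangent vectors lie on the line $\C\,(e_{2n+1,2n+1}-e_{1,1})$, and the whole problem reduces to showing that the ratio $u_1/g$ does not depend on $x$.

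Next I would note that $\frac{\der\mu^J}{\der c_m}(c)$ is killed by the tangent Miura maps $d\frak m_i$, $i=1,\dots,2n$, exactly as $X(c)$ is by assumption: because $j_m=0$, Lemma \ref{lem j j'} gives $\frak m_i\circ\mu^J(\tilde c,c_m)=\frak m_i\circ\mu^{\tilde J}(\tilde c)$ for every $i\neq 0$, and differentiating in $c_m$ yields $d\frak m_i\big|_{\mu^J(c)}\big(\frac{\der\mu^J}{\der c_m}(c)\big)=0$. Hence both fields satisfy the hypothesis of Lemma \ref{lem 6.10} at $\mu^J(c)=\der+\La^{(1)}+\sum_{k=1}^{2n+1}v_k(x,c)e_{k,k}$ (that lemma uses only the vanishing of the top coefficients $Z^i_{2n-1}$, $i=1,\dots,2n$). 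For a tangent vector of the form $Y=\phi\,(e_{2n+1,2n+1}-e_{1,1})$ the components $Y_k$ with $2\le k\le 2n$ vanish, so the first relation in \Ref{eqn j0} reduces to $\phi'=2v_1\phi$, and $v_1=\ln'(y_1/y_0)$ by \Ref{v form}. Applying this with $Y=X(c)$ and with $Y=\frac{\der\mu^J}{\der c_m}(c)$ gives $u_1'=2v_1u_1$ and $g'=2v_1g$ with one and the same coefficient $v_1=v_1(x,c)$.

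Finally I would conclude: since $g\not\equiv 0$, the quotient $A(c):=u_1/g$ is a well-defined rational function of $x$ with $(u_1/g)'=(u_1'g-u_1g')/g^2=0$, so $A$ is independent of $x$; equivalently, the equation $\phi'=2\ln'(y_1/y_0)\,\phi$ forces $\phi$ to be a constant-in-$x$ multiple of $(y_1/y_0)^2$. Therefore $X(c)=A(c)\,\frac{\der\mu^J}{\der c_m}(c)$, which is \Ref{X(c)0}. I do not expect a genuine obstacle here: the substantive computation is already packaged in Lemma \ref{lem 6.10}, and the only points needing care are the bookkeeping identification $h_0=e_{1,1}-e_{2n+1,2n+1}$, so that $X(c)$ and the coordinate field literally share a common matrix factor, and the triviality $g\not\equiv 0$, which legitimizes the division. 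The companion statements for $j_m\in\{1,\dots,n-1\}$ and $j_m=n$ are to be handled by the same scheme, using Lemmas \ref{lem 6.16} and \ref{lem 6.20} in place of Lemma \ref{lem 6.10} together with the tangent-space symmetry $X_j+X_{2n+2-j}=0$ to cut the family in \Ref{k1}, \Ref{k3} down to a line.
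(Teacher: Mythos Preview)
Your proof is correct and follows essentially the same route as the paper: reduce via Lemma \ref{lem 6.10} to the scalar ODE $\phi'=2v_1\phi$ with $v_1=\ln'(y_1/y_0)$, and then match against the explicit formula for $\frac{\der\mu^J}{\der c_m}$ from Lemma \ref{lem der c-m}. The only cosmetic difference is that the paper simply integrates the ODE to get $X_1=A(c)\,y_1^2/y_0^2$ and compares, whereas you take the small detour of first showing (via Lemma \ref{lem j j'}) that $\frac{\der\mu^J}{\der c_m}$ also lies in the kernels, hence satisfies the same ODE, and then divide; both arrive at the same place.
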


	\begin{proof}
Since $X_k(x,c)= 0$ for $k=2,\dots,2n$,  equation \Ref{eqn j0} takes the form
$X'_1-2v_{1}X_{1}=0$, or more precisely,
$		X'_{1}=2 \ln'\big(		\frac{y_{1}(x,{\tilde c},m-1)}
			{y_{0}(x,{\tilde c},c_m,m)}\big) X_{1}$.
Hence $X_1(x,c) = -X_{2n+1}= A(c) \frac{y_{1}(x,{\tilde c},m-1)^2}
			{y_{0}(x,{\tilde c},c_m,m)^2}$
			for some  scalar $A(c)$. Lemma \ref{lem der c-m} 
implies equation \Ref{X(c)0}.
	\end{proof}

			\begin{prop} 
				\label{prop 6.13}
				Let $j_m\in\{1,\dots,n-1\}$
 and   $X(c)\in T_{\mu^J(c)} \mc M(\AT)$. Assume that
$ 	d\frak{m}_{i}\big|_{\mu^J(c)}(X(c))=0$
 	for all $i \notin\{ j_m,2n+1-j_m\}$ and all $c \in \C^{m}$.
 Assume that $X(c)$ has the form indicated in the right-hand side of formula 
\Ref{k1}.  Then equation \Ref{X(c)0} holds.

\end{prop}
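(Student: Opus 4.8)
The plan is to follow the proof of Proposition~\ref{prop 6.8} almost verbatim, using Lemma~\ref{lem 6.16} in place of Lemma~\ref{lem 6.10}. Write $\mu^J(c) = \der + \La^{(1)} + \sum_{k=1}^{2n+1} v_k e_{k,k}$ and $X(c) = \sum_{k=1}^{2n+1} X_k e_{k,k}$. First I would unpack the shape hypothesis~\Ref{k1}: it says $X(c) = u_1(e_{j_m+1,j_m+1}-e_{j_m,j_m}) + u_2(e_{2n+2-j_m,2n+2-j_m}-e_{2n+1-j_m,2n+1-j_m})$, and since $X(c)\in T_{\mu^J(c)}\mc M(\AT)$ obeys $X_i + X_{2n+2-i} = 0$, we must have $u_1 = u_2 =: u$. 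Thus $X_{j_m} = -u$, $X_{j_m+1} = u$, $X_{2n+1-j_m} = -u$, $X_{2n+2-j_m} = u$, and all remaining $X_k$ vanish; in particular $X_k = 0$ for every $k\in\{1,\dots,n\}\setminus\{j_m,j_m+1\}$, since $2n+1-j_m$ and $2n+2-j_m$ both lie strictly above $n$.

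Since $d\frak m_i|_{\mu^J(c)}(X(c)) = 0$ for all $i\notin\{j_m,2n+1-j_m\}$, in particular the leading coefficient $Z^i_{2n-1}$ of each such $d\frak m_i(X)$ vanishes, so Lemma~\ref{lem 6.16} applies with $j = j_m$. Its conclusion gives $X'_i = 0$ for $i\notin\{j_m,j_m+1,2n+1-j_m,2n+2-j_m\}$ (consistent with the shape just found) together with the identity $X'_{j_m} + v_{j_m}X_{j_m} + v_{j_m+1}X_{j_m+1} = -\sum_{k=1,\,k\ne j_m,j_m+1}^{n} v_k X_k$. By the first paragraph the right-hand side is $0$, so substituting $X_{j_m} = -u$, $X_{j_m+1} = u$ yields the scalar first-order ODE $u' = (v_{j_m+1}-v_{j_m})\,u$ in the variable $x$.

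Next I would identify the coefficient $v_{j_m+1}-v_{j_m}$. Using~\Ref{v form} --- and separating the sub-cases $j_m\le n-2$ and $j_m = n-1$, since $v_n = \ln'(y_n^2/y_{n-1})$ has a different shape from the other $v_i$ --- one checks that in both cases $v_{j_m+1}-v_{j_m} = \langle\alpha_{j_m},V\rangle = \ln'\big(\prod_{i=0}^{n} y_i^{-a_{i,j_m}}\big)$, cf.~\Ref{Def eq}, \Ref{def eq}. Hence $u = A(c)\prod_{i=0}^{n} y_i(x,c,m)^{-a_{i,j_m}}$ for some function $A(c)$ independent of $x$. Finally, as a diagonal matrix $X(c) = u\,h_{j_m}$ --- this is precisely the combination of the four matrix units $e_{k,k}$ occurring in the realization of $h_{j_m}$ --- while Lemma~\ref{lem der c-m} in the form~\Ref{univ} gives $\frac{\der\mu^J}{\der c_m}(c) = -a\prod_{i=0}^{n} y_i(x,c,m)^{-a_{i,j_m}}\,h_{j_m}$ with $a$ a positive integer. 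Comparing the two, $X(c) = -\tfrac{A(c)}{a}\,\frac{\der\mu^J}{\der c_m}(c)$, which is~\Ref{X(c)0}.

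Given Lemmas~\ref{lem 6.16} and~\ref{lem der c-m}, the computation is routine; the main care points are bookkeeping ones --- verifying that the ``mirror'' indices $2n+1-j_m$, $2n+2-j_m$ exceed $n$ so that the right-hand side of the Lemma~\ref{lem 6.16} identity collapses to zero, and handling the endpoint $j_m = n-1$ where $v_{j_m+1} = v_n$, for which the computation $v_n - v_{n-1} = \ln'(y_{n-2}y_n^2/y_{n-1}^2) = \langle\alpha_{n-1},V\rangle$ again closes the argument uniformly.
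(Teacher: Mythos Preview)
Your proof is correct and follows essentially the same route as the paper's: apply Lemma~\ref{lem 6.16} to reduce to the ODE $X'_{j_m}+(v_{j_m}-v_{j_m+1})X_{j_m}=0$, solve it using the formulas~\Ref{v form}/\Ref{def eq}, and match against Lemma~\ref{lem der c-m}. Your write-up is slightly more explicit than the paper's --- you spell out the use of the tangent-space symmetry $X_i+X_{2n+2-i}=0$ to force $u_1=u_2$, and you invoke the unified form~\Ref{univ} rather than separating $j_m\le n-2$ from $j_m=n-1$ --- but the substance is identical.
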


\begin{proof}		
By Lemma \ref{lem 6.16} we have
$	X'_{j_m}+(v_{j_m}-v_{j_m+1})X_{j_m}	=0$. Then for $j_m=1,\dots,n-2$, we have
	\bea
	X_{j_m}=-X_{j_m+1} = X_{2n+1-j_m}=-X_{2n+2-j_m} 
	=A(c)\
	\frac{y_{j_{m}-1}(x,{\tilde c},m-1)y_{j_{m}+1}(x,{\tilde c},m-1)}
	{y_{j_{m}}(x,{\tilde c},c_m,m)^2} 
	\eea
 and for $j_m=n-1$, we have
	\bea
	X_{n-1}=-X_n=X_{n+2}=-X_{n+3} 
	= A(c)\
	\frac{y_{n-2}(x,{\tilde c},m-1)y_{n}^{2}(x,{\tilde c},m-1)}
	{y_{n-1}(x,{\tilde c},c_m,m)^2} .
	\eea
Lemma \ref{lem der c-m} yields equation \Ref{X(c)0}.
\end{proof}

	\begin{prop} 
		\label{prop n}
		Let $j_m=n$ and   $X(c)\in T_{\mu^J(c)} \mc M(\AT)$. 
		 Assume that
$		d\frak{m}_{i}\big|_{\mu^J(c)}(X(c))=0$ 
		for all $i \neq n,n+1$, and $c \in \C^{m}$.
Assume that $X(c)$ has the form indicated in the right-hand side of formula 
\Ref{k3}. 		Then equation \Ref{X(c)0} holds.
	\end{prop}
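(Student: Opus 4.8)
The plan is to follow the template of the proofs of Propositions \ref{prop 6.8} and \ref{prop 6.13}, with Lemma \ref{lem 6.20} now playing the role that Lemmas \ref{lem 6.10} and \ref{lem 6.16} played there.

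First I would use the prescribed shape of $X(c)$ together with the tangent-space constraints to reduce it to a single unknown scalar. Write $X(c)=\sum_{k=1}^{2n+1}X_k(x,c)\,e_{k,k}$. By \Ref{k3} we have $X_k=0$ for $k\notin\{n,n+1,n+2\}$, and since $X(c)\in T_{\mu^J(c)}\mc M(\AT)$ we also have $X_k+X_{2n+2-k}=0$ for all $k$. Because $2n+2-(n+1)=n+1$ and $2n+2-n=n+2$, this forces $X_{n+1}=0$ and $X_{n+2}=-X_n$, so $X(c)=X_n(x,c)\,(e_{n,n}-e_{n+2,n+2})$ and it only remains to determine $X_n$.

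Next I would feed the vanishing hypothesis into Lemma \ref{lem 6.20}. Writing $\mu^J(c)=\der+\La^{(1)}+\sum_{k=1}^{2n+1}v_k(x,c)\,e_{k,k}$ and $d\frak m_i\big|_{\mu^J(c)}(X(c))=Z^i=\sum_j Z^i_j\der^j$, the assumption $Z^i=0$ for all $i\neq n,n+1$ gives in particular $Z^i_{2n-1}=0$ for all $i\notin\{n,n+1\}$, so Lemma \ref{lem 6.20} applies and yields $X'_n+v_nX_n=-\sum_{k=1}^{n-1}v_kX_k$. Since $X_k=0$ for $k=1,\dots,n-1$ the right-hand side vanishes, so $X'_n=-v_nX_n$. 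By \Ref{v form}, along the image of $\mu^J$ one has $v_n=\ln'\!\big(y_n(x,\tilde c,c_m,m)^2/y_{n-1}(x,\tilde c,m-1)\big)$, hence $\ln'(X_n)=\ln'\!\big(y_{n-1}(x,\tilde c,m-1)/y_n(x,\tilde c,c_m,m)^2\big)$, and therefore $X_n(x,c)=A(c)\,y_{n-1}(x,\tilde c,m-1)/y_n(x,\tilde c,c_m,m)^2$ for some scalar function $A(c)$ on $\C^m$.

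Finally I would compare with Lemma \ref{lem der c-m}: formula \Ref{form der c-m n} gives $\frac{\der\mu^J}{\der c_m}(\tilde c,c_m)=-a\,\frac{y_{n-1}(x,\tilde c,m-1)}{y_n(x,\tilde c,c_m,m)^2}\,h_n$ for some positive integer $a$, and under the realization of Section \ref{sec sub} one has $h_n=2(-e_{n,n}+e_{n+2,n+2})$, so $\frac{\der\mu^J}{\der c_m}(\tilde c,c_m)=2a\,\frac{y_{n-1}(x,\tilde c,m-1)}{y_n(x,\tilde c,c_m,m)^2}\,(e_{n,n}-e_{n+2,n+2})$; combining this with the value of $X_n$ found above gives $X(c)=\frac{A(c)}{2a}\,\frac{\der\mu^J}{\der c_m}(c)$, which is \Ref{X(c)0}. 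I expect the only step needing care to be the first one: Lemma \ref{lem 6.20} controls the derivatives $X'_i$ for $i\notin\{n,n+2\}$ rather than the $X_i$ themselves, so one genuinely needs the hypothesis on the form of $X(c)$ (which in the application will come from Proposition \ref{prop77}(iii)) in order to pin $X(c)$ down to a multiple of $e_{n,n}-e_{n+2,n+2}$; after that the computation is the same bookkeeping as in the cases $j_m=0$ and $j_m\in\{1,\dots,n-1\}$.
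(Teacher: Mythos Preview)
Your proof is correct and follows essentially the same route as the paper's: reduce $X(c)$ to $X_n(e_{n,n}-e_{n+2,n+2})$ using the shape \Ref{k3} and the tangent-space symmetry, apply Lemma \ref{lem 6.20} to obtain $X_n'+v_nX_n=0$, solve with $v_n=\ln'(y_n^2/y_{n-1})$, and compare with Lemma \ref{lem der c-m}. You supply a bit more detail (the explicit constant $A(c)/(2a)$ and the remark on why the shape hypothesis is genuinely needed), but the argument is the same.
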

	
	\begin{proof}
By assumptions we have  $X_{n+1}=0$ as well as $X_{i}=0$ for $i\ne n, n+1, n+2$.
By Lemma \ref{lem 6.20} we have $X_n\rq{}+v_nX_n=0$ where 
$v_n=\ln'\frac{y_n^2}{y_{n-1}}$.  Hence
$X_n=-X_{n+2}=A(c)\frac{y_{n-1}(x,{\tilde c},m-1)}
{y_{n}(x,{\tilde c},c_m,m)^2}$ for some scalar function $A(c)$.
Lemma \ref{lem der c-m} yields equation \Ref{X(c)0}.
\end{proof}

	\section{Vector fields}
	\label{sec Vector fields}

	\subsection{Statement}
	\label{sec statement}
	
	Let $r>0$ be odd, and $r\neq 2n+1$ mod $4n+2$. Recall that  we denote by $ \frac \der{\der t_r}$ the $r$-th mKdV vector field
	on the space  $\mc M(\AT)$  of Miura opers of type $\AT$. We also denote by $ \frac \der{\der t_r}$ the $r$-th mKdV vector field of type $\At$
	on the space  $\mc M(\At)$  of Miura opers of type $\At$.
	We have a natural embedding $\mc M(\AT) \hookrightarrow \mc M(\At)$.
	Under this embedding the vector $ \frac \der{\der t_r}$  on $\mc M(\AT)$ equals the vector filed $ \frac \der{\der t_r}$ on
	$\mc M(\At)$ restricted to  $\mc M(\AT)$,  see Section \ref{sec comp}.
	We also denote by  $ \frac \der{\der t_r}$ the $r$-th KdV vector field
	on the space  $\D$, see  Section \ref{sec KdV}.
	
	For a Miura map $\frak m_i : \mc M\to \D,\ \Ll \mapsto L_i,$ denote by $d\frak m_i$  the associated
	derivative map $T\mc M(\At) \to T\D$ of tangent spaces.  By Theorem
	\ref{thm mkdvtokdv} we have $d\frak m_i:  \frac \der{\der t_r}\big|_{\Ll}
	\mapsto \frac \der{\der t_r}\big|_{L_i}$.

	Fix a degree increasing sequence $J=(j_1,\dots, j_m)$. Consider the associated family $\mu^J:\C^m\to \mc M(\AT)$
	of Miura opers.
	For a vector field $\Ga $ on $\C^m$, we denote by  $\frak{L}_\Ga\mu^J$
	 the derivative of $\mu^J$ along the vector field.
	The derivative is well-defined since $\mc M(\AT)$ is an affine space.

	\begin{thm}
		\label{thm main}
		Let $r>0$ be odd, $r\neq 2n+1$ mod $4n+2$. Then there exists a polynomial vector field $\Ga_r$ on $\C^m$
		such that
		\bean
		\label{formula main}
		\frac \der{\der t_r}\Big|_{\mu^J(c)} =\frak{L}_{\Ga_r}\mu^J(c) 
		\eean
		for all $c\in\C^m$. If $r>4m$, then
$		\frac \der{\der t_r}\big|_{\mu^J(c)} =0$.
	\end{thm}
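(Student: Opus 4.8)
The plan is to establish the two assertions of Theorem~\ref{thm main} separately; the existence of $\Ga_r$ will be proved by induction on the length $m$ of $J$. Consider first the vanishing for $r>4m$. Corollary~\ref{cor der} gives $\frac\der{\der t_r}\big|_{\mu^J(c)}=-\frac\der{\der x}\big(e^{-\,\ad U^J(c)}(\La^{(2)})^r\big)^0$, and by \Ref{T} we have $e^{-\,\ad U^J(c)}=e^{\ad g_m f_{j_m}}\cdots e^{\ad g_1 f_{j_1}}$. In the matrix realization of Section~\ref{sec sub}, conjugation by $e^{\ad g f_j}$ is $X\mapsto e^{gf_j}Xe^{-gf_j}$; by Lemma~\ref{lem exp} the matrix $e^{gf_j}$ is a polynomial in $(\La^{(1)})^{-1}$ of degree $\le1$ when $j\ne n$ and of degree $\le2$ when $j=n$, so conjugation by $e^{\ad g f_j}$ lowers the standard $\AT$-grading by at most $2$ (resp.\ $4$), and $e^{-\,\ad U^J(c)}$ lowers it by at most $4m$. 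Since $(\La^{(2)})^r=\La^{(2)}_r$ is homogeneous of degree $r$, the element $e^{-\,\ad U^J(c)}(\La^{(2)})^r$ has all components in degrees $\ge r-4m$; if $r>4m$ its degree-$0$ component is zero, so $\frac\der{\der t_r}\big|_{\mu^J(c)}=0$. The same count with $m=0$ covers $\Ll^{\8}=\der+\La$.

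For the existence of $\Ga_r$ we induct on $m$; the case $m=0$ is settled above with $\Ga_r=0$. Assume the statement for $\tilde J=(j_1,\dots,j_{m-1})$, with polynomial vector field $\tilde\Ga_r$ on $\C^{m-1}$, and let $\hat\Ga_r$ be $\tilde\Ga_r$ regarded as a vector field on $\C^m$ with no $\der/\der c_m$-component. Since $\mc M(\AT)$ is an affine space modelled on $\B(\gAA^0)$, we may set $X(c):=\frac\der{\der t_r}\big|_{\mu^J(c)}-\frak{L}_{\hat\Ga_r}\mu^J(c)\in\B(\gAA^0)$ and try to show it satisfies the hypotheses of Proposition~\ref{prop 6.8}, \ref{prop 6.13}, or~\ref{prop n} (according as $j_m=0$, $j_m\in\{1,\dots,n-1\}$, or $j_m=n$). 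First, $X(c)$ has the form of the right-hand side of \Ref{k2}, \Ref{k1}, or \Ref{k3}: by Lemma~\ref{lem formula}, and because $g_1,\dots,g_{m-1}$ depend only on $\tilde c=(c_1,\dots,c_{m-1})$, one has $\mu^J(c)=\mu^{\tilde J}(\tilde c)-g_m(x,c)\,h_{j_m}$, hence $\frac{\der\mu^J}{\der c_\ell}(c)-\frac{\der\mu^{\tilde J}}{\der c_\ell}(\tilde c)=-\frac{\der g_m}{\der c_\ell}(c)\,h_{j_m}$ for $\ell<m$; in the decomposition $X(c)=\big(\frac\der{\der t_r}\big|_{\mu^J(c)}-\frac\der{\der t_r}\big|_{\mu^{\tilde J}(\tilde c)}\big)+\big(\frac\der{\der t_r}\big|_{\mu^{\tilde J}(\tilde c)}-\frak{L}_{\hat\Ga_r}\mu^J(c)\big)$ the first summand has the required form by Proposition~\ref{prop77}, and the second equals $\big(\sum_{\ell<m}\tilde a_\ell(\tilde c)\,\frac{\der g_m}{\der c_\ell}(c)\big)h_{j_m}$, a scalar multiple of $h_{j_m}$, which is of that form by the matrix expression for $h_{j_m}$.

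Second, $d\frak m_i\big|_{\mu^J(c)}(X(c))=0$ for every $i\notin\{j_m,2n+1-j_m\}$ (for every $i\ne0$ when $j_m=0$). Indeed $d\frak m_i$ carries the $r$-th mKdV field to the $r$-th KdV field (Theorem~\ref{thm mkdvtokdv}), so $d\frak m_i\big|_{\mu^J(c)}\big(\frac\der{\der t_r}\big|_{\mu^J(c)}\big)$ is the $r$-th KdV field at $\frak m_i(\mu^J(c))$; on the other hand $d\frak m_i\big|_{\mu^J(c)}\big(\frak{L}_{\hat\Ga_r}\mu^J(c)\big)=\frak{L}_{\hat\Ga_r}(\frak m_i\circ\mu^J)(c)$, and by Lemma~\ref{lem j j'} this equals $\frak{L}_{\tilde\Ga_r}(\frak m_i\circ\mu^{\tilde J})(\tilde c)=d\frak m_i\big|_{\mu^{\tilde J}(\tilde c)}\big(\frac\der{\der t_r}\big|_{\mu^{\tilde J}(\tilde c)}\big)$ by the induction hypothesis, i.e.\ the $r$-th KdV field at $\frak m_i(\mu^{\tilde J}(\tilde c))=\frak m_i(\mu^J(c))$; the two coincide, so the difference vanishes. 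Propositions~\ref{prop 6.8},~\ref{prop 6.13},~\ref{prop n} now yield $X(c)=A(c)\,\frac{\der\mu^J}{\der c_m}(c)$ for a scalar function $A$ on $\C^m$, and hence $\frac\der{\der t_r}\big|_{\mu^J(c)}=\frak{L}_{\Ga_r}\mu^J(c)$ with $\Ga_r:=\hat\Ga_r+A\,\frac\der{\der c_m}$.

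It remains to verify that $A$ is a polynomial, which closes the induction. By Lemma~\ref{lem der c-m} the $h_{j_m}$-coefficient of $\frac{\der\mu^J}{\der c_m}(c)$ is $-a\,\prod_{i\ne j_m}y_i(x,c,m)^{-a_{i,j_m}}/y_{j_m}(x,c,m)^2$, a nonzero rational function of $(x,c)$ whose poles in $x$ lie only at roots of the monic polynomial $y_{j_m}(x,c,m)$, whose coefficients are polynomial in $c$; then $A(c)$ is the quotient of the $h_{j_m}$-coefficient of $X(c)$ by this expression, and analysing its poles in $c$ --- tracking, via \Ref{T mkdv} and Lemma~\ref{lem exp}, the orders of the $x$-poles of $X(c)$ along the roots of $y_{j_m}(x,c,m)$ and the behaviour as $x\to\infty$ --- shows that $A$ has no poles, so it is a polynomial. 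I expect this degree/pole estimate to be the main obstacle; the remaining steps are bookkeeping with the structural results already in place. Finally, when $r>4m$ the vanishing part gives $\frac\der{\der t_r}\big|_{\mu^J(c)}=0$ and, as $r>4(m-1)$, also $\frac\der{\der t_r}\big|_{\mu^{\tilde J}(\tilde c)}=0$, whence $\hat\Ga_r=0$, $X(c)=0$, $A\equiv0$ and $\Ga_r=0$, in agreement with the vanishing assertion.
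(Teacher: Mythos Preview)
Your proposal is correct and follows essentially the same route as the paper: the vanishing for $r>4m$ via Corollary~\ref{cor der} and the degree bound from Lemma~\ref{lem exp}, and the existence of $\Ga_r$ by induction on $m$, using Lemma~\ref{lem j j'} and Theorem~\ref{thm mkdvtokdv} to show the difference $X(c)$ lies in the right kernels, Proposition~\ref{prop77} together with the matrix form of $h_{j_m}$ to show it has the correct shape, and then Propositions~\ref{prop 6.8},~\ref{prop 6.13},~\ref{prop n} to identify it with a multiple of $\frac{\der\mu^J}{\der c_m}$. The only differences are cosmetic: the paper starts the induction at $m=1$ with an explicit computation rather than at $m=0$, and for the polynomiality of the last coefficient it simply refers to \cite[Proposition~5.9]{VW} rather than sketching the pole-counting argument you outline; your honest flag that this step is the main remaining obstacle is accurate, and the paper does not supply more detail there either.
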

	
	\begin{cor}
		\label{cor Main}
		The family $\mu^J$ of Miura opers is invariant with respect to all mKdV flows of type $\AT$ and
		is point-wise fixed by flows with $r>4m$.
		
	\end{cor}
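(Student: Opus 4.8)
We argue by induction on the length $m$ of $J$. For $m=0$, the map $\mu^\8$ is constant with value $\Ll^\8=\der+\La^{(2)}$, so $U=0$ is admissible in Proposition \ref{Prop U}; hence $\phi(\La^{(2)}_r)=\La^{(2)}_r\in\gAA^r$, $\phi(\La^{(2)}_r)^0=0$, and $\frac\der{\der t_r}\big|_{\Ll^\8}=0$ by Lemma \ref{lem der2}. One takes $\Ga_r=0$, and $r>0=4m$.

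For the inductive step write $\tilde J=(j_1,\dots,j_{m-1})$, $c=(\tilde c,c_m)\in\C^{m-1}\times\C$; let $\tilde\Ga_r$ be the polynomial vector field on $\C^{m-1}$ given by the induction hypothesis, and let $\hat\Ga_r$ be $\tilde\Ga_r$ regarded as a vector field on $\C^m=\C^{m-1}\times\C$ with zero $\der/\der c_m$-component. I would study
\bea
\Xi(c)\ :=\ \frac\der{\der t_r}\Big|_{\mu^J(c)}\ -\ \frak L_{\hat\Ga_r}\mu^J(c)\ \in\ T_{\mu^J(c)}\mc M(\AT)
\eea
and establish two properties. (a) $\Xi(c)\in\ker d\frak m_i\big|_{\mu^J(c)}$ for every $i$ outside the exceptional set of Lemma \ref{lem j j'} (namely $i\ne0$ when $j_m=0$, and $i\ne j_m,2n+1-j_m$ otherwise). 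Indeed, for such $i$ the composite $\frak m_i\circ\mu^J$ factors as $(\frak m_i\circ\mu^{\tilde J})\circ\pi$ with $\pi(\tilde c,c_m)=\tilde c$, so by the chain rule and the induction hypothesis $d\frak m_i\big|_{\mu^J(c)}(\frak L_{\hat\Ga_r}\mu^J(c))=d\frak m_i\big|_{\mu^{\tilde J}(\tilde c)}\big(\frac\der{\der t_r}\big|_{\mu^{\tilde J}(\tilde c)}\big)$, which by Theorem \ref{thm mkdvtokdv} and Section \ref{sec comp} equals the KdV field $\frac\der{\der t_r}\big|_{\frak m_i(\mu^{\tilde J}(\tilde c))}$; since $\frak m_i(\mu^{\tilde J}(\tilde c))=\frak m_i(\mu^J(c))$ by Lemma \ref{lem j j'}, this in turn equals $\frac\der{\der t_r}\big|_{\frak m_i(\mu^J(c))}=d\frak m_i\big|_{\mu^J(c)}\big(\frac\der{\der t_r}\big|_{\mu^J(c)}\big)$ by Theorem \ref{thm mkdvtokdv} again, so $d\frak m_i\big|_{\mu^J(c)}(\Xi(c))=0$. (b) $\Xi(c)$ has the diagonal shape on the right-hand side of \Ref{k1}, \Ref{k2} or \Ref{k3} according as $j_m\in\{1,\dots,n-1\}$, $j_m=0$ or $j_m=n$: the difference $\frac\der{\der t_r}\big|_{\mu^J(c)}-\frac\der{\der t_r}\big|_{\mu^{\tilde J}(\tilde c)}$ has this shape by Proposition \ref{prop77}, while $\frac\der{\der t_r}\big|_{\mu^{\tilde J}(\tilde c)}-\frak L_{\hat\Ga_r}\mu^J(c)$ is a scalar multiple of $h_{j_m}$ --- this follows from \Ref{oper2} and the induction hypothesis, as the functions $g_\ell$ with $\ell<m$ are the same for $J$ and $\tilde J$ --- and the matrix realization of $h_{j_m}$ is of exactly that shape. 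Granting (a) and (b), Proposition \ref{prop 6.8}, \ref{prop 6.13} or \ref{prop n} yields $\Xi(c)=A(c)\,\frac{\der\mu^J}{\der c_m}(c)$ for a scalar function $A$, first on the dense open locus where $Y^J(c)$ is generic and then for all $c$ by continuity. Thus \Ref{formula main} holds with $\Ga_r:=\hat\Ga_r+A\,\frac\der{\der c_m}$.

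It remains to check that $\Ga_r$ is polynomial and that the flow vanishes for $r>4m$. Since $\tilde\Ga_r$ is polynomial by induction, polynomiality of $\Ga_r$ reduces to regularity of $A$ on $\C^m$: given $c^0$, choose $x_0$ with $y_i(x_0,c^0)\ne0$ for every component $y_i$ of $Y^J(c^0)$ (possible since the $y_i$ are monic); then near $c^0$ the quantities $\mu^J(c)$, $\frak L_{\hat\Ga_r}\mu^J(c)$ and $\frac\der{\der t_r}\big|_{\mu^J(c)}$ (the last via Corollary \ref{cor der}) are regular in $c$ when evaluated at $x=x_0$, while $\frac{\der\mu^J}{\der c_m}(c)=-a\prod_i y_i(x,c)^{-a_{i,j_m}}h_{j_m}$ (Lemma \ref{lem der c-m}, \Ref{univ}) is nonzero there, so reading off one diagonal entry exhibits $A$ as regular near $c^0$. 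For $r>4m$: by Corollary \ref{cor der} and \Ref{T}, $\frac\der{\der t_r}\big|_{\mu^J(c)}=-\frac d{dx}\big(P\La^r P^{-1}\big)^0$, where $P=e^{g_m f_{j_m}}\cdots e^{g_1 f_{j_1}}$ in the matrix realization; by Lemma \ref{lem exp} each factor $e^{g_\ell f_{j_\ell}}$ involves only standard degrees $0,-1,\dots,-d_\ell$ with $d_\ell=2$ if $j_\ell=n$ and $d_\ell=1$ otherwise, so $P$ and $P^{-1}$ involve only degrees $-D,\dots,0$ with $D=\sum_\ell d_\ell\le 2m$, and hence $P\La^r P^{-1}$ only degrees $r-2D,\dots,r$; when $r>2D$, in particular when $r>4m$, its degree-$0$ component --- and therefore $\frac\der{\der t_r}\big|_{\mu^J(c)}$ --- vanishes, so that \Ref{formula main} holds trivially with $\Ga_r=0$.

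The step I expect to be the main obstacle is establishing properties (a) and (b) of $\Xi(c)$ at once: this forces a careful dovetailing of the mKdV-to-KdV intertwining (Theorem \ref{thm mkdvtokdv}), the factorization of Lemma \ref{lem j j'}, Proposition \ref{prop77} and the explicit matrix realization of $h_{j_m}$. After that the kernel Propositions of Section \ref{sec cr and Miu} carry the essential content, and the polynomiality of $A$ together with the degree count for $r>4m$ are comparatively routine.
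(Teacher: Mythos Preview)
Your proposal is correct and follows essentially the same inductive scheme as the paper's proof of Theorem \ref{thm main} (from which the Corollary is immediate): the definition of $\Xi(c)$, property (a) via Lemma \ref{lem j j'} and Theorem \ref{thm mkdvtokdv}, property (b) via Proposition \ref{prop77} and formula \Ref{oper2}, and the invocation of Propositions \ref{prop 6.8}--\ref{prop n} are exactly Lemmas \ref{lem ker 1}, \ref{lem differ} and the end of Section \ref{sec proof prop ind}. Your base case $m=0$ is slightly cleaner than the paper's $m=1$, and your explicit arguments for polynomiality of $A$ and for the vanishing when $r>4m$ spell out what the paper handles by a reference to \cite{VW} and by the one-line appeal to Corollary \ref{cor der} with Lemma \ref{lem exp}.
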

	
In other words, every mKdV flow corresponds to a flow on the space
of integration parameters $c\in\C^m$. 
Informally speaking, we may say, that the integration parameters
$c = (c_1,\dots,c_m)$ are times of the mKdV flows.

	\subsection{Proof of Theorem \ref{thm main} for $m=1$}
Let  $J=(j_1)$. Then 
$\mu^{J}(c_1) = e^{g_1f_{j_1}}\Ll^\emptyset e^{-g_1 f_{j_1}}=
\der +\La -g_1h_{j_1}$,
where $g_1= \frac 1{x+c_1}$, see formula \Ref{T}.
We have
\bean
\label{m=1r}
\frac{\der}{\der t_r}\Big|_{\mu^J(c_1)} =-\frac{\der}{\der x}
\Big[e^{g_1f_{j_1}} \La^r e^{-g_1 f_{j_1}}\Big]^0.
\eean
Assume  $j_1\in\{1,\dots,n-1\}$. Then
$e^{g_1f_{j_1}} = 1+g_1(e_{j_1,j_1}+e_{2n+1-j_1,2n+1-j_1})\Lambda^{-1}$.
If $r$ is odd and $r>1$, then the right-hand side of \Ref{m=1r} is zero, hence
 $\frac{\der}{\der t_r}|_{\mu^J(c_1)}=\Ga_r=0$. Let $r=1$, then
\bea
\frac{\der}{\der t_1}\Big|_{\mu^J(c_1)} =
-\frac{\der}{\der x}
\Big[e^{g_1f_{j_1}} \La e^{-g_1 f_{j_1}}\Big]^0 =\frac{\der}{\der x}g_1h_{j_1}= -\frac 1{(x+c_1)^2}h_{j_1} = -\frac {\der\mu^J}{\der c_1} (c_1).  
\eea
Hence $\Ga_1=-\frac{\der}{\der c_1}$.
Assume $j_1=n$, then
	by formula \Ref{T mkdv},
	\bean
\label{La r>1}
		\frac \der{\der t_r}\Big|_{\mu^J(c_1)}
	&=&
 - \frac {\der}{\der x}\Big[(1 + 2g_1(e_{n,n}+e_{n+1,n+1})\La^{-1} + 4g_1^2 e_{n,n} \La^{-2})
	\\
\notag
	&\times&
 \Lambda^r (1 - 2g_1(e_{n,n}+e_{n+1,n+1})\La^{-1} + 4g_1^2 e_{n,n} \La^{-2}\Big]^0.
	\eean
	It follows from \Ref{La r>1} and 
Lemma \ref{lem lambda} that $\frac \der{\der t_r}\big|_{\mu^J(c_1)}=0$ for $r$ odd and $r>1$, hence
	$\Ga_r=0$. For $r=1$ we have 
$\frac \der{\der t_r}\big|_{\mu^J(c_1)}= -2\frac{ dg_1}{dx} (e_{n,n}-e_{n+2,n+2}) = \frac {dg_1}{dx} h_n=
-\frac 1{(x+c_1)^2}h_n=- \frac {\der\mu^{J}}{\der c_1} (c_1)$. Hence $\Ga_1=-\frac\der{\der c_1}$.
Similarly, if $j_1=0$, then $\Ga_1=-\frac\der{\der c_1}$ and $\Ga_r=0$ for $r>1$.
	Theorem \ref{thm main} is proved for $m=1$.
	
	\subsection{Beginning of proof of Theorem \ref{thm main} for $m>1$}
		
	We prove the first statement of Theorem \ref{thm main} by induction on $m$. Let $J=(j_1,\dots,j_m)$.
	Assume that the statement is proved for ${\tilde J}=(j_1,\dots,j_{m-1})$.
	Let
	\bea
	Y^{{\tilde J}}\ : \ \C^{m-1} \to  (\C[x])^{n+1}, \quad
	{\tilde c}=(c_1,\dots,c_{m-1}) \ \mapsto \ (y_0(x,{\tilde c}),\dots, y_n(x,{\tilde c}))
	\eea
	be the generation of tuples in the ${\tilde J}$-th direction. Then the generation
	of tuples in the $J$-th direction is
	\bea
	Y^{J}\ :\ \C^m \mapsto (\C[x])^{n+1}, \quad
	({\tilde c},c_m) \mapsto
	\ (y_0(x,\tilde c),\dots,  y_{j_m,0}(x,{\tilde c}) + c_m y_{j_m}(x,{\tilde c}),\dots, y_n(x,\tilde c),
	\eea
	see \Ref{J'} and \Ref{Ja}.
	We have $g_m = \ln'(y_{j_m,0}(x,{\tilde c}) + c_m y_{j_m}(x,{\tilde c}))- \ln' (y_{j_m}(x,{\tilde c}))$,
	see \Ref{g's}.

	By the induction assumption,
	there exists a polynomial vector field $\Gamma_{r,{\tilde J}}=\sum_{i=1}^{m-1}\ga_i(\tilde c)\frac\der{\der c_i}$ on $\C^{m-1}$ such that 	for all ${\tilde c}\in\C^{m-1}$ we have
	\bean
	\label{fromula main m-1}
	\frac \der{\der t_r}\Big|_{\mu^{{\tilde J}}({\tilde c})} = 
\frak{L}_{\Ga_{r,{\tilde J}}}\mu^{{\tilde J}}(\tilde c).
	\eean

	\begin{prop}
		\label{prop ind}
		There exists a scalar polynomial
		$\ga_{m}(\tilde c,c_m)$ on $\C^m$  such that the vector field
		$\Ga_r = \Gamma_{r,{\tilde J}} + \ga_{m}({\tilde c},c_m)\frac \der{\der c_m}$
		satisfies \Ref{formula main} for all $(\tilde c,c_m)\in\C^m$.
	\end{prop}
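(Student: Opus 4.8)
The plan is to set
\[
X(c)\ :=\ \frac{\der}{\der t_r}\Big|_{\mu^J(c)}\ -\ \frak L_{\Gamma_{r,\tilde J}}\mu^J(c),
\]
where $\Gamma_{r,\tilde J}=\sum_{i=1}^{m-1}\ga_i(\tilde c)\frac{\der}{\der c_i}$ is viewed as a vector field on $\C^m$ with no $\frac{\der}{\der c_m}$-component, and to prove that $X(c)=A(c)\,\frac{\der\mu^J}{\der c_m}(c)$ for a \emph{polynomial} function $A$ on $\C^m$. Then $\ga_m:=A$ and $\Ga_r:=\Gamma_{r,\tilde J}+\ga_m\frac{\der}{\der c_m}$ satisfy \Ref{formula main}, and $\Ga_r$ is polynomial since $\Gamma_{r,\tilde J}$ is by the induction hypothesis. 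Both summands defining $X(c)$ lie in $\B(\gAA^0)=T_{\mu^J(c)}\mc M(\AT)$, see \Ref{TM}: the mKdV field is tangent to $\mc M(\AT)$ and $\mu^J$ takes values there; so $X(c)=\sum_{k=1}^{2n+1}X_k(x,c)e_{k,k}$ with $\sum_kX_k=0$ and $X_k+X_{2n+2-k}=0$.

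First I would determine the block shape of $X(c)$. Since, by \Ref{g's}, the functions $g_\ell$ with $\ell\le m-1$ do not depend on $c_m$, formula \Ref{oper2} gives
\[
\frak L_{\Gamma_{r,\tilde J}}\mu^J(c)\ -\ \frak L_{\Gamma_{r,\tilde J}}\mu^{\tilde J}(\tilde c)\ =\ -\,(\Gamma_{r,\tilde J}g_m)(x,c)\,h_{j_m},
\]
and $h_{j_m}$, written as a diagonal matrix via Section \ref{sec sub} and the identities $H_0=e_{1,1}-e_{2n+1,2n+1}$, $H_i=-e_{i,i}+e_{i+1,i+1}$, has exactly the form of the right-hand side of \Ref{k1} if $j_m\in\{1,\dots,n-1\}$, of \Ref{k2} if $j_m=0$, and of \Ref{k3} if $j_m=n$. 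Using the induction hypothesis \Ref{fromula main m-1} to rewrite $\frac{\der}{\der t_r}|_{\mu^{\tilde J}(\tilde c)}=\frak L_{\Gamma_{r,\tilde J}}\mu^{\tilde J}(\tilde c)$, I obtain
\[
X(c)\ =\ \Big(\frac{\der}{\der t_r}\Big|_{\mu^J(c)}-\frac{\der}{\der t_r}\Big|_{\mu^{\tilde J}(\tilde c)}\Big)\ +\ (\Gamma_{r,\tilde J}g_m)(x,c)\,h_{j_m},
\]
and by Proposition \ref{prop77} the first term has the same block form; hence so does $X(c)$.

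Next I would show that $d\frak m_i\big|_{\mu^J(c)}(X(c))=0$ for every $i$ outside the exceptional set attached to $j_m$: namely $i\in\{1,\dots,2n\}$ when $j_m=0$, $i\notin\{j_m,2n+1-j_m\}$ when $j_m\in\{1,\dots,n-1\}$, and $i\notin\{n,n+1\}$ when $j_m=n$. For such $i$, Lemma \ref{lem j j'} gives $\frak m_i\circ\mu^J(\tilde c,c_m)=\frak m_i\circ\mu^{\tilde J}(\tilde c)=:L_i$, independent of $c_m$. Applying $d\frak m_i$ and using that it carries the mKdV field to the KdV field (Theorem \ref{thm mkdvtokdv}, cf. Section \ref{sec statement}), the chain rule, and the induction hypothesis, one finds that $d\frak m_i\big(\frac{\der}{\der t_r}|_{\mu^J(c)}\big)$ and $d\frak m_i\big(\frak L_{\Gamma_{r,\tilde J}}\mu^J(c)\big)=\frak L_{\Gamma_{r,\tilde J}}(\frak m_i\circ\mu^{\tilde J})(\tilde c)$ both equal $\frac{\der}{\der t_r}|_{L_i}$, so their difference $d\frak m_i(X(c))$ vanishes. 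Now I invoke Proposition \ref{prop 6.8}, Proposition \ref{prop 6.13}, or Proposition \ref{prop n} according as $j_m=0$, $j_m\in\{1,\dots,n-1\}$, or $j_m=n$: each asserts precisely that an element of $T_{\mu^J(c)}\mc M(\AT)$ of the relevant block form and killed by the relevant $d\frak m_i$ must equal $A(c)\,\frac{\der\mu^J}{\der c_m}(c)$ for some scalar function $A(c)$, independent of $x$ (equation \Ref{X(c)0}).

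The main obstacle is the remaining point: that $A$ is a \emph{polynomial} in $c$. I would prove this by comparing Laurent expansions at $x=\infty$. By Lemma \ref{lem der c-m} (equivalently \Ref{univ}), $\frac{\der\mu^J}{\der c_m}(c)=-a\prod_{i=0}^n y_i(x,c,m)^{-a_{i,j_m}}h_{j_m}$ with $a$ a positive integer and the $y_i(x,c,m)$ monic in $x$; the degree-transformation formula together with the degree-increasing hypothesis forces the integer $D:=2\deg y_{j_m}-\sum_{i\ne j_m}(-a_{i,j_m})\deg y_i$ (degrees in $x$ of the polynomials of the tuple $Y^J(c)$) to satisfy $D\ge 2$, so that $\frac{\der\mu^J}{\der c_m}(c)=-a\big(x^{-D}+O(x^{-D-1})\big)h_{j_m}$ with nonzero \emph{constant} leading coefficient $-a$. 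On the other hand, each $g_\ell$, being $\ln'$ of a ratio of monic-in-$x$ polynomials whose coefficients are polynomials in $c$ (Lemma \ref{lem gen procedure}), has a Laurent expansion at $x=\infty$ in powers of $x^{-1}$ with coefficients polynomial in $c$; since $\frac{\der}{\der t_r}|_{\mu^J(c)}$ is, by Corollary \ref{cor der}, \Ref{T} and Lemma \ref{lem exp}, a differential polynomial in the $g_\ell$, and $\frak L_{\Gamma_{r,\tilde J}}\mu^J(c)=-\sum_\ell(\Gamma_{r,\tilde J}g_\ell)h_{j_\ell}$, every diagonal entry of $X(c)$ has such a Laurent expansion too. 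Comparing the coefficient of $x^{-D}$ in a diagonal entry of $X(c)$ where $h_{j_m}$ has a nonzero entry then expresses $A(c)$ as a constant multiple of a polynomial in $c$, which completes the induction step and the proof.
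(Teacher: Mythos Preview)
Your proof is correct and follows essentially the same route as the paper: the paper packages your block-shape step as Lemma \ref{lem differ}, your kernel step as Lemma \ref{lem ker 1}, and then invokes Propositions \ref{prop 6.8}--\ref{prop n} exactly as you do. The only difference is that for the polynomiality of $\ga_m$ the paper simply cites \cite[Proposition~5.9]{VW}, whereas you supply a direct argument via the Laurent expansion at $x=\infty$; your computation that the leading term of $\frac{\der\mu^J}{\der c_m}$ is $-a\,x^{-D}h_{j_m}$ with $D=\tilde k_{j_m}-k_{j_m}+1\ge 2$ and constant nonzero leading coefficient is correct, and the polynomial-in-$c$ nature of the Laurent coefficients of the $g_\ell$ (hence of $X(c)$) follows from the recursive construction in Section \ref{Degree increasing generation} (Lemma \ref{lem gen procedure} states monicity; the polynomial dependence on $c$ is implicit in formula \Ref{tilde yi}).
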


	\subsection{Proof of Proposition \ref{prop ind}}
	\label{sec proof prop ind}

\begin{lem}
\label{lem ker 1}
We have 
	\bean
		\label{formula for ker}
		d\frak m_i\Big|_{\mu^J({\tilde c},c_m)} \left(\frac{\der}{\der t_r}\Big|_{\mu^J({\tilde c},c_m)} -
\frak{L}_{ \Ga_{r,\tilde J}} \mu^J(\tilde c,c_m)\right) = 0.
		\eean
 for all $i\notin \{j_m, 2n+1-j_m\}$.

\end{lem}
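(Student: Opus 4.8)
The plan is a short functoriality (diagram chase) argument. The key observation is that for $i\notin\{j_m,2n+1-j_m\}$ the $i$-th Miura map is blind to the final generation step, so the two terms appearing in \Ref{formula for ker} push forward under $d\frak m_i$ to one and the same tangent vector of $\D$ and therefore cancel.

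First I would introduce the projection $\pi:\C^m\to\C^{m-1}$, $(\tilde c,c_m)\mapsto\tilde c$, and record, via Lemma \ref{lem j j'}, the factorization of maps $\C^m\to\D$
\[
\frak m_i\circ\mu^J \;=\; (\frak m_i\circ\mu^{\tilde J})\circ\pi \qquad (i\notin\{j_m,2n+1-j_m\}).
\]
The polynomial vector field $\Ga_{r,\tilde J}$ on $\C^{m-1}$, viewed on $\C^m$ with vanishing $\der/\der c_m$-component, is $\pi$-related to itself, so differentiating this identity along it yields
\[
d\frak m_i\big|_{\mu^J(\tilde c,c_m)}\bigl(\frak{L}_{\Ga_{r,\tilde J}}\mu^J(\tilde c,c_m)\bigr)
\;=\;
d\frak m_i\big|_{\mu^{\tilde J}(\tilde c)}\bigl(\frak{L}_{\Ga_{r,\tilde J}}\mu^{\tilde J}(\tilde c)\bigr).
\]
Then I would feed in the induction hypothesis \Ref{fromula main m-1}, $\frak{L}_{\Ga_{r,\tilde J}}\mu^{\tilde J}(\tilde c)=\frac\der{\der t_r}\big|_{\mu^{\tilde J}(\tilde c)}$, together with the fact that the derivative of a Miura map sends the $r$-th mKdV field to the $r$-th KdV field (Theorem \ref{thm mkdvtokdv}, together with the identification of the type $\AT$ and type $\At$ mKdV flows under $\mc M(\AT)\hookrightarrow\mc M(\At)$ from Section \ref{sec comp}), i.e. $d\frak m_i\bigl(\frac\der{\der t_r}\big|_{\Ll}\bigr)=\frac\der{\der t_r}\big|_{\frak m_i(\Ll)}$. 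Using Lemma \ref{lem j j'} once more to see $\frak m_i(\mu^{\tilde J}(\tilde c))=\frak m_i(\mu^J(\tilde c,c_m))$, the right-hand side above becomes $\frac\der{\der t_r}\big|_{\frak m_i(\mu^J(\tilde c,c_m))}$.

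Finally, the same compatibility of $d\frak m_i$ with the flows, applied this time directly at $\mu^J(\tilde c,c_m)$, gives $d\frak m_i\big|_{\mu^J(\tilde c,c_m)}\bigl(\frac\der{\der t_r}\big|_{\mu^J(\tilde c,c_m)}\bigr)=\frac\der{\der t_r}\big|_{\frak m_i(\mu^J(\tilde c,c_m))}$, which is the very same vector; subtracting the two computations gives \Ref{formula for ker}. I do not anticipate a genuine obstacle, since the argument is purely formal; the only points requiring care are (a) verifying that $\Ga_{r,\tilde J}$, regarded as a vector field on $\C^m$, really is $\pi$-related to the field of the same name on $\C^{m-1}$, so that the Lie derivative can be transported through the factorization above, and (b) in the case $j_m=0$, keeping in mind the identification $\frak m_{2n+1}=\frak m_0$ so that the excluded index set $\{j_m,2n+1-j_m\}$ is exactly the set of indices not covered by Lemma \ref{lem j j'}.
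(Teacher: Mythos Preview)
Your proposal is correct and follows essentially the same approach as the paper: both arguments use Lemma \ref{lem j j'} (equivalently Theorem \ref{thm gaugemiura}) to see that $\frak m_i\circ\mu^J$ factors through $\mu^{\tilde J}$ for the allowed $i$, then push forward the two terms via $d\frak m_i$ using the induction hypothesis \Ref{fromula main m-1} and Theorem \ref{thm mkdvtokdv} to identify them with the same KdV tangent vector $\frac\der{\der t_r}\big|_{\frak m_i\circ\mu^{\tilde J}(\tilde c)}$. Your formulation via the projection $\pi$ and $\pi$-relatedness is a slightly more explicit bookkeeping of the same chain-rule step the paper writes as $d\frak m_i(\frak L_{\Ga_{r,\tilde J}}\mu^J)=\frak L_{\Ga_{r,\tilde J}}(\frak m_i\circ\mu^J)$.
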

		
\begin{proof}
The proof  is the same as the proof of Lemma 5.5 in \cite{VW}.
Namely, by Theorem \ref{thm gaugemiura} we have $\frak m_i \circ \mu^J(\tilde c,c_m)
=
\frak m_i \circ \mu^{\tilde J}(\tilde c)$ for all 
$i\notin \{j_m, 2n+1-j_m\}$.
Hence,
\bean
\label{Ga inv}
d\frak m_i\big|_{\mu^J({\tilde c},c_m)} \left(
\frak{L}_{ \Ga_{r,\tilde J}} \mu^J(\tilde c,c_M)\right)
=
\frak{L}_{ \Ga_{r,\tilde J}} (\frak m_i \circ\mu^J) ({\tilde c},c_m)=
\frak{L}_{ \Ga_{r,\tilde J}} (\frak m_i \circ\mu^{\tilde J}) ({\tilde c}).
 \eean
By Theorems \ref{thm mkdvtokdv} and \ref{thm gaugemiura}, we have
\bean
\label{d/dt inv}
d\frak m_i\Big|_{\mu^J({\tilde c},c_m)} \left(\frac{\der}{\der t_r}\Big|_{\mu^J({\tilde c},c_m)} \right) =
\frac{\der}{\der t_r}\Big|_{\frak m_i \circ\mu^J({\tilde c},c_m)} =
 \frac{\der}{\der t_r}\Big|_{\frak m_i \circ\mu^{{\tilde J}}({\tilde c})}.
\eean
By the induction assumption, we have
\bean
\label{formula Ga t}
\frac{\der}{\der t_r}\Big|_{\frak m_i \circ\mu^{{\tilde J}}({\tilde c})} =
\frak{L}_{ \Ga_{r,\tilde J}} (\frak m_i \circ\mu^{\tilde J}) ({\tilde c}).
\eean
These three formulas prove the lemma.
\end{proof}
	
\begin{lem}
\label{lem differ}
The difference
$\frac {\der}{\der t_r}\big|_{\mu^J(c)} - 
\frak{L}_{ \Ga_{r,\tilde J}} \mu^J ({\tilde c},c_m)$
has the form indicated in the right-hand side of formula \Ref{k1} if 
$j_m\in\{1,\dots,n-1\}$,
has the form indicated in the right-hand side of formula \Ref{k2} if 
$j_m=0$,
has the form indicated in the right-hand side of formula \Ref{k3} if 
$j_m=n$.

\end{lem}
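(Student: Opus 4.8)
The plan is to reduce the statement to Proposition \ref{prop77} together with the inductive hypothesis \Ref{fromula main m-1}. First I would telescope through the intermediate object $\frac\der{\der t_r}\big|_{\mu^{\tilde J}(\tilde c)}$:
\bea
\frac{\der}{\der t_r}\Big|_{\mu^J(c)} - \frak{L}_{\Ga_{r,\tilde J}}\mu^J(\tilde c,c_m)
&=&
\Big(\frac{\der}{\der t_r}\Big|_{\mu^J(c)} - \frac{\der}{\der t_r}\Big|_{\mu^{\tilde J}(\tilde c)}\Big)
\\
&+&
\Big(\frac{\der}{\der t_r}\Big|_{\mu^{\tilde J}(\tilde c)} - \frak{L}_{\Ga_{r,\tilde J}}\mu^J(\tilde c,c_m)\Big).
\eea
The first summand is exactly the difference evaluated in Proposition \ref{prop77}, so it already has the form in the right-hand side of \Ref{k1} when $j_m\in\{1,\dots,n-1\}$, of \Ref{k2} when $j_m=0$, and of \Ref{k3} when $j_m=n$.

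For the second summand, I would use \Ref{oper2} to write $\mu^J(\tilde c,c_m)=\mu^{\tilde J}(\tilde c)-g_m(x,\tilde c,c_m)\,h_{j_m}$; this is legitimate because, by \Ref{g's}, the functions $g_1,\dots,g_{m-1}$ do not depend on $c_m$, so they contribute the same terms to $\mu^J$ and to $\mu^{\tilde J}$. Since $\Ga_{r,\tilde J}=\sum_{i=1}^{m-1}\ga_i(\tilde c)\frac\der{\der c_i}$ is a polynomial vector field in the variables $\tilde c=(c_1,\dots,c_{m-1})$ only and $h_{j_m}$ is a constant, the Lie derivative distributes over this decomposition, and applying the inductive identity \Ref{fromula main m-1} to cancel the $\mu^{\tilde J}$-contributions gives
\bea
\frac{\der}{\der t_r}\Big|_{\mu^{\tilde J}(\tilde c)} - \frak{L}_{\Ga_{r,\tilde J}}\mu^J(\tilde c,c_m)
&=&
\big(\frak{L}_{\Ga_{r,\tilde J}}g_m\big)(x,\tilde c,c_m)\,h_{j_m},
\eea
a scalar function times $h_{j_m}$.

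It then remains to note that a scalar multiple of $h_{j_m}$ has exactly the required shape. In the realization of $\gAA\subset\slt$ used throughout, $h_0=e_{1,1}-e_{2n+1,2n+1}$, which is of the form \Ref{k2}; for $i=1,\dots,n-1$ one has $h_i=-e_{i,i}+e_{i+1,i+1}-e_{2n+1-i,2n+1-i}+e_{2n+2-i,2n+2-i}$, which is of the form \Ref{k1} (with the two coefficient functions equal); and $h_n=2(-e_{n,n}+e_{n+2,n+2})$, which is of the form \Ref{k3} (with vanishing $e_{n+1,n+1}$-coefficient). Since a sum of two expressions of the form \Ref{k1} is again of that form, with the coefficient functions added, and likewise for \Ref{k2} and \Ref{k3}, combining the two summands yields the lemma.

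I do not anticipate a genuine obstacle here: once Proposition \ref{prop77} and the inductive hypothesis are in hand the argument is entirely bookkeeping. The only points needing a little care are the observation that $\Ga_{r,\tilde J}$ contains no $\der/\der c_m$ --- so that $\frak{L}_{\Ga_{r,\tilde J}}$ acts on the $\mu^{\tilde J}(\tilde c)$-part of $\mu^J(\tilde c,c_m)$ precisely as in \Ref{fromula main m-1} and merely differentiates the scalar $g_m$ in the remaining part --- and the trivial check that the index set $\{j_m,\,j_m+1,\,2n+1-j_m,\,2n+2-j_m\}$ (together with its degenerations at $j_m=0$ and $j_m=n$) is common to both summands, so that they may be added term by term.
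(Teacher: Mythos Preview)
Your proof is correct and follows essentially the same route as the paper: telescope through $\frac\der{\der t_r}\big|_{\mu^{\tilde J}(\tilde c)}$, invoke Proposition \ref{prop77} for the first summand, and use \Ref{oper2} together with the inductive hypothesis \Ref{fromula main m-1} to reduce the second summand to a scalar multiple of $h_{j_m}$. Your write-up is in fact slightly more explicit than the paper's in verifying that $h_{j_m}$ has the correct diagonal shape in each of the three cases.
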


\begin{proof} We have
\bea
&&
\frac {\der}{\der t_r}\Big|_{\mu^J(c)} - \frak{L}_{ \Ga_{r,\tilde J}} \mu^J ({\tilde c},c_m)
\\
&&
\phantom{aaaaaaaaaa}
=
\frac {\der}{\der t_r}\Big|_{\mu^J(c)} - \frac {\der}{\der t_r}\Big|_{\mu^{\tilde{J}}(\tilde c)} 
 + \frac {\der}{\der t_r}\Big|_{\mu^{\tilde{J}}(\tilde c)}
 - \frak{L}_{ \Ga_{r,\tilde J}} \mu^J ({\tilde c},c_m)
\\
&&
\phantom{aaaaaaaaaa}
=
\frac {\der}{\der t_r}\Big|_{\mu^J(c)} - \frac {\der}{\der t_r}\Big|_{\mu^{\tilde{J}}(\tilde c)} 
 +\frak{L}_{ \Ga_{r,\tilde J}} \mu^{\tilde J} ({\tilde c})
 - \frak{L}_{ \Ga_{r,\tilde J}} \mu^J ({\tilde c},c_m)
\\
&&
\phantom{aaaaaaaaaa}
=
\frac {\der}{\der t_r}\Big|_{\mu^J(c)} - \frac {\der}{\der t_r}\Big|_{\mu^{\tilde{J}}(\tilde c)} 
 +\frak{L}_{ \Ga_{r,\tilde J}}g_m(x,{\tilde c},c_m)\,h_{j_m},
\eea
see formula \Ref{oper2}.  If $j_m\in\{1,\dots,n-1\}$, then 
$\frac {\der}{\der t_r}\Big|_{\mu^J(c)} - \frac {\der}{\der t_r}\Big|_{\mu^{\tilde{J}}(\tilde c)} $
has the form indicated in the right-hand side of formula \Ref{k1} by Proposition \ref{prop77}
and $\frak{L}_{ \Ga_{r,\tilde J}}g_m(x,{\tilde c},c_m)\,h_{j_m}$
has that form since 
$h_{j_m} = -e_{j_m, j_m} + e_{j_m+1, j_m+1}-e_{2n+1-j_m, 2n+1-j_m}
+e_{2n+2-j_m, 2n+2-j_m}$. 
 This proves the lemma for $j_m\in\{1,\dots,n-1\}$. The other two cases of the lemma are proved similarly.
\end{proof}

Let us finish the proof of Proposition \ref{prop ind}.
By Lemmas \ref{lem ker 1} and \ref{lem differ}
the difference $\frac {\der}{\der t_r}\big|_{\mu^J(c)} -
\frak{L}_{ \Ga_{r,\tilde J}} \mu^J ({\tilde c},c_m)$
has the form indicated in the right-hand side of one of the formulas \Ref{k1}-\Ref{k3} and lies in the kernels of the differentials
of Miura maps
$\frak{ m}_i$ for all $i\notin \{j_m, 2n+1-j_m\}$.  By Propositions \ref{prop 6.8}, \ref{prop 6.13}, \ref{prop n}
 we conclude that the difference has the form 
$\ga_m(\tilde c, c_m) \frac{\der \mu^J}{\der c_m}$ for some scalar	function $\ga_m(\tilde c, c_m) $
 on  $\C^m$. Therefore, $\frac{\der}{\der t_r}\big|_{\mu^J(\tilde c, c_m)} = 
\frak{L}_{ \Ga_{r,\tilde J}} \mu^J ({\tilde c},c_m)+ 
\\
\ga_m(\tilde c, c_m) \frac{\der \mu^J}{\der c_m}(\tilde c, c_m)$.
If we set $\Ga_r = \Ga_{r,\tilde J} +
\ga_m(\tilde c, c_m) \frac{\der}{\der c_m}$, then the vector field  $\Ga_r$ will satisfy formula 
\Ref{formula main}.

We need to prove that $\ga_m(\tilde c, c_m) $ is a polynomial. The proof of that fact is the same as the proof of 
\cite[Proposition 5.9]{VW}.
Proposition \ref{prop ind} is proved.

\subsection{End of proof Theorem \ref{thm main} for $m>1$}
Proposition \ref{prop ind} implies  the first statement of Theorem \ref{thm main}. The second statement 
 says that if $r>4m$, then
$		\frac \der{\der t_r}\big|_{\mu^J(c)} =0$. But that follows from Corollary \ref{cor der} and Lemma \ref{lem exp}.

\end{document}